\newtheorem{theorem}{Theorem}[section]
\newtheorem*{theorem*}{Theorem}
\newtheorem{corollary}[theorem]{Corollary}
\newtheorem{lemma}[theorem]{Lemma}
\newtheorem{proposition}[theorem]{Proposition}
\newtheorem*{proposition*}{Proposition}
\newtheorem*{conjecture*}{Conjecture}
\newtheorem{expectation}[theorem]{Expectation}
\newtheorem{thm-dfn}[theorem]{Theorem-Definition}
\newtheorem{clm-dfn}[theorem]{Claim-Definition}
\newtheorem{claim}[theorem]{Claim}
\theoremstyle{definition}
\newtheorem{definition}[theorem]{Definition}
\newtheorem{notation}[theorem]{Notation}
\newtheorem{remark}[theorem]{Remark}
\newtheorem*{remark*}{Remark}
\numberwithin{equation}{section}
\newcommand{\quash}[1]{}  %Anything in \quash is ignored
\newcommand{\fraka}{{\mathfrak a}}
\newcommand{\frakb}{{\mathfrak b}}
\newcommand{\frakg}{{\mathfrak g}}
\newcommand{\frakh}{{\mathfrak h}}
\newcommand{\frakk}{{\mathfrak k}}
\newcommand{\frakn}{{\mathfrak n}}
\newcommand{\frakz}{{\mathfrak z}}
\newcommand{\bbC}{{\mathbb C}}
\newcommand{\bbR}{{\mathbb R}}
\newcommand{\bbZ}{{\mathbb Z}}
\newcommand{\calB}{{\mathcal B}}
\newcommand{\calC}{{\mathcal C}}
\newcommand{\calJ}{{\mathcal J}}
\newcommand{\calM}{{\mathcal M}}
\newcommand{\calO}{{\mathcal O}}
\newcommand{\calV}{{\mathcal V}}
\newcommand{\calZ}{{\mathcal Z}}
\newcommand{\temp}{\textnormal{temp}}
\newcommand{\Hom}{\textnormal{Hom}}
\begin{document}

\title[Second adjointness for tempered admissible representations]{Second adjointness for tempered admissible representations of a real group}
        \author{Alexander Yom Din}
        \address{Department of Mathematics, California Institute of Technology, Pasadena, CA 91125, USA.}
         \email{ayomdin@gmail.com}

\maketitle

\begin{abstract}
	We study second adjointness in the context of tempered admissible representations of a real reductive group. Compared to a recent result of Crisp and Higson, this generalizes from $SL_2$ to a general group, but specializes to only considering admissible representations. We also discuss Casselman's canonical pairing in this context, and the relation to Bernstein morphisms. Additionally, we take the opportunity to discuss some relevant functors and some of their relations.
\end{abstract}

\tableofcontents

%%%%%%%%%%%%%%%%%%%%%%%%%%%%%%%%%%%%
%%%%%%%%%%%%%%%%%%%%%%%%%%%%%%%%%%%%

\section{Introduction}

\subsection{Second adjointness}

Let $G$ be a connected reductive group over a local field $F$. Let $P , P^- \subset G$ be opposite parabolics defined over $F$, with Levi $L = P \cap P^-$. One has the \emph{functors of parabolic restriction and induction} w.r.t. $P$, which form an adjunction $$ pres : \calM (G(F)) \rightleftarrows \calM (L(F)) : pind$$ (meaning that $pres$ is the left adjoint of $pind$). Here $\calM (\cdot)$ is the category of smooth representations (over $\bbC$) in the case when $F$ is non-archimedean, and is the category of smooth Frechet representations of moderate growth (over $\bbC$) in the case when $F$ is archimedean. The functor $pres$ is usually also known as the \emph{Jacquet functor}. Let us denote similarly by $$ pres^- : \calM (G(F)) \to \calM (L(F))$$ the parabolic restriction where we use the parabolic $P^-$ instead of $P$.

\medskip

The functor $pind$ is exact. In the non-archimedean case, the functor $pres$ is exact as well (this is a basic result of Jacquet) and one has the fundamental \emph{second adjointness} theorem of Joseph Bernstein:

\begin{theorem*}[J. Bernstein]
	Suppose that $F$ is non-archimedean. Then there is a canonical adjunction $$ pind : \calM (L(F)) \rightleftarrows \calM (G(F)) : pres^-.$$
\end{theorem*}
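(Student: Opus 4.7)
The plan is to construct a unit $\eta : \mathrm{id} \to pres^- \circ pind$ and counit $\epsilon : pind \circ pres^- \to \mathrm{id}$, and then verify the two triangle identities. The central geometric input I would use is the Bruhat decomposition $G(F) = \bigsqcup_{w \in W_L \backslash W / W_L} P(F) w P^-(F)$: because $P$ and $P^-$ are opposite, the double coset indexed by $w = 1$ is open and dense, and multiplication exhibits $N^-(F) \cdot P(F)$ as an open subvariety of $G(F)$ isomorphic to $N^-(F) \times P(F)$.

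For the unit I would take $\eta_V : V \to pres^-(pind(V))$ to be the inclusion of the open-cell piece of the Bernstein--Zelevinsky geometric filtration of $pres^-(pind(V))$: a vector $v \in V$ is lifted to a compactly supported smooth section $f_v$ of $pind(V)$ supported on $N^-(F) \cdot P(F)$, extended by zero across the lower cells, and then projected to $N^-(F)$-coinvariants. Dually, the counit $\epsilon_W : pind(pres^-(W)) \to W$ I would construct as follows: given $\phi \in pind(pres^-(W))$, a smooth $W_{N^-(F)}$-valued function on $G(F)$ of compact support modulo $P(F)$, restrict to $N^-(F)$, locally lift to a compactly supported $\tilde\phi : N^-(F) \to W$, and form
\[
\epsilon_W(\phi) \;=\; \int_{N^-(F)} n^{-1} \tilde\phi(n)\, dn \;\in\; W.
\]

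Once the two maps are in hand, the triangle identities should reduce to a direct calculation on the open Bruhat cell, where both compositions become the identity by construction; one then has to check that the contributions of the non-open cells vanish after composition. The hard part will be showing the counit is well defined --- that the averaging integral actually lies in $W$ (rather than only in some completion) and is independent of the choice of lift $\tilde\phi$. This rests on ingredients specific to the non-archimedean setting: Jacquet's exactness theorem for $pres^-$, compactness properties of the supports of Jacquet modules modulo the center of $L(F)$, and the fact that smooth compactly supported functions on a totally disconnected group are locally constant, so the averaging integral reduces to a finite sum for which the relevant cancellations in $W_{N^-(F)}$ can be lifted to actual cancellations in $W$. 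The failure of these properties over $\bbR$ is precisely what obstructs a direct analogue of second adjointness in the archimedean setting and is what the rest of this paper must overcome by restricting to tempered admissible representations.
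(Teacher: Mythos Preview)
The paper does not actually prove this theorem --- it is quoted in the introduction as a known result of Bernstein, serving as motivation for the archimedean analogues that form the paper's subject. So there is no proof in the paper to compare against, and your sketch has to be judged on its own merits.

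Your unit $\eta$ is the standard open-cell map coming from the Bernstein--Zelevinsky filtration, and that part is fine. The counit, however, has a genuine gap, and it is not the one you flag. A section $\phi \in pind(pres^-(W))$ is a smooth function on the \emph{compact} flag variety $G(F)/P(F)$; restricting it to the open cell $N^-(F)\hookrightarrow G(F)/P(F)$ gives a locally constant function on $N^-(F)$ which is almost never compactly supported, since $N^-(F)$ is open and dense but not closed in $G(F)/P(F)$. There is therefore no compactly supported lift $\tilde\phi$ to speak of, and the integral $\int_{N^-(F)} n^{-1}\tilde\phi(n)\,dn$ does not reduce to a finite sum. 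Local constancy and Jacquet's exactness do not touch this point: the obstruction is the non-compactness of $N^-(F)$ inside $G(F)/P(F)$, not regularity or exactness. This is exactly why second adjointness is a deep theorem rather than a formal manipulation.

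Bernstein's actual argument does not try to write the counit as an integral. It passes through his \emph{stabilization} theorem: for a compact open $N_0^-\subset N^-(F)$ and a strictly dominant element $z$ in the center of $L(F)$, the Jacquet module $W_{N^-}$ is identified with the stable range of the system $W^{N_0^-}\xrightarrow{\,z\,} W^{N_0^-}\xrightarrow{\,z\,}\cdots$, and it is this description, combined with the finiteness properties of $\calM(G(F))$ furnished by the Bernstein decomposition, that yields the adjunction. A different route --- and the one closest in spirit to what the present paper carries out in the archimedean tempered case --- is via Casselman's canonical pairing $pres(W)\otimes pres^-(W^\vee)\to\bbC$: its non-degeneracy gives $pres^-(W)\cong pres(W^\vee)^\vee$, and second adjointness then follows from first adjointness and contragredient duality. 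That argument, however, as written requires admissibility, so it does not by itself cover all of $\calM(G(F))$.
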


In the archimedean case, things become more complicated - the functor $pres$ is not exact and second adjointness does not hold in its above formulation.

\medskip

\begin{center}\textbf{Let us from now on assume that $F = \bbR$.}\end{center}

\medskip

Let us consider the subcategories $\calM (\cdot)_{\temp} \subset \calM (\cdot)$ of \emph{tempered} representations (those are, morally, representations whose matrix coefficients are close to being square integrable, and thus who have a chance of contributing to the Plancherel decomposition of $L^2 (G(\bbR))$\footnote{Actually, modulo the center, as usual.}). The functor $pind$ preserves these, but $pres$ does not. Nevertheless, one still has an adjunction $$ temppres : \calM (G(\bbR))_{\temp} \rightleftarrows \calM (L(\bbR))_{\temp} : pind,$$ where $temppres (V)$ is the biggest tempered quotient of $pres (V)$. Of course, we also denote by $temppres^-$ the analogous functor where we use $P^-$ instead of $P$.

\medskip

It was relatively recently shown by T. Crisp and N. Higson:

\begin{theorem*}[\cite{CrHi}]
	Suppose that $G = SL_2$. Then there is a canonical adjunction $$ pind : \calM (L(\bbR))_{\temp} \rightleftarrows \calM (G(\bbR))_{\temp} : temppres^-.$$
\end{theorem*}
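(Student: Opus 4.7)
The plan is to exhibit the adjunction by constructing natural unit and counit transformations
\[
\eta_\sigma : \sigma \to temppres^-(pind(\sigma)), \qquad \epsilon_\pi : pind(temppres^-(\pi)) \to \pi,
\]
and verifying the two triangle identities. Since the other adjunction $(temppres, pind)$ is already in hand, it is natural to view the target as a ``Weyl-reflected'' version of it, where $w$ denotes the nontrivial Weyl element.

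The unit $\eta_\sigma$ is constructed via a Mackey / geometric-lemma analysis for the rank-one group $SL_2$. The double coset space $P^- \bs G / P$ has two elements (the closed and open $P^-$-orbits on $G/P = \bbP^1$), producing a two-step filtration
\[
0 \to w\sigma \to pres^-(pind(\sigma)) \to \sigma \to 0.
\]
When $\sigma$ is tempered on $L(\bbR) \cong \bbR^\times$, the Weyl translate $w\sigma$ is also tempered, so the whole Jacquet module is tempered and agrees with $temppres^-(pind(\sigma))$. The unit is then a canonical splitting of the quotient map; the splitting is automatic when $\sigma \not\cong w\sigma$, and is extended continuously across the reducibility locus via the long intertwining operator $M(w) : pind(\sigma) \to pind(w\sigma)$.

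For the counit $\epsilon_\pi$, one uses the explicit description of the tempered dual of $SL_2(\bbR)$. If $\pi$ is a discrete series or limit of discrete series, its asymptotics along $N^-$ vanish, so $temppres^-(\pi) = 0$ and $\epsilon_\pi = 0$. If $\pi = pind(\tau)$ is a tempered principal series, then $temppres^-(\pi) \cong \tau \oplus w\tau$ (after splitting as above), and $\epsilon_\pi$ is built out of the identity map $pind(\tau) \to \pi$ together with $M(w) : pind(w\tau) \to pind(\tau) = \pi$. This is essentially Casselman's canonical pairing, or equivalently a Bernstein-type morphism emerging from the Plancherel decomposition, which is exactly the kind of structure foreshadowed in the abstract.

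The main obstacle is analytic: in the smooth Frechet moderate-growth category, $pres^-$ is \emph{not} exact, so establishing the displayed two-step filtration and producing a continuous splitting of its quotient map requires care. Passing to the tempered quotient $temppres^-$ restores the needed exactness, since the obstructions to exactness and to splitting live in the non-tempered composition factors. With the unit and counit in place, the triangle identities reduce to a rank-one normalization computation for the composition $M(w) \circ M(w)$, which is a scalar determined by the Harish-Chandra $c$-function.
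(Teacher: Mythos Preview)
This statement is not proved in the paper; it is quoted from \cite{CrHi}. The paper's own contribution is Theorem~\ref{thm second adj temp}, which treats a general group but only \emph{admissible} tempered representations, and its proof is entirely different from your sketch: it never builds a unit or counit directly, never invokes intertwining operators, and never analyses a Bruhat filtration. Instead it identifies $temppres_I(V)$ with a specific $\fraka_{cent,I}$-weight piece of the Casselman--Jacquet module $\calJ_I^-(V)$ (Lemma~\ref{lem tempered pres J same}), so that Casselman's canonical pairing between $\calJ_I(V)$ and $\calJ_I^-(V^{\vee})$ restricts to a nondegenerate pairing $temppres_I^-(V)\otimes temppres_I(V^{\vee})\to\bbC$. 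The adjunction then drops out in one line from the resulting isomorphism $temppres_I^-(V)^{\vee}\cong temppres_I(V^{\vee})$ combined with $pind_I(W)^{\vee}\cong pind_I(W^{\vee})$. No case analysis on the tempered dual and no continuity-in-parameter argument is needed.

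Your route, aimed at the full non-admissible statement, has genuine gaps even as a sketch. The two-step filtration of $pres^-(pind(\sigma))$ is a $p$-adic geometric-lemma fact; in the archimedean smooth Fr\'echet category $pres^-$ is not exact, and you have not argued that $pres^-(pind(\sigma))$ has precisely the subquotients $\sigma$ and $w\sigma$ and nothing more. Your proposed fix (``obstructions live in non-tempered factors'') does not help here, since for tempered $\sigma$ both expected pieces are already tempered, so nothing is discarded on passing to $temppres^-$. The phrase ``extended continuously across the reducibility locus'' also lacks content: $temppres^-$ is not packaged as a continuous family of functors, so you must say what carries the topology and why the limit exists. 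Finally, the triangle identities do not reduce merely to $M(w)\circ M(w)$ being a scalar; with your choices of unit and counit that scalar must come out to be exactly $1$, and arranging the normalizations so that this holds uniformly is precisely the substantive analytic work carried out in \cite{CrHi}.
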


Let us consider the subcategories $\calM^a (\cdot) \subset \calM (\cdot)$ of admissible representations (we use terminology where those are the representations whose underlying $(\frakg , K)$-module is of finite length). The main observation of this paper is that $$ temppres : \calM^a (G(\bbR))_{\temp} \to \calM^a (L(\bbR))_{\temp}$$ is exact (Proposition \ref{temppres exact}), and the following theorem holds:

\begin{theorem*}[Theorem \ref{thm second adj temp}]
	There is a canonical adjunction $$ pind : \calM^a (L(\bbR))_{\temp} \rightleftarrows \calM^a (G(\bbR))_{\temp} : temppres^-.$$
\end{theorem*}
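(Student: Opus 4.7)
The plan is to derive the second adjointness from the given adjunction $(temppres, pind)$ by a duality argument, in analogy with Bernstein's proof of second adjointness in the non-archimedean setting. I would prepare three ingredients: (i) the smooth contragredient $V \mapsto V^\vee$, arising from the $K$-finite dual at the $(\frakg, K)$-module level together with Casselman--Wallach globalization, gives an anti-involution of $\calM^a(\cdot)_\temp$; (ii) a natural isomorphism $pind(W^\vee) \cong pind(W)^\vee$, which is standard since $pind$ on tempered representations agrees with normalized (essentially unitary) parabolic induction, and the latter commutes with the contragredient; and (iii) a duality identity between the two tempered Jacquet functors, namely $(temppres\, V)^\vee \cong temppres^-(V^\vee)$ for $V \in \calM^a(G(\bbR))_\temp$.

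Granting (i)--(iii), the desired adjunction isomorphism is obtained by composing
\begin{align*}
\Hom_G(pind(W), V) & \cong \Hom_G(V^\vee, pind(W^\vee)) \\
& \cong \Hom_L(temppres(V^\vee), W^\vee) \\
& \cong \Hom_L(W, temppres^-(V)),
\end{align*}
where the outer isomorphisms use (i), (ii), and (iii), and the middle one uses the given $(temppres, pind)$ adjunction. Naturality of the composite then yields the unit and counit, and the triangle identities follow from naturality of the constituent isomorphisms.

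The main obstacle will be establishing (iii). The tool I would bring in is Casselman's canonical pairing (discussed in the abstract of the paper), which identifies $pres^-(V^\vee)$ with the smooth dual of $pres(V)$ as $(\frakl, L\cap K)$-modules. Under this identification, the biggest tempered quotient $temppres(V)$ of $pres(V)$ dualizes to the biggest tempered \emph{subobject} of $pres^-(V^\vee)$, whereas $temppres^-(V^\vee)$ is by definition the biggest tempered \emph{quotient}. Showing that these coincide canonically is the heart of the matter, and it is here that the admissibility hypothesis and the exactness result (Proposition \ref{temppres exact}) are indispensable: I expect that, for admissible $V$, the tempered part of $pres^-(V^\vee)$ splits off canonically as a direct summand (reflecting the discrete versus continuous spectrum in the Langlands picture), so that the biggest tempered subobject and the biggest tempered quotient agree. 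Exactness of $temppres^-$ on admissible tempered representations, combined with semisimplicity of finite-length tempered admissible representations (via Harish-Chandra's classification), should be the formal ingredients making this precise. Once (iii) is in place, the remaining verifications reduce to standard compatibilities of contragredient with normalized induction.
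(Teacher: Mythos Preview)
Your chain of isomorphisms is exactly the paper's proof of Theorem~\ref{thm second adj temp}: dualize, use $pind(W)^\vee\cong pind(W^\vee)$, apply the $(temppres,pind)$ adjunction, and then invoke $temppres(V^\vee)\cong temppres^-(V)^\vee$ (your item~(iii), which is the paper's Theorem~\ref{thm Cass pair tempered}).

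The gap is in your proposed proof of~(iii). Casselman's canonical pairing in the archimedean case does \emph{not} identify $pres^-(V^\vee)$ with the dual of $pres(V)$; it pairs the Casselman--Jacquet modules $\calJ_I(V)$ and $\calJ_I^-(V^\vee)$ (Theorem~\ref{thm Cass pair}). The non-exactness of $pres$ is precisely what prevents the pairing from descending to the Jacquet modules themselves---this is the point stressed in the introduction. So your starting premise for~(iii) is false, and the subsequent ``biggest tempered subobject versus quotient'' discussion, while not unreasonable, is built on the wrong foundation. (Also, the tempered admissible category is not semisimple: already for $G_I$ a torus one has non-split self-extensions of a unitary character via a $\log$ factor, and these are tempered.)

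What the paper actually does for~(iii) is the weight argument of Lemma~\ref{lem tempered pres J same}: for tempered $V$, the projection $\calJ_I^-(V)\to\bbC_{\rho_{(I)}}\otimes pres_I(V)$ becomes an isomorphism on the generalized $\fraka_{cent,I}$-eigenspace $(\cdot)_{\langle\rho_{(I)}\rangle}$, because the temperedness constraint on exponents forces $(\frakn_{(I)}\calJ_I^-(V))_{\langle\rho_{(I)}\rangle}=0$. Thus $temppres_I(V)$ is read off directly as a summand of the Casselman--Jacquet module, and the non-degenerate pairing $\calJ_I(V)\otimes\calJ_I^-(V^\vee)\to\bbC$ restricts to the desired non-degenerate pairing between $temppres_I^-(V)$ and $temppres_I(V^\vee)$. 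This is the missing bridge between the Casselman--Jacquet module (where the pairing lives) and the tempered Jacquet quotient (where you want it).
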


\begin{remark*}
	Thus, relative to the result of \cite{CrHi}, we generalize from $SL_2$ to a general group, but specialize to only considering admissible representations. In this paper, we don't deal with non-admissible representations.
\end{remark*}

\subsection{Canonical pairing}

In the admissible case, second adjointness is easily shown to be equivalent to the existence and non-degeneracy of \emph{Casselman's canonical pairing} between Jacquet modules. In our setting, this is the following. Denote by $$ (\cdot)^{\vee} : \calM^a (G(\bbR)) \xrightarrow{\approx} \calM^a (G(\bbR))^{\textnormal{op}}$$ the functor of passing to the contragradient representation. Then Theorem \ref{thm second adj temp} above is equivalent to:

\begin{theorem*}[Theorem \ref{thm Cass pair tempered}]
	Let $V\in \calM^a (G(\bbR))_{\temp}$. Then there is a canonical non-degenerate pairing $$ temppres^- (V) \otimes temppres (V^{\vee}) \to \bbC.$$
\end{theorem*}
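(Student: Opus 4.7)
The plan is to deduce the pairing directly from Theorem \ref{thm second adj temp} by a Yoneda argument that converts second adjointness into the data of a non-degenerate pairing.

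First I would recall two standard inputs. On admissible representations $(\cdot)^\vee$ is a contravariant equivalence of categories, and it preserves the tempered subcategory. Moreover, normalized parabolic induction commutes canonically with the contragredient: there is a natural isomorphism $pind(W^\vee) \cong pind(W)^\vee$ (the symmetric $\rho$-normalization is precisely what makes this hold, so that $W^\vee$ on the right is the contragredient of $W$ viewed on $L(\bbR)$).

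Next, for $V \in \calM^a(G(\bbR))_{\temp}$ and $W \in \calM^a(L(\bbR))_{\temp}$, I would combine the tautological adjunction $temppres \dashv pind$ with the second adjointness $pind \dashv temppres^-$ of Theorem \ref{thm second adj temp}, inserting the contragredient/induction compatibility in the middle:
\begin{align*}
\Hom(W, temppres^-(V)) &\cong \Hom(pind(W), V) \\
 &\cong \Hom(V^\vee, pind(W)^\vee) \\
 &\cong \Hom(V^\vee, pind(W^\vee)) \\
 &\cong \Hom(temppres(V^\vee), W^\vee) \\
 &\cong \Hom(W, temppres(V^\vee)^\vee).
\end{align*}
By the Yoneda lemma this gives a canonical isomorphism $temppres^-(V) \cong temppres(V^\vee)^\vee$, which is exactly the datum of a canonical non-degenerate pairing $temppres^-(V) \otimes temppres(V^\vee) \to \bbC$.

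The main obstacle is not the formal manipulation but verifying naturality and canonicity — in particular that the identification $pind(W^\vee) \cong pind(W)^\vee$ is compatible with the unit/counit of the two adjunctions in the sense required to make the composite well-defined and independent of choices. Once these compatibilities are assembled the argument is essentially formal, and in fact the same chain of isomorphisms runs in reverse, recovering Theorem \ref{thm second adj temp} from the pairing; this makes precise the equivalence of the two theorems announced in the introduction.
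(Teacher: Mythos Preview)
Your formal chain of isomorphisms is correct, but the argument is circular in the context of this paper. You propose to deduce Theorem~\ref{thm Cass pair tempered} from Theorem~\ref{thm second adj temp}; however, in the paper the only proof of Theorem~\ref{thm second adj temp} (both the direct one in \S\ref{sec second adj} and the ``second take'' in \S\ref{sec second adj second take}) relies on Casselman's canonical pairing, and the direct proof in fact uses Theorem~\ref{thm Cass pair tempered} itself as one of its steps. So invoking second adjointness as an input here assumes exactly what you are trying to prove. Your final paragraph even acknowledges that the two theorems are formally equivalent via this chain --- which means neither can serve as an independent proof of the other.

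The paper's actual argument supplies the missing content: it starts from the general Casselman pairing $\calJ_I(V) \otimes \calJ_I^-(V^\vee) \to \bbC$ of Theorem~\ref{thm Cass pair} (an analytic input, taken as given), restricts to the pieces $\calJ_I(V)_{\langle \rho_{(I)}\rangle}$ and $\calJ_I^-(V^\vee)_{\langle -\rho_{(I)}\rangle}$, observes that non-degeneracy survives this restriction, and then invokes the key Lemma~\ref{lem tempered pres J same} to identify those pieces with $temppres_I^-(V)$ and $temppres_I(V^\vee)$ when $V$ is tempered. That lemma --- an $\fraka_{cent,I}$-weight argument showing the projection $\calJ_I^-(V)_{\langle \rho_{(I)}\rangle} \to temppres_I(V)$ is an isomorphism for tempered $V$ --- is where the genuine work specific to the tempered case happens, and it is entirely absent from your proposal.
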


The point of restricting attention to tempered representations in the archimedean case, from a technical perspective, is as follows. In the archimedean case, when one considers not necessarily tempered representations, Casselman's canonical pairing exists between Casselman-Jacquet modules rather than Jacquet modules (in contrast with the non-archimedean case). The non-exactness of the Jacquet functor is responsible for this pairing not passing to a pairing between Jacquet modules. However, when one restricts attention to tempered representations, the possible exponents have a conical constraint, which causes the reading of $temppres$ from the Casselman-Jacquet module to be exact, and things are again orderly.

\subsection{Relation to Bernstein morphisms}

In \cite{DeKnKrSc}, the authors construct Bernstein morphisms for real spherical varieties, as \cite{SaVe} did for non-archimedean spherical varieties, both following ideas of J. Bernstein. In a special case of the general setting, relevant for the current paper, this is an isometric embedding $$ \textnormal{Ber}_I : L^2 \left( (G(\bbR) \times G(\bbR)) / (\Delta L(\bbR) \cdot (N^- (\bbR) \times N (\bbR))) \right) \to L^2 \left( G(\bbR) \right)$$ (where $N , N^-$ are the unipotent radicals of $P,P^-$).

\medskip

In \S\ref{sec Bernstein morphisms} we will indicate how the canonical pairing for tempered admissible representations of this paper should be related to the construction of $\textnormal{Ber}_I$. The verification should be a straight-forward translation between the languages of \cite{DeKnKrSc} and the current paper, but we don't try to present details here.

\subsection{Non-tempered admissible representations}

The purpose of the second part of this paper is twofold. First, in section \S\ref{sec functors}, we would like to record some of the ideas from our Ph.D. thesis \cite{Yo1} in a bit more organized and complete way. Second, in section \S\ref{sec second adj second take}, we will use this to present the proof of Theorem \ref{thm second adj temp} in a different way, which gives another point of view, putting an emphasis on what is the right adjoint of $pind$ when one considers not necessarily tempered representations, and why it differs from $pres^-$.

\medskip

Namely, it is explained that the right adjoint of $$pind : \calM^a (L(\bbR)) \to \calM^a (G(\bbR))$$ is $$ V \mapsto \bbC_{\rho_P} \otimes \calJ_P (V)^{\frakn},$$ while the functor $pres^-$ is given by $$ V \mapsto \bbC_{\rho_P} \otimes \calJ_P (V) / \frakn^- \calJ_P (V),$$ and the former functor has an obvious map into the latter. Here $\calJ_P (V)$ is the \emph{Casselman-Jacquet} module, $\frakn , \frakn^-$ are the Lie algebras of $N , N^-$, and $\bbC_{\rho_P} \otimes -$ are some standard $\rho$-twists.

\medskip

We plan to further study this situation for non-tempered representations in the future.

\subsection{Dissatisfaction}

Throughout the paper, we use some analytical inputs, the main one being Casselman's canonical pairing. It is our hope that in the future we will be able to treat all of these inputs algebraically.

\subsection{Acknowledgments}

We would like to thank D. Kazhdan for suggesting us to prove the result of \cite{CrHi} for a general group, using our techniques. We would like to thank J. Bernstein, E. Sayag and H. Schilchtkrull for useful conversations. We would like to thank Y. Sakellaridis for useful correspondence. We would like to thank an anonymous referee for very useful exposition remarks.

\section{Setting and notations}\label{sec set and not}

\subsection{The group}

We fix the following. Let $G$ be a connected reductive algebraic group over $\bbC$, together with a real form $\sigma$ (so $G(\bbR) = G^{\sigma}$). Let $\theta$ be a Cartan involution of $(G , \sigma)$. Let $K := G^{\theta}$ be the resulting complexification of the maximal compact subgroup $K(\bbR) = G(\bbR)^{\theta}$. We denote by $\frakg , \frakg_{\bbR}$ the Lie algebras of $G, G(\bbR)$. We choose a maximal abelian subspace $\fraka \subset \frakg_{\bbR}^{\theta , -1}$. We denote by $R\subset \fraka^*$ the subset of roots. We choose a system of positive roots $R^+ \subset R$, with simple roots $\Sigma \subset R^+$. For $I \subset \Sigma$, we have the corresponding standard parabolic $G_I \cdot N_{(I)} \subset G$ (where $G_I$ is the Levi subgroup and $N_{(I)}$ is the unipotent radical), and also its opposite $G_I \cdot N_{(I)}^- \subset G$. We set $K_I := K \cap G_I = K \cap (G_I \cdot N_{(I)})$. For example, $G_{\Sigma} = G$. We use the standard Gothic notations for corresponding Lie algebras.

\medskip

Let $I \subset \Sigma$. We denote $$ R^+_I := R^+ \cap \left( \sum_{\alpha \in I} \bbZ_{\ge 0} \cdot \alpha \right), \quad R^+_{(I)} := R^+ \setminus R^+_I.$$ We denote $$ \rho_{I} = \frac{1}{2} \sum_{\alpha \in R^+_I} \alpha \in \fraka^* , \quad \rho_{(I)} = \frac{1}{2} \sum_{\alpha \in R^+_{(I)}} \alpha \in \fraka^* .$$ We denote $$\fraka_{cent , I} := \frakz_{\fraka} (\frakg_I) = \{ H \in \fraka \ | \ \alpha (H) = 0 \ \forall \alpha \in I\}  \subset \fraka.$$ Also, we denote $$ \fraka^{+,I} := \{ H \in \fraka \ | \ \alpha (H) \ge 0 \ \forall \alpha \in I\} \subset \fraka.$$ Finally, we denote by $\leq_I$ the partial order on $\fraka^*$ given by $\lambda \leq_I \mu$ if $(\mu - \lambda)(H) \ge 0$ for all $H \in \fraka^{+,I}$.

\subsection{Modules}

Let $\frakh$ be a reductive Lie algebra. We denote by $\calM (\frakh)$ the abelian category of $\frakh$-modules. By an \emph{admissible} $\frakh$-module, we understand an $\frakh$-module $V$ which is finitely generated over $U(\frakh)$ and is $Z(\frakh)$-finite. We denote by $$ \calM^a (\frakh) \subset \calM (\frakh)$$ the full subcategory of admissible modules.

\medskip

For an Harish-Chandra pair $(\frakh , L)$, we denote by $\calM (\frakh , L)$ the abelian category of $(\frakh ,L)$-modules. We say that an $(\frakh , L)$-module is admissible if it is admissible as an $\frakh$-module, and denote by $$ \calM^a (\frakh , L) \subset \calM (\frakh , L)$$ the full subcategory of admissible modules.

\medskip

For a complex reductive group $L$, we denote by $\hat{L}$ the set of isomorphism classes of irreducible algebraic representations of $L$. Given an algebraic representation $V$ of $L$, and $\alpha \in \hat{L}$, we denote by $V^{[\alpha]} \subset V$ the $\alpha$-isotypic subspace.

\medskip

Given a commutative real Lie algebra $\frakb$ and a locally-finite complex $\frakb$-module $V$, we denote by $wt_{\frakb} (V) \subset \frakb^*_{\bbC}$ the set of generalized eigenweights of $\frakb$ on $V$, and for $\lambda \in \frakb^*_{\bbC}$ we denote by $V_{\frakb , \lambda}$ the subspace of $V$ consisting of vectors with generalized eigenweight $\lambda$ with respect to $\frakb$. The following are very useful claims about admissibility.

\begin{lemma}\label{clm admissible 1}
	For $V \in \calM (\frakg , K)$, the following are equivalent:
	
	\begin{enumerate}
		\item $V$ is admissible.
		\item $V$ is finitely generated over $U(\frakg)$ and $V^{[\alpha]}$ are finite-dimensional for all $\alpha \in \hat{K}$.
		\item $V$ is $Z(\frakg)$-finite and $V^{[\alpha]}$ are finite-dimensional for all $\alpha \in \hat{K}$.
		\item $V$ has finite length.
	\end{enumerate}
\end{lemma}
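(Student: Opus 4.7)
The plan is to prove all four conditions equivalent by taking Harish-Chandra's classical finiteness theorem for Harish-Chandra modules as the main input and filling in the remaining implications with elementary arguments. Specifically, Harish-Chandra's theorem yields $(1) \Rightarrow (2)$ (finite-dimensional $K$-isotypic components) and $(1) \Rightarrow (4)$ (finite length) simultaneously, and I will also use it in the form that every irreducible $(\frakg, K)$-module is admissible.

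For $(4) \Rightarrow (1)$ and $(4) \Rightarrow (3)$: a finite-length module is obviously finitely generated, and each irreducible composition factor $W$ is cyclic, hence countable-dimensional over $\bbC$, so by Dixmier's lemma the endomorphism algebra of $W$ equals $\bbC$ and $Z(\frakg)$ acts by a scalar on $W$; finitely many factors then give $Z(\frakg)$-finiteness of $V$. For the finite-dimensionality of the isotypic components one applies Harish-Chandra's admissibility of irreducibles to each composition factor and sums.

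For $(2) \Rightarrow (1)$: pick a finite generating set, decompose each generator into its $K$-isotypic components (using local finiteness of the $K$-action), and thus assume each generator $v_i$ lies in $V^{[\alpha_i]}$, which is finite-dimensional by hypothesis. Since $Z(\frakg)$ commutes with the $K$-action, $Z(\frakg) \cdot v_i \subset V^{[\alpha_i]}$ is finite-dimensional, so each $v_i$ is killed by a finite-codimension ideal of $Z(\frakg)$; intersecting these finitely many ideals annihilates $V$.

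For $(3) \Rightarrow (1)$, which requires producing finite generation, I would first decompose $V$ as a finite direct sum along its generalized infinitesimal characters and reduce to the case of a single $\chi$. Since there are only finitely many irreducible Harish-Chandra modules with infinitesimal character $\chi$ (a consequence of Harish-Chandra's subquotient theorem), and the finite-dimensionality of each $V^{[\alpha]}$ bounds the multiplicity of each such irreducible as a subquotient (using any $\alpha$ occurring in it), $V$ has finite length and in particular is finitely generated. The main obstacle throughout is the invocation of Harish-Chandra's theorems; the rest is purely bookkeeping.
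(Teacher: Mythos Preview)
Your proposal is correct and uses the same key ingredients as the paper's proof: Harish-Chandra's finiteness theorem (for finite-dimensionality of $K$-isotypic components), Schur/Dixmier (for $Z(\frakg)$-finiteness of irreducibles), and the finiteness of irreducibles with a given infinitesimal character (to bound multiplicities and deduce finite length). The only difference is organizational: the paper runs a clean cycle $(1)\Rightarrow(2)\Rightarrow(3)\Rightarrow(4)\Rightarrow(1)$, whereas you prove several implications directly (some redundantly, e.g.\ $(1)\Rightarrow(4)$ separately). In particular your $(2)\Rightarrow(1)$ is exactly the paper's $(2)\Rightarrow(3)$, and your $(3)\Rightarrow(1)$ is the paper's $(3)\Rightarrow(4)$ followed by $(4)\Rightarrow(1)$; no genuinely different idea appears.
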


\begin{proof}
	Given in Appendix \ref{sec proofs}.
\end{proof}

\begin{lemma}\label{clm admissible 2}
	For $V \in \calM (\frakg , K_I N_{(I)})$, the following are equivalent:
	
	\begin{enumerate}
		\item $V$ is admissible.
		\item $V$ is $Z(\frakg)$-finite and\footnote{Here, and elsewhere, we use the notation $W^{\frakn^k} := \{ w \in W \ |  \frakn^k w = 0\}$.} $V^{\frakn_{(I)}}$ is an admissible $(\frakg_I , K_I)$-module.
		\item $V$ is $Z(\frakg)$-finite and $V^{\frakn_{(I)}^k}$ are admissible $(\frakg_I , K_I)$-modules for every $k \in \bbZ_{\ge 1}$.
		\item $V$ is $Z(\frakg)$-finite, $\fraka_{cent,I}$-locally finite, and every generalized eigenspace $V_{\fraka_{cent,I} , \lambda}$ is an admissible $(\frakg_I , K_I)$-module.
		\item $V$ has finite length.
	\end{enumerate}
\end{lemma}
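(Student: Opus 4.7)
The plan is to model the proof on Lemma \ref{clm admissible 1}, treating $V^{\frakn_{(I)}}$ as the analogue of the $K$-isotypic pieces: for a $(\frakg, K_I N_{(I)})$-module, admissibility should be controlled by the $\frakn_{(I)}$-invariants (a $(\frakg_I, K_I)$-module, since $[\frakg_I, \frakn_{(I)}] \subset \frakn_{(I)}$ and $K_I$ normalizes $N_{(I)}$), or equivalently by the $\fraka_{cent,I}$-weight decomposition. The implications $(5) \Rightarrow (1)$ and $(3) \Rightarrow (2)$ are immediate. I would then close the cycle via $(1) \Rightarrow (3)$, $(2) \Rightarrow (5)$, and handle $(1) \Leftrightarrow (4)$ separately using the weight structure.

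For $(1) \Rightarrow (3)$, the heart of the matter is to prove that $V^{\frakn_{(I)}}$ is admissible as a $(\frakg_I, K_I)$-module. The $Z(\frakg_I)$-finiteness is inherited from the $Z(\frakg)$-finiteness of $V$ via the Harish-Chandra homomorphism $Z(\frakg) \to Z(\frakg_I)$, over which $Z(\frakg_I)$ is module-finite. Finite generation of $V^{\frakn_{(I)}}$ over $U(\frakg_I)$ is derived from the PBW decomposition $U(\frakg) \cong U(\frakn_{(I)}^-) \otimes U(\frakg_I) \otimes U(\frakn_{(I)})$ and local nilpotence of $\frakn_{(I)}$, allowing one to push generators of $V$ down to generators of $V^{\frakn_{(I)}}$. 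Lemma \ref{clm admissible 1} then applies. The claim for general $k$ follows by induction: the successive quotients $V^{\frakn_{(I)}^k}/V^{\frakn_{(I)}^{k-1}}$ embed as $(\frakg_I, K_I)$-modules into a $(\frakg_I, K_I)$-equivariant Hom from a finite-dimensional quotient of $\frakn_{(I)}^{\otimes (k-1)}$ into $V^{\frakn_{(I)}}$, hence are admissible.

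For $(2) \Rightarrow (5)$, once $V^{\frakn_{(I)}}$ is admissible, Lemma \ref{clm admissible 1} guarantees it has finite length over $(\frakg_I, K_I)$. By local nilpotence of $\frakn_{(I)}$, every nonzero $(\frakg, K_I N_{(I)})$-submodule or quotient $W$ of $V$ satisfies $W^{\frakn_{(I)}} \neq 0$, so the functor $(-)^{\frakn_{(I)}}$ is exact and faithful on subquotients of $V$. Consequently the lattice of subquotients of $V$ injects into that of the finite-length module $V^{\frakn_{(I)}}$, forcing $V$ to have finite length. For $(1) \Leftrightarrow (4)$: since $\fraka_{cent,I}$ is central in $\frakg_I$, the generalized eigenspace decomposition under $\fraka_{cent,I}$ refines into a decomposition by $(\frakg_I, K_I)$-submodules. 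Local finiteness of $\fraka_{cent,I}$ on an admissible $V$ follows from $Z(\frakg)$-finiteness combined with the fact that the symmetric algebra of $\fraka_{cent,I}$ maps (up to the action of a suitable Weyl group and nilpotent correction) into the image of $Z(\frakg)$. Conversely, admissibility of each $V_{\fraka_{cent,I}, \lambda}$ plus $\fraka_{cent,I}$-local finiteness gives admissibility of $V^{\frakn_{(I)}}$ by decomposing along the finitely many $\fraka_{cent,I}$-weights appearing, reducing to $(2)$.

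The main obstacle is the finite generation of $V^{\frakn_{(I)}}$ over $U(\frakg_I)$ in $(1) \Rightarrow (3)$: this is essentially the key technical fact underlying the Casselman-Jacquet construction, and requires careful control over how generators of $V$ interact with the $\frakn_{(I)}$-filtration and the PBW decomposition. Everything else is either formal, a weight-theoretic calculation, or a direct appeal to Lemma \ref{clm admissible 1}.
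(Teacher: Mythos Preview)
Your overall cycle and most of the individual steps match the paper, but there is a genuine gap in your argument for $(2) \Rightarrow (5)$. You assert that the functor $(-)^{\frakn_{(I)}}$ is ``exact and faithful on subquotients of $V$,'' and then conclude that the submodule lattice of $V$ injects into that of $V^{\frakn_{(I)}}$. Conservativity is fine (any nonzero locally $\frakn_{(I)}$-nilpotent module has nonzero invariants), but exactness fails, even on subquotients of a fixed admissible $V$. For a concrete instance, take $\frakg = \mathfrak{sl}_2$, $I = \emptyset$: the dual Verma $M(0)^{\vee}$ contains the trivial module $L(0)$ as its unique simple submodule, and one checks that $(M(0)^{\vee})^{\frakn} = L(0)^{\frakn}$ is the one-dimensional weight-$0$ space, while $L(0) \subsetneq M(0)^{\vee}$. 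So two distinct submodules have identical $\frakn$-invariants, and your lattice-injection argument collapses.

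The paper avoids this by passing not through $(-)^{\frakn_{(I)}}$ but through $\fraka_{cent,I}$-weight spaces. Once one knows (from $Z(\frakg)$-finiteness and the Harish-Chandra homomorphism) that there is a finite set $S \subset (\fraka_{cent,I})^*_{\bbC}$ controlling $wt_{\fraka_{cent,I}}(W^{\frakn_{(I)}})$ for every subquotient $W$, the functor $W \mapsto \bigoplus_{\lambda \in S} W_{\fraka_{cent,I},\lambda}$ is genuinely exact (weight spaces are exact), conservative (any nonzero $W$ has $W^{\frakn_{(I)}} \neq 0$, whose weights lie in $S$), and lands in finite-length $(\frakg_I, K_I)$-modules under hypothesis $(4)$. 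This is why the paper routes through $(3) \Rightarrow (4) \Rightarrow (5)$ rather than attempting $(2) \Rightarrow (5)$ directly. A smaller point: your justification of $\fraka_{cent,I}$-local finiteness (``$S(\fraka_{cent,I})$ maps into the image of $Z(\frakg)$'') is not quite the mechanism; one only gets $\fraka_{cent,I}$-finiteness on $V^{\frakn_{(I)}}$ via the Harish-Chandra homomorphism $Z(\frakg) \to Z(\frakg_I)$, and must then propagate it to all of $V$ by induction on the $\frakn_{(I)}^k$-filtration, using the same exact sequence you cite for the admissibility induction.
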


\begin{proof}
	Given in Appendix \ref{sec proofs}.
\end{proof}

Recall also that the forgetful functor $\calM (\frakg , K_I N_{(I)}) \to \calM (\frakg , K_I)$ is fully faithful, and the essential image consists of $(\frakg , K_I)$-modules which are locally $\frakn_{(I)}$-torsion. We will therefore think of $\calM (\frakg , K_I N_{(I)})$ as a full subcategory of $\calM (\frakg , K_I)$ in what follows.

\begin{lemma}\label{clm admissible 3}
	For $V \in \calM (\frakg , K_I)$, the following are equivalent:

	\begin{enumerate}
		\item $V$ is a $(\frakg , K_I N_{(I)})$-module, and admissible as such.
		\item $V$ is $Z(\frakg)$-finite, $\fraka_{cent,I}$-locally finite, and every generalized eigenspace $V_{\fraka_{cent,I} , \lambda}$ is an admissible $(\frakg_I , K_I)$-module. In addition, there exists a finite set $S \subset (\fraka_{cent,I})^{*}_{\bbC}$ such that $$ wt_{\fraka_{cent,I}} (V) \subset S - \sum_{\alpha \in wt_{\fraka_{cent,I}} (\frakn_{(I)})} \bbZ_{\ge 0} \cdot \alpha.$$
	\end{enumerate}
\end{lemma}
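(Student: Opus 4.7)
The plan is to leverage Lemma \ref{clm admissible 2}, which already characterizes admissibility for $(\frakg, K_I N_{(I)})$-modules in terms of almost the same weight/eigenspace data. The extra content of Lemma \ref{clm admissible 3} is really a criterion, stated purely in $(\frakg, K_I)$-terms, for when a $(\frakg, K_I)$-module is actually a $(\frakg, K_I N_{(I)})$-module, i.e.\ locally $\frakn_{(I)}$-torsion. The cone condition in (2) is exactly what encodes this.

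For (1) $\Rightarrow$ (2), the $Z(\frakg)$-finiteness, $\fraka_{cent,I}$-local finiteness, and admissibility of each generalized $\fraka_{cent,I}$-eigenspace over $(\frakg_I, K_I)$ can be read off from Lemma \ref{clm admissible 2}(4). For the cone condition I would use finite generation of $V$ over $U(\frakg)$: pick finitely many generalized $\fraka_{cent,I}$-weight generators $v_1, \ldots, v_n$ with weights $\lambda_1, \ldots, \lambda_n$, and invoke the PBW decomposition $U(\frakg) = U(\frakn_{(I)}^-)\, U(\frakg_I)\, U(\frakn_{(I)})$. Since each $v_i$ is annihilated by some power of $\frakn_{(I)}$, the subspace $U(\frakn_{(I)}) v_i$ is finite dimensional and carries only finitely many $\fraka_{cent,I}$-weights; the $U(\frakg_I)$-action preserves these, and the $U(\frakn_{(I)}^-)$-action subtracts nonnegative integer combinations of weights of $\frakn_{(I)}$. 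So the finite union of these weights plays the role of $S$.

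For (2) $\Rightarrow$ (1), the heart of the matter is to deduce local $\frakn_{(I)}$-torsionness from the cone condition, after which admissibility follows again from Lemma \ref{clm admissible 2}(4). I would invoke the key geometric fact that there exists $H \in \fraka_{cent,I}$ with $\alpha(H) > 0$ for every $\alpha \in wt_{\fraka_{cent,I}}(\frakn_{(I)})$: these weights are the restrictions to $\fraka_{cent,I}$ of the roots in $R^+_{(I)}$, and these restrictions are nonzero and all lie in a common open half-space (one may take $H$ in the interior of the dual chamber of $\fraka_{cent,I}$). For a generalized $\fraka_{cent,I}$-weight vector $v$ of weight $\lambda$, any weight $\mu$ appearing in $\frakn_{(I)}^k v$ is of the form $\lambda + \alpha_1 + \ldots + \alpha_k$ with $\alpha_j \in wt_{\fraka_{cent,I}}(\frakn_{(I)})$, and the cone condition forces $\mu = s - \sum m_\beta \beta$ with $s \in S$, $m_\beta \in \bbZ_{\ge 0}$, and $\beta$ running over $wt_{\fraka_{cent,I}}(\frakn_{(I)})$. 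Comparing imaginary parts gives $\operatorname{Im}(\mu) = \operatorname{Im}(s)$ for some $s \in S$, and pairing the real-part identity with $H$ yields $\operatorname{Re}(\lambda(H)) + \sum_j \alpha_j(H) \le \max_{s \in S} s(H)$. Since each $\alpha_j(H)$ is bounded below by a positive constant, $k$ is bounded, so $\frakn_{(I)}^k v = 0$ for $k$ sufficiently large.

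The main obstacle I anticipate is the (2) $\Rightarrow$ (1) direction, specifically the construction of the separating element $H$ and the careful bookkeeping around complex weights — the relation $s - \mu = \sum m_\beta \beta$ is a \emph{real} identity, so one must split $\mu$ into real and imaginary parts before pairing with $H$. Once this is in place the weight-growth argument is mechanical, and the rest of the lemma is a direct translation through Lemma \ref{clm admissible 2}(4).
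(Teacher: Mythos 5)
Your proof is correct, and it takes essentially the same route as the paper's very terse proof, which just references Lemma~\ref{clm admissible 2} in both directions. For $(2) \Rightarrow (1)$ your separating-element argument is precisely what is needed to unpack the paper's one-line claim that ``the last condition makes it clear that $V$ is locally $\frakn_{(I)}$-torsion'': choose $H \in \fraka_{cent,I}$ with $\alpha(H) > 0$ for $\alpha \in \Sigma \setminus I$ and pair the real parts with $H$; your caution about splitting real and imaginary parts is correct and appropriate. Where you diverge mildly is in $(1) \Rightarrow (2)$. You derive the cone condition from a finite generating set of generalized $\fraka_{cent,I}$-weight vectors together with the PBW factorization $U(\frakg) = U(\frakn_{(I)}^-) U(\frakg_I) U(\frakn_{(I)})$, working ``from the generators up.'' The paper instead reuses the final ``preliminary observation'' established in the proof of Lemma~\ref{clm admissible 2}, namely $wt_{\fraka_{cent,I}}(V^{\frakn_{(I)}^{k+1}}/V^{\frakn_{(I)}^{k}}) \subset wt_{\fraka_{cent,I}}(V^{\frakn_{(I)}}) - k \cdot wt_{\fraka_{cent,I}}(\frakn_{(I)})$, which comes from the exact sequence $0 \to V^{\frakn_{(I)}^{k}} \to V^{\frakn_{(I)}^{k+1}} \to \Hom_{\bbC}(\frakn_{(I)}^{k}, V^{\frakn_{(I)}})$; that yields the cone condition directly with $S = wt_{\fraka_{cent,I}}(V^{\frakn_{(I)}})$, working ``from the top of the $\frakn_{(I)}$-torsion filtration down.'' Both derivations are valid and of comparable length; the paper's simply capitalizes on machinery already set up in the proof of the earlier lemma, whereas yours is self-contained modulo PBW and the already-proved $\fraka_{cent,I}$-local finiteness.
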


\begin{proof}
	Given in Appendix \ref{sec proofs}.
\end{proof}

\subsection{Dualities}

Recall the contragradient duality $$  (\cdot)^{\vee} : \calM^a (\frakg , K_I N_{(I)}) \xrightarrow{\approx} \calM^a (\frakg , K_I N_{(I)}^-)^{op} $$ given by $$ V^{\vee} := (V^*)^{K_I\textnormal{-finite} , \ \frakn_{(I)}^- \textnormal{-torsion}}.$$ In particular, for $I = \Sigma$, we obtain the contragradient duality $$ (\cdot)^{\vee} : \calM^a (\frakg , K) \xrightarrow{\approx} \calM^a (\frakg , K)^{op}.$$

\begin{lemma}\label{lem dual}
	The formula given for $(\cdot)^{\vee}$ indeed defines a duality as stated. One also has the description: $$ (V^*)^{K_I\textnormal{-finite} , \ \frakn_{(I)}^- \textnormal{-torsion}} = (V^*)^{K_I\textnormal{-finite} , \ \fraka_{cent,I}\textnormal{-finite}}.$$
\end{lemma}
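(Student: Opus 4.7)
The plan is to establish the alternative description and the duality together, treating the $\fraka_{cent,I}$-weight decomposition as the main structural tool and Harish-Chandra double-duality on each eigenspace as the classical input.

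For the easy inclusion $(V^*)^{K_I\textnormal{-finite},\,\fraka_{cent,I}\textnormal{-finite}} \subseteq (V^*)^{K_I\textnormal{-finite},\,\frakn_{(I)}^-\textnormal{-torsion}}$ I would use weight-cone bookkeeping. By Lemma~\ref{clm admissible 3}, $wt_{\fraka_{cent,I}}(V) \subseteq S - \bbZ_{\ge 0}[\gamma_1, \ldots, \gamma_k]$ for a finite $S$, where the $\gamma_j$ are the $\fraka_{cent,I}$-weights on $\frakn_{(I)}$; these are strictly positive on a suitable regular $\xi \in \fraka_{cent,I}$. Dualizing flips signs, so the $K_I$-finite and $\fraka_{cent,I}$-locally finite part $L \subseteq V^*$ has weights in $-S + \bbZ_{\ge 0}[\gamma_1, \ldots, \gamma_k]$, bounded below under $\xi$-pairing. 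Since elements of $\frakn_{(I)}^-$ have $\fraka_{cent,I}$-weights $-\gamma_j$, iterated application strictly decreases the $\xi$-value by at least $\min_j \gamma_j(\xi) > 0$ per step, so after finitely many steps the weight leaves the cone and the vector must vanish. This yields $L \subseteq T := (V^*)^{K_I\textnormal{-finite},\,\frakn_{(I)}^-\textnormal{-torsion}}$.

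I would next verify that $L$ is admissible as a $(\frakg, K_I N_{(I)}^-)$-module, which in combination with double-duality also establishes the duality statement. The subspace $L$ is $\frakg$-stable because $\fraka_{cent,I}$-local finiteness is preserved under the commuting $\frakg_I$-action and under the weight-shifting $\frakn_{(I)}^\pm$-actions; it is $\frakn_{(I)}^-$-torsion by the previous step, and $Z(\frakg)$-finite by inheritance from $V$. The decomposition $L = \bigoplus_\mu (V_{-\mu})^\vee$ (with $(\cdot)^\vee$ here meaning Harish-Chandra contragradient in $\calM^a(\frakg_I, K_I)$) exhibits each $\fraka_{cent,I}$-eigenspace as admissible, and the weight support $-S + \bbZ_{\ge 0}[\gamma_1, \ldots, \gamma_k]$ matches the cone required by the opposite parabolic (the $\fraka_{cent,I}$-weights on $\frakn_{(I)}^-$ being $-\gamma_j$). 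The opposite form of Lemma~\ref{clm admissible 3} then certifies admissibility of $L$, and Harish-Chandra double-duality on each eigenspace produces the canonical $L^\vee \cong V$.

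The main obstacle is the reverse inclusion $T \subseteq L$. My strategy is to show $T$ is itself an admissible $(\frakg, K_I N_{(I)}^-)$-module, since $\fraka_{cent,I}$-local finiteness would then be automatic by Lemma~\ref{clm admissible 3}. The $\frakg$-stability of $T$ is delicate because $[\frakn_{(I)}, \frakn_{(I)}^-]$ can have $\frakn_{(I)}$-components; one handles this with a PBW computation, moving $\frakn_{(I)}$-elements past $(\frakn_{(I)}^-)^N$ so that the result still lands in a suitable power of $\frakn_{(I)}^-$ acting on $f$. Admissibility of $T$ then reduces, via condition (2) of Lemma~\ref{clm admissible 2} for the opposite parabolic, to admissibility of the coinvariants $V/\frakn_{(I)}^- V$ as a $(\frakg_I, K_I)$-module. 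This is a Casselman-type claim that I would prove by induction on the finite length of $V$: for each irreducible composition factor the $\frakn_{(I)}^-$-coinvariants coincide with its top $\fraka_{cent,I}$-eigenspace, which is admissible as $(\frakg_I, K_I)$-module, and the right-exactness of $\frakn_{(I)}^-$-coinvariants propagates finite-dimensional $K_I$-isotypes through the extensions. Once $T$ is admissible, the inclusion $T \subseteq L$ is immediate.
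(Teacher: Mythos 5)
Your overall plan is correct and in the spirit the paper intends (it leaves this lemma as an exercise in applying Lemmas \ref{clm admissible 1}--\ref{clm admissible 3}). The easy inclusion $L \subseteq T$ by the weight-cone/$\xi$-pairing argument is fine, and so is the verification that $L = \bigoplus_\mu (V_{\fraka_{cent,I},-\mu})^\vee$ is admissible as a $(\frakg, K_I N_{(I)}^-)$-module with $L^\vee \cong V$ by eigenspace-wise Harish-Chandra biduality. Two steps in the reverse inclusion, however, are stated more loosely than they can bear.

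First, the $\frakg$-stability of $T$ is not a pure PBW commutation: rewriting $(\frakn_{(I)}^-)^M X$ for $X \in \frakn_{(I)}$ in the order $U(\frakn_{(I)}^-)\,U(\frakg_I)\,U(\frakn_{(I)})$ generates new $\frakn_{(I)}$-letters at each step, and the naive bookkeeping does not by itself guarantee that the $\frakn_{(I)}^-$-part reaches length $\ge N$. What makes it work is the $\fraka_{cent,I}$-grading on $U(\frakg)$: picking a regular $\xi \in \fraka_{cent,I}$ with $\epsilon := \min \gamma_j(\xi) > 0$ and $D := \max \gamma_j(\xi)$, the element $(\frakn_{(I)}^-)^M X$ has $\xi$-degree $\le -M\epsilon + D$, and since every PBW monomial $u_- g u_+$ has $\xi$-degree $\ge -(\text{length of } u_-)\cdot D$, one gets that $u_-$ has length $\ge (M\epsilon - D)/D$; taking $M$ large forces this $\ge N$. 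You should spell this out. Second, for an irreducible $V$ the $\frakn_{(I)}^-$-coinvariants are only a \emph{quotient} of the top $\fraka_{cent,I}$-eigenspace, not literally equal to it; and more to the point, you do not need composition series at all. Directly: write $V = U(\frakg) V_0$ with $V_0$ finite-dimensional, then $V = U(\frakn_{(I)}^-) U(\frakg_I) U(\frakn_{(I)}) V_0$ by PBW, and $U(\frakn_{(I)}) V_0$ is finite-dimensional because $V$ is locally $\frakn_{(I)}$-torsion; hence $V/\frakn_{(I)}^- V$ is a quotient of $U(\frakg_I)(U(\frakn_{(I)}) V_0)$, so finitely generated over $U(\frakg_I)$, and (via the opposite Harish-Chandra map $h_I^-$, which is what acts on $\frakn_{(I)}^-$-coinvariants) also $Z(\frakg_I)$-finite, hence admissible.

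Finally, note that once you have the PBW presentation $V = U(\frakn_{(I)}^-)\bigl(U(\frakg_I)(U(\frakn_{(I)}) V_0)\bigr)$ there is a substantially shorter route to $T \subseteq L$ that bypasses the admissibility-of-$T$ machinery and the delicate $\frakg$-stability of $T$ entirely: for any $N$, the quotient $V/(\frakn_{(I)}^-)^N V$ is spanned by the image of $\bigl(\sum_{m<N}(\frakn_{(I)}^-)^m\bigr)\cdot U(\frakg_I)(U(\frakn_{(I)}) V_0)$, whose $\fraka_{cent,I}$-weight support lies in the \emph{finite} set $wt_{\fraka_{cent,I}}(U(\frakn_{(I)}) V_0) - \{\text{sums of } < N \text{ of the } \gamma_j\}$. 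Any $\ell \in T$ is killed by some $(\frakn_{(I)}^-)^N$, hence factors through $V/(\frakn_{(I)}^-)^N V$ and is therefore automatically $\fraka_{cent,I}$-finite, i.e.\ $\ell \in L$. Then $T = L$ and the $\frakg$-stability and admissibility of $T$ become corollaries of the corresponding facts for $L$, which you already established cleanly.
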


\begin{proof}
	This is well-known, and not hard to establish based on all the admissibility Lemmas of this paper, so left as an exercise.
\end{proof}

\subsection{Representations}

In this paper we prefer to work with $(\frakg , K)$-modules rather than with representations. Let us briefly recall the relation.

\medskip

We denote by $\calM (G(\bbR))$ the category of smooth Frechet representation of $G(\bbR)$ which are of moderate growth. We have the functor $$(\cdot)^{[K]} : \calM (G(\bbR)) \to \calM (\frakg , K)$$ of passing to $K(\bbR)$-finite vectors, and we say that a representation $\calV \in \calM (G(\bbR))$ is \emph{admissible} if $\calV^{[K]}$ is an admissible $(\frakg , K)$-module. We denote by $$ \calM^a (G(\bbR)) \subset \calM (G(\bbR))$$ the full subcategory of admissible representations.

\medskip

The following is the basic theorem:

\begin{theorem}[Casselman-Wallach, \cite{Ca}, \cite{Wa1}, {\cite[\S 11]{Wa3}}]\label{Casselman-Wallach}
	The functor $$ (\cdot)^{[K]} : \calM^a (G(\bbR)) \to \calM^a (\frakg , K)$$ is an equivalence of categories.
\end{theorem}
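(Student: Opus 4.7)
The plan is to follow the classical Casselman--Wallach strategy, which decomposes into faithfulness, essential surjectivity, and fullness of $(\cdot)^{[K]}$.

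\emph{Faithfulness.} I would first observe that in any $\calV \in \calM^a(G(\bbR))$ the subspace $\calV^{[K]}$ is dense. This follows from integrating against a Dirac sequence in $C_c^\infty(K(\bbR))$ and decomposing under $K(\bbR)$: the resulting approximants are $K(\bbR)$-finite and converge in the Fréchet topology. Consequently any continuous $G(\bbR)$-equivariant map between objects of $\calM^a(G(\bbR))$ is determined by its restriction to the $K(\bbR)$-finite part, and $(\cdot)^{[K]}$ is faithful.

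\emph{Essential surjectivity.} Given $V \in \calM^a(\frakg, K)$, I would invoke Casselman's subrepresentation theorem to embed $V$ as a $(\frakg, K)$-submodule of the $K(\bbR)$-finite vectors of some principal series $I_{P,\sigma,\nu}$. The smooth principal series $I_{P,\sigma,\nu}^\infty$ is visibly a smooth Fréchet representation of moderate growth, so one can take the closure $\calV$ of $V$ there and check $\calV^{[K]} = V$. Alternatively, following Wallach, one can construct $\calV$ intrinsically by equipping $V$ with the family of Sobolev-type seminorms coming from the action of a fixed set of generators of $U(\frakg)$, and taking the completion; the two constructions must coincide by the uniqueness established in the next step.

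\emph{Fullness and uniqueness.} Given $\calV_1, \calV_2 \in \calM^a(G(\bbR))$ and a $(\frakg, K)$-morphism $\phi : \calV_1^{[K]} \to \calV_2^{[K]}$, I would extend $\phi$ to a continuous $G(\bbR)$-equivariant map $\calV_1 \to \calV_2$. Once $\phi$ is shown to be continuous for the subspace Fréchet topologies on $\calV_i^{[K]}$, one extends by density and checks $G(\bbR)$-equivariance on the dense subspace of $K(\bbR)$-finite vectors, where it holds by assumption. Applying this to the identity morphism between two globalizations of the same Harish-Chandra module also yields uniqueness of the globalization up to canonical isomorphism.

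\emph{Main obstacle.} The heart of the matter is the continuity statement required in the last step, i.e.\ the Casselman--Wallach \emph{automatic continuity theorem}. Its proof relies crucially on the finite-length hypothesis (needed to access Lemma~\ref{clm admissible 1}) and on reducing via the subrepresentation theorem to explicit matrix coefficient estimates on principal series. All other parts of the equivalence are formal once automatic continuity is established.
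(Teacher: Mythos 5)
The paper does not prove this theorem; it is invoked as a black box with citations to Casselman and Wallach, so there is no internal proof to compare your sketch against. Your outline is the standard Casselman--Wallach strategy and is structurally correct: density of $K$-finite vectors gives faithfulness, a closure (or intrinsic completion) construction via the subrepresentation theorem gives essential surjectivity, and the automatic continuity theorem gives fullness and uniqueness of globalization. You rightly flag automatic continuity as the locus of essentially all the analytic difficulty; like the paper, your sketch defers that to the literature, which is appropriate for a cited result but means the proposal is a roadmap rather than a proof. Two small inaccuracies worth correcting. First, a general finite-length $(\frakg,K)$-module embeds in a finite \emph{direct sum} of smooth principal series, not necessarily a single one; this does not affect the argument but should be stated. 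Second, the appeal to Lemma~\ref{clm admissible 1} is off target: that lemma merely records equivalent characterizations of admissibility and plays no role in the automatic continuity proof, which instead rests on explicit growth estimates for matrix coefficients of principal series and a bootstrap on Sobolev-type seminorms (this is the content of \cite[\S 11]{Wa3}), none of which appears in the present paper.
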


We will denote by $(\cdot)^{\infty}$ the equivalence of categories inverse to that in Theorem \ref{Casselman-Wallach}.

\section{Casselman's canonical pairing}

In this section we recall Casselman's canonical pairing, which plays a key role in second adjointness.

\subsection{Definition of the Casselman-Jacquet functor}

Recall the Casselman-Jacquet functor $$ \calJ_I : \calM^a (\frakg , K) \to \calM^a (\frakg , K_I N_{(I)})$$ given by\footnote{By $\lim_{k \in \bbZ_{\ge 1}}$ we understand the inverse limit, where the transition maps $V / (\frakn_{(I)}^-)^{k+1} V \to V / (\frakn_{(I)}^-)^k V$ are the standard projections.} $$ \calJ_I (V) := \left( \lim_{k \in \bbZ_{\ge 1}} (V / (\frakn_{(I)}^-)^k V) \right)^{K_I\textnormal{-finite} , \ \frakn_{(I)}\textnormal{-torsion}} .$$

\begin{lemma}\label{lem J is OK}
	The formula given for $\calJ_I$ indeed defines a functor as stated. One also has the description: $$ \left( \lim_{k \in \bbZ_{\ge 1}} (V / (\frakn_{(I)}^-)^k V) \right)^{K_I\textnormal{-finite} , \ \frakn_{(I)}\textnormal{-torsion}} = \left( \lim_{k \in \bbZ_{\ge 1}} (V / (\frakn_{(I)}^-)^k V) \right)^{\fraka_{cent,I}\textnormal{-finite}}.$$
\end{lemma}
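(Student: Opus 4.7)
The plan has three steps. First, I would analyze the $\fraka_{cent,I}$-structure of the inverse limit $\hat{V} := \lim_{k} V/(\frakn_{(I)}^-)^k V$. Second, I would identify the $K_I$-finite and $\frakn_{(I)}$-torsion subspace of $\hat{V}$ with the $\fraka_{cent,I}$-finite subspace, both being the algebraic sum of generalized $\fraka_{cent,I}$-weight spaces. Third, I would verify admissibility as a $(\frakg, K_I N_{(I)})$-module via Lemma \ref{clm admissible 3}.

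For the first step, the Harish-Chandra projection $Z(\frakg) \to Z(\frakg_I)$ shows that, since $V$ is $Z(\frakg)$-finite, each finite quotient $V/(\frakn_{(I)}^-)^k V$ viewed as a $\frakg_I$-module is $Z(\frakg_I)$-finite. As $\fraka_{cent,I}$ is central in $\frakg_I$, this yields $\fraka_{cent,I}$-local finiteness of each quotient. Finite generation of $V$ over $U(\frakg)$ bounds the $\fraka_{cent,I}$-weight support in a set $S - \sum_{\alpha \in wt_{\fraka_{cent,I}}(\frakn_{(I)}^-)} \bbZ_{\ge 0} \cdot \alpha$ for a finite $S$ independent of $k$. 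The crucial technical point is that for each fixed $\lambda$ the weight space $(V/(\frakn_{(I)}^-)^k V)_\lambda$ stabilizes for large $k$: since $\frakn_{(I)}^-$ shifts $\fraka_{cent,I}$-weights strictly in the direction opposite to $\frakn_{(I)}$, the weights of $(\frakn_{(I)}^-)^k V$ eventually avoid $\lambda$, so the transition maps become isomorphisms in weight $\lambda$. Hence $\hat{V} = \prod_\lambda \hat{V}_\lambda$ with each $\hat{V}_\lambda$ finite-dimensional, and the $\fraka_{cent,I}$-finite subspace of $\hat{V}$ equals $\bigoplus_\lambda \hat{V}_\lambda$.

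For the second step, the inclusion of the $\fraka_{cent,I}$-finite subspace into the $K_I$-finite and $\frakn_{(I)}$-torsion subspace is immediate: each $\hat{V}_\lambda$ is $K_I$-stable (since $K_I$ commutes with $\fraka_{cent,I}$) and finite-dimensional, so $K_I$ acts algebraically on it; and $\frakn_{(I)}$ shifts $\fraka_{cent,I}$-weights by positive roots in $R^+_{(I)}$, so starting from any $\lambda$ in the bounded weight support one exits this support after finitely many applications. For the reverse inclusion, note that the $\frakn_{(I)}$-invariants of a $K_I$-finite vector generate a finitely generated $(\frakg_I, K_I)$-submodule which is $Z(\frakg_I)$-finite; by $\frakg_I$-admissibility (Lemma \ref{clm admissible 1} applied to $\frakg_I$) this submodule has finite length and is in particular $\fraka_{cent,I}$-locally finite. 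An induction on the $\frakn_{(I)}$-torsion filtration then propagates $\fraka_{cent,I}$-local finiteness to any $K_I$-finite, $\frakn_{(I)}$-torsion vector.

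The third step is then immediate from Lemma \ref{clm admissible 3}: we have verified $\fraka_{cent,I}$-local finiteness with the correct weight-support condition, each generalized $\fraka_{cent,I}$-eigenspace is a finite-dimensional and hence admissible $(\frakg_I, K_I)$-module, and $Z(\frakg)$-finiteness is inherited from $V$. I expect the main obstacle to be the stabilization argument for $(V/(\frakn_{(I)}^-)^k V)_\lambda$ in the first step; this is the technical heart of the Casselman--Jacquet theory and is precisely what makes the functor well-defined on admissible modules.
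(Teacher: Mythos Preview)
The paper itself leaves this lemma as an exercise, pointing only to ``all the admissibility Lemmas of this paper,'' so your three-step outline is already more than the paper provides, and your overall strategy---weight analysis of the completion, identification of the two subspaces, then invocation of Lemma~\ref{clm admissible 3}---is exactly what is intended.

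There is, however, a genuine error that propagates through steps~2 and~3: the claim that each generalized weight space $\hat{V}_\lambda$ is \emph{finite-dimensional} is false as soon as $I \neq \emptyset$. For general $I$ the Levi $\frakg_I$ is noncompact, and an admissible $(\frakg_I, K_I)$-module is typically infinite-dimensional; the weight spaces $\hat{V}_\lambda$ are only admissible $(\frakg_I, K_I)$-modules. You invoke finite-dimensionality twice---in step~2 to conclude that $K_I$ acts algebraically on $\hat{V}_\lambda$, and in step~3 to conclude that $\hat{V}_\lambda$ is admissible---and both uses fail as written.

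The repair is not hard but requires a different justification. By your own stabilization argument, $\hat{V}_\lambda \cong (V/(\frakn_{(I)}^-)^k V)_\lambda$ for $k$ large, and each $V/(\frakn_{(I)}^-)^k V$ is already a $(\frakg, K_I)$-module, so $K_I$-local finiteness of $\hat{V}_\lambda$ is inherited directly; this fixes step~2. For step~3 one shows that $V/(\frakn_{(I)}^-)^k V$ is an admissible $(\frakg_I, K_I)$-module by induction on $k$: the base case is (a twist of) $pres_I^-(V)$, admissible by Lemma~\ref{lem pind pres adm}, and the successive graded pieces $(\frakn_{(I)}^-)^j V/(\frakn_{(I)}^-)^{j+1} V$ are subquotients of a finite-dimensional $\frakg_I$-module tensored with the base case, hence admissible. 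Then $\hat{V}_\lambda$, being a direct summand of an admissible module, is admissible as required by Lemma~\ref{clm admissible 3}.

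One minor point: in step~1 your weight-support bound has a sign slip. You write $S - \sum_{\alpha \in wt_{\fraka_{cent,I}}(\frakn_{(I)}^-)} \bbZ_{\ge 0}\cdot\alpha$, but the weights of $\frakn_{(I)}^-$ are negative, so this cone opens in the wrong direction; the bound you actually establish (and the one Lemma~\ref{clm admissible 3} demands) is $S - \sum_{\alpha \in wt_{\fraka_{cent,I}}(\frakn_{(I)})} \bbZ_{\ge 0}\cdot\alpha$.
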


\begin{proof}
	This is well-known, and not hard to establish based on all the admissibility Lemmas of this paper, so left as an exercise.
\end{proof}

The following is a basic fact proved by Casselman:

\begin{proposition}[Casselman]
	The functor $$ \calJ_I : \calM^a (\frakg , K) \to \calM^a (\frakg , K_I N_{(I)})$$ is exact.
\end{proposition}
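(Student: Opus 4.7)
The plan is to factor $\calJ_I$ as the composition $(\cdot)^{\fraka_{cent,I}\text{-finite}} \circ \widehat{(\cdot)}$ of $\frakn_{(I)}^-$-adic completion $V \mapsto \widehat{V} := \lim_k V/(\frakn_{(I)}^-)^k V$ and passage to $\fraka_{cent,I}$-finite vectors, using the equality in Lemma \ref{lem J is OK}. Since both operations are formally left exact, $\calJ_I$ is automatically left exact, and the entire content is right exactness: given a surjection $\pi \colon V \twoheadrightarrow W$ in $\calM^a(\frakg, K)$, I need to show that $\calJ_I(V) \to \calJ_I(W)$ is surjective. My strategy is to localize the problem to individual generalized $\fraka_{cent,I}$-weight spaces, showing that each is computed from a single finite-step quotient.

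The first technical input is that every finite-level quotient $V/(\frakn_{(I)}^-)^k V$ is admissible as a $(\frakg_I , K_I)$-module. One argues by induction on $k$: for $k = 1$ this is Casselman's classical theorem that $V / \frakn_{(I)}^- V$ is admissible as a $(\frakg_I , K_I)$-module; for larger $k$, each graded piece $(\frakn_{(I)}^-)^j V / (\frakn_{(I)}^-)^{j+1} V$ is a $(\frakg_I , K_I)$-equivariant quotient of $(\frakn_{(I)}^-)^{\otimes j} \otimes_{\bbC} V/\frakn_{(I)}^- V$ and hence admissible. Since $\fraka_{cent,I}$ lies in the center of $\frakg_I$, admissibility forces local finiteness of the $\fraka_{cent,I}$-action, so $V/(\frakn_{(I)}^-)^k V$ decomposes canonically as a finite direct sum of its generalized $\fraka_{cent,I}$-weight spaces.

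The second input is a stabilization claim: for each weight $\mu$, the transition map
$$\bigl(V/(\frakn_{(I)}^-)^{k+1} V\bigr)_{\fraka_{cent,I}, \mu} \longrightarrow \bigl(V/(\frakn_{(I)}^-)^k V\bigr)_{\fraka_{cent,I}, \mu}$$
is an isomorphism for $k$ sufficiently large (depending on $\mu$ and $V$). Surjectivity is immediate, since the full map $V/(\frakn_{(I)}^-)^{k+1}V \to V/(\frakn_{(I)}^-)^k V$ is surjective and one projects any preimage onto its $\mu$-component. Injectivity amounts to showing that $(\frakn_{(I)}^-)^k V / (\frakn_{(I)}^-)^{k+1} V$ has no $\mu$-generalized weight vector once $k$ is large. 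Its $\fraka_{cent,I}$-weights are of the form $\mu_0 - \sum_{i=1}^k \alpha_i|_{\fraka_{cent,I}}$, with $\mu_0$ ranging over the finite weight support of $V/\frakn_{(I)}^- V$ and each $\alpha_i$ a weight of $\frakn_{(I)}$; each $\alpha_i|_{\fraka_{cent,I}}$ lies in the open convex cone generated by $\{\alpha|_{\fraka_{cent,I}} : \alpha \in \Sigma \setminus I\}$, so the sum escapes any fixed $\mu$ as $k \to \infty$.

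With stabilization, $\calJ_I(V)_{\fraka_{cent,I}, \mu}$ coincides with $\bigl(V/(\frakn_{(I)}^-)^k V\bigr)_{\fraka_{cent,I}, \mu}$ for $k$ large enough. The surjection $\pi$ descends to a surjection $V/(\frakn_{(I)}^-)^k V \twoheadrightarrow W/(\frakn_{(I)}^-)^k W$, and passage to a generalized $\fraka_{cent,I}$-weight summand preserves surjectivity, so $\calJ_I(V)_{\fraka_{cent,I}, \mu} \twoheadrightarrow \calJ_I(W)_{\fraka_{cent,I}, \mu}$; summing over $\mu$ gives the result. The main obstacle I anticipate is the admissibility of the higher-order quotients $V/(\frakn_{(I)}^-)^k V$ as $(\frakg_I , K_I)$-modules, which relies on Casselman's classical comparison theorem for Jacquet modules; once that is granted, the rest is a clean weight-space bookkeeping argument driven by the positivity of the $\fraka_{cent,I}$-weights of $\frakn_{(I)}$.
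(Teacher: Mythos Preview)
The paper does not give its own proof; it simply cites \cite[\S 4.1.5]{Wa2} and remarks that the argument there (written for $I=\emptyset$) goes through unchanged. Your sketch is essentially the standard argument behind that reference, so in spirit you are aligned with the paper.

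There is, however, one genuine slip. You assert that $\frakn_{(I)}^-$-adic completion is ``formally left exact''; it is not. Each $V \mapsto V/(\frakn_{(I)}^-)^k V$ is right exact, and an inverse limit of right exact sequences need not be left exact: for an injection $V' \hookrightarrow V$, the induced map $V'/(\frakn_{(I)}^-)^k V' \to V/(\frakn_{(I)}^-)^k V$ has kernel $\bigl((\frakn_{(I)}^-)^k V \cap V'\bigr)/(\frakn_{(I)}^-)^k V'$, which has no reason to vanish a priori. The standard remedy is the Artin--Rees lemma, applicable here because $U(\frakg)$ is Noetherian and admissible $(\frakg,K)$-modules are finitely generated: there is a constant $c$ with $(\frakn_{(I)}^-)^{k} V \cap V' \subset (\frakn_{(I)}^-)^{k-c} V'$ for all $k \ge c$, and this forces injectivity after completion. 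Alternatively, one can push your stabilization argument to handle injectivity weight-by-weight, but that too ultimately amounts to an Artin--Rees-type comparison of the two filtrations on $V'$. Once this is patched, the remainder of your argument---admissibility of the finite-level quotients, stabilization of generalized $\fraka_{cent,I}$-weight spaces, and right exactness---is correct and is exactly how the cited reference proceeds.
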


\begin{proof}
	See, for example, \cite[\S 4.1.5]{Wa2} (the case $I = \emptyset$ is considered there, but the general case is completely analogous).
\end{proof}

\medskip

Analogously one has the functor $$ \calJ_I^- : \calM^a (\frakg , K) \to \calM^a (\frakg , K_I N_{(I)}^-) $$ (where one swaps the opposite parabolics).

\subsection{The canonical pairing}

\begin{theorem}[Casselman's canonical pairing]\label{thm Cass pair}
	Let $V \in \calM^a (\frakg , K)$. There exists a canonical $(\frakg , K_I)$-invariant pairing \begin{equation}\label{eq pairing} \calJ_I (V) \otimes \calJ_I^- (V^{\vee}) \to \bbC. \end{equation} Moreover, this pairing is non-degenerate; This means that the induced map $$ \calJ_I^- (V^{\vee}) \to \calJ_I (V)^{\vee}$$ is an isomorphism.
\end{theorem}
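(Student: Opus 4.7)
The plan is to construct the pairing analytically, using Casselman's asymptotic expansion of matrix coefficients; once it is defined, $(\frakg, K_I)$-invariance is automatic from the $G(\bbR)$-equivariance of matrix coefficients, and non-degeneracy reduces to Casselman's theorem that a matrix coefficient whose asymptotic expansion vanishes must itself vanish.

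Via Theorem~\ref{Casselman-Wallach} I pass to the Casselman--Wallach completions $V^\infty$ and $(V^\vee)^\infty$, which are smooth admissible Frechet representations of moderate growth. For $v \in V^\infty$ and $\phi \in (V^\vee)^\infty$ the matrix coefficient $c_{v,\phi}(g) := \phi(g \cdot v)$ is a smooth function on $G(\bbR)$, and Casselman's theorem on the asymptotic behavior of matrix coefficients produces an asymptotic expansion along the positive chamber $\exp(\fraka^{+,I})$ of the form
\begin{equation*}
c_{v,\phi}(\exp H) \;\sim\; \sum_{\mu} e^{\mu(H)} \, p_\mu(v,\phi;H)
\end{equation*}
as $\alpha(H)\to +\infty$ for each $\alpha \in R^+_{(I)}$, with exponents $\mu \in (\fraka_{cent,I})^*_\bbC$ controlled by the action of $Z(\frakg)$ and each $p_\mu$ polynomial in $H$ and bilinear in $(v,\phi)$. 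Casselman's comparison theorem identifies $\calJ_I(V)$ with the space of leading asymptotic data of $V^\infty$ along $N_{(I)}^-$-horocycles, and dually for $\calJ_I^-(V^\vee)$; the pairing (\ref{eq pairing}) is then defined by sending $\bar v \otimes \bar\phi$ to the leading coefficient of $c_{v,\phi}(\exp H)$ (the coefficient of the most tempered exponent along the chamber). Well-definedness on the inverse-limit completions amounts to the observation that replacing $v$ by an element of $(\frakn_{(I)}^-)^k V$, or $\phi$ by an element of $\frakn_{(I)}^k V^\vee$, only perturbs the expansion by terms of strictly lower order along the chamber. The $(\frakg, K_I)$-invariance is then immediate: $K_I(\bbR)$-translation and the $\frakg_I$-action commute with the asymptotic direction, while the $\frakn_{(I)}$-action on $V^\infty$ and the $\frakn_{(I)}^-$-action on $(V^\vee)^\infty$ annihilate the leading coefficient by the exponent bookkeeping.

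For non-degeneracy, the induced map $\calJ_I^-(V^\vee) \to \calJ_I(V)^\vee$ is a morphism in $\calM^a(\frakg, K_I N_{(I)}^-)$ by Lemmas~\ref{clm admissible 2}, \ref{clm admissible 3}, and \ref{lem dual}. For injectivity: if $\bar\phi$ lies in the kernel with lift $\phi \in V^\vee$, then the leading asymptotic term of $c_{v,\phi}$ vanishes for every $v \in V$; sweeping $v$ through the $G(\bbR)$-action and invoking $Z(\frakg)$-finiteness to bound the set of possible exponents forces the entire asymptotic expansion of $c_{v,\phi}$ to vanish, whence $c_{v,\phi} \equiv 0$ by Casselman's analyticity theorem, and therefore $\bar\phi = 0$. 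To upgrade injectivity to an isomorphism, I run the symmetric construction with $V^\vee$ in place of $V$ to produce the analogous injection in the other direction; taking contragredients via Lemma~\ref{lem dual} and matching the result with the original map (both are read off from the same leading-term extraction) identifies a two-sided inverse, and since all modules involved are of finite length the map must be an isomorphism.

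The main obstacle is the deep analytic input of Casselman: the identification of $\calJ_I(V)$ with the space of asymptotic leading coefficients of $V^\infty$, and the fact that a matrix coefficient whose asymptotic expansion vanishes identically is itself zero. Granting these, construction and non-degeneracy reduce to essentially formal manipulations on exponents and contragredients, in line with the paper's own remark that replacing such analytic inputs by algebraic arguments is a goal left for the future.
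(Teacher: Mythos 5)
Your construction of the pairing follows the same analytic route as the paper (asymptotic expansion of matrix coefficients along the $\fraka_{cent,I}$-chamber, passage to the Casselman--Jacquet modules), though with one imprecision worth flagging: you define the pairing by extracting ``the coefficient of the most tempered exponent,'' i.e.\ a single leading term. The paper instead observes that after restricting to $\calJ_I(V) \otimes \calJ_I^-(V^\vee)$ the expansion becomes a \emph{finite} element of $Exp^{fin}(\fraka_{cent,I})$, and defines the pairing by evaluating that whole finite expansion at $H=0$. In general the finite expansion contributes several exponents, and ``the most tempered one'' need not be unique nor respect the inverse-limit structure, so your single-term definition is not obviously well-defined; the paper's ``evaluate the finite part at $0$'' packages the whole thing cleanly. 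This is fixable, but as written it is not the same map.

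The genuine gap is in the non-degeneracy argument. First, you speak of ``$\bar\phi$ lies in the kernel with lift $\phi \in V^\vee$,'' but $\calJ_I^-(V^\vee)$ sits inside the inverse limit $\lim_k V^\vee/\frakn_{(I)}^k V^\vee$ and is a completion, not a quotient, of $V^\vee$: there is no canonical lift of an element of $\calJ_I^-(V^\vee)$ back to $V^\vee$, so the first step of your injectivity argument does not make sense. Second, the claim that vanishing of the leading term for all $v$, together with ``sweeping $v$ through the $G(\bbR)$-action'' and $Z(\frakg)$-finiteness, ``forces the entire asymptotic expansion of $c_{v,\phi}$ to vanish'' is exactly the hard content of non-degeneracy, and you give no argument for it. More fundamentally, non-degeneracy of Casselman's pairing is a deep theorem with a substantial proof (Mili\v{c}i\'c \cite{Mi} for $I=\emptyset$, Hecht--Schmid \cite{HeSc} in general); the paper does not prove it but cites these works. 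Your concluding statement that non-degeneracy ``reduces to essentially formal manipulations on exponents and contragredients'' is not correct: the formal part is the construction of the pairing, while its non-degeneracy is the non-trivial analytic/homological input, and it should be cited, not waved through.
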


\begin{remark}
	The construction of the pairing of Theorem \ref{thm Cass pair} is analytical. We hope to have an algebraic treatment in the future. In the works \cite{ChGaYo}, \cite{GaYo} some conjectural algebraic (or algebro-geometric) reformulations are given.
\end{remark}

\begin{proof}[Proof (of Theorem \ref{thm Cass pair}).]
	Let us recall the construction of the pairing, due to Casselman, and provide a reference for the proof of non-degeneracy.
	
	\medskip
	
	For $v \in V$ and $\alpha \in V^{\vee}$ one has a corresponding matrix coefficient $$ m_{v , \alpha} \in C^{\infty} (G(\bbR)).$$ It has a convergent expansion \begin{equation}\label{eq expansion} m_{v , \alpha} (e^H) = \sum_{\lambda}  e^{\lambda (H)} \cdot p_{\lambda} (H) \quad  \quad (H \in \fraka_{cent,I})\end{equation} where $\lambda$ runs over a subset of $\fraka^*_{\bbC}$ of the form $$ \textnormal{finite subset} - \sum_{\alpha \in \Sigma} \bbZ_{\ge 0} \cdot \alpha,$$ and $p_{\lambda} (H)$ are polynomials. Let us denote by $Exp (\fraka_{cent,I})$ the space of formal expressions as the sum in (\ref{eq expansion}). We have the subspace $Exp^{fin} (\fraka_{cent,I}) \subset Exp(\fraka_{cent,I})$ consisting of finite sums. Then the asymptotic expansion of matrix coefficients is a map $$ V \otimes V^{\vee} \to Exp(\fraka_{cent,I}).$$ Completely formally (by ``continuity") this extends to a map $$ \lim_{k \in \bbZ_{\ge 1}} (V / (\frakn_{(I)}^-)^k V) \otimes \lim_{k \in \bbZ_{\ge 1}} (V^{\vee} / (\frakn_{(I)})^k V^{\vee}) \to Exp(\fraka_{cent,I})$$ and then restricts to a map $$ \calJ_I (V) \otimes \calJ_I^- (V^{\vee}) \to Exp^{fin} (\fraka_{cent,I}).$$ Composing with the map $$ Exp^{fin} (\fraka_{cent,I}) \to \bbC$$ given by evaluation at $0 \in H$, one obtains the desired pairing (\ref{eq pairing}).
	
	\medskip
	
	That this pairing is non-degenerate is non-trivial, first proven by Milicic (\cite{Mi}) for $I = \emptyset$, and then by Hecht and Schmid (\cite{HeSc}) in general.
	
\end{proof}

\section{Tempered admissible modules}\label{sec second adj}

In this section we describe the functor $temppres_I$ of tempered parabolic restriction, Casselman's canonical pairing for tempered admissible modules, and second adjointness for tempered admissible modules.

\subsection{The functors $pres_I$ and $pind_I$}\label{sec functors pind pres}

\begin{definition}\
	\begin{enumerate}
		\item We define the \emph{parabolic restriction} functor $$ pres_I : \calM (\frakg , K) \to \calM (\frakg_I , K_I) $$ by $$ pres_I (V) := \bbC_{- \rho_{(I)}} \otimes V / \frakn_{(I)} V.$$
		\item We define the \emph{parabolic induction} functor $$ \calM (\frakg , K) \leftarrow \calM (\frakg_I , K_I) : pind_I$$ as the right adjoint of the functor $pres_I$.
	\end{enumerate}
\end{definition}

\begin{remark}
	It is easy to see that the right adjoint $pind_I$ exists abstractly, by an adjoint functor theorem. Alternatively, one might interpret the relation $pind_I \cong \calB_I \circ \Delta_I$ which we will prove later (Proposition \ref{prop rel B Delta pind}) as a concrete description of the functor $pind_I$, which in particular shows its existence.
\end{remark}

\begin{remark}
	We similarly denote by $$ pres_I^- : \calM (\frakg , K) \to \calM (\frakg_I , K_I)$$ the functor analogous to $pres_I$, where we use $\frakn_{(I)}^-$ instead of $\frakn_{(I)}$; Thus $$ pres_I^- (V) := \bbC_{\rho_{(I)}} \otimes V / \frakn_{(I)}^- V.$$
\end{remark}

\begin{lemma}\label{lem pind pres adm}
	The functors $pind_I , pres_I$ preserve the subcategories of admissible modules.
\end{lemma}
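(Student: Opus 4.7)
I handle the two functors separately, reducing in both cases to the admissibility criterion of Lemma \ref{clm admissible 1}.

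\emph{Parabolic induction.} Unwinding the adjunction by Frobenius reciprocity identifies $pind_I(W)$, for $W \in \calM(\frakg_I, K_I)$, with the classical parabolically-induced Harish-Chandra module
\[
pind_I(W) \;\cong\; \bigl(\Hom_{U(\frakp_{(I)})}\bigl(U(\frakg),\, \bbC_{\rho_{(I)}} \otimes W\bigr)\bigr)^{K\text{-finite}},
\]
where $\bbC_{\rho_{(I)}} \otimes W$ is regarded as a $(\frakp_{(I)}, K_I)$-module with trivial $\frakn_{(I)}$-action (any $(\frakp_{(I)}, K_I)$-morphism $V \to \bbC_{\rho_{(I)}} \otimes W$ automatically kills $\frakn_{(I)}V$ and thus factors through $pres_I(V)$). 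For admissible $W$, $Z(\frakg)$-finiteness of $pind_I(W)$ follows via the Harish-Chandra homomorphism $Z(\frakg) \to Z(\frakg_I)$ from $Z(\frakg_I)$-finiteness of $W$; the finite-dimensionality of $K$-isotypics is the classical Mackey/Iwasawa fact that, as a $K$-representation, $pind_I(W) \cong \textnormal{ind}_{K_I}^{K}(W)$, so each $\pi \in \hat{K}$ appears with multiplicity $\dim \Hom_{K_I}(\pi, W) < \infty$. Lemma \ref{clm admissible 1} then yields admissibility.

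\emph{Parabolic restriction.} Let $V \in \calM^a(\frakg, K)$; the $\rho$-twist being harmless, I reduce to showing $V/\frakn_{(I)}V$ is admissible as $(\frakg_I, K_I)$-module. $Z(\frakg_I)$-finiteness follows again via the Harish-Chandra homomorphism, since $V/\frakn_{(I)}V$ is $Z(\frakg)$-finite as a quotient of $V$ and $Z(\frakg_I)$ is finite over the image of $Z(\frakg)$. For the $K_I$-isotypic finiteness, I would invoke the Casselman-Jacquet module $\calJ_I^-(V) \in \calM^a(\frakg, K_I N_{(I)}^-)$ of Lemma \ref{lem J is OK}. By Lemma \ref{clm admissible 3}, its generalized $\fraka_{cent,I}$-eigenspaces are admissible $(\frakg_I, K_I)$-modules and the set of weights is bounded in $\leq_I$ by a finite set; consequently $\calJ_I^-(V)/\frakn_{(I)}\calJ_I^-(V)$ is supported on the finitely many $\leq_I$-maximal weights, each contributing an admissible $(\frakg_I, K_I)$-summand, and is therefore admissible. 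Finally, I invoke Casselman's classical identification (up to the $\rho$-twist)
\[
V/\frakn_{(I)}V \;\cong\; \calJ_I^-(V)/\frakn_{(I)}\calJ_I^-(V)
\]
to transfer admissibility to $V/\frakn_{(I)}V$.

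\emph{Main obstacle.} The decisive nontrivial step is Casselman's identification displayed above, equivalent to his classical theorem on the finite-length of the Jacquet module. Its proofs require either analytic input (Casselman's asymptotic expansions of matrix coefficients) or heavy algebraic machinery (Casselman-Osborne). Within this paper's framework the analytic content is already accepted through Lemma \ref{lem J is OK}, so the residual work is a direct comparison on $K_I$-isotypic components between the quotient of the $\frakn_{(I)}$-adic completion by $\frakn_{(I)}$ and the naive Jacquet quotient $V/\frakn_{(I)}V$.
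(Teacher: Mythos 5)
The two halves of your proposal fare very differently, so let me take them in turn.

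For $pind_I$ you proceed via the concrete realization of $pind_I(W)$ as a co-induced Harish-Chandra module and the Iwasawa/Mackey identification of its underlying $K$-type with $\mathrm{ind}_{K_I}^{K}(W)$. This is a legitimate route but it silently incurs a nontrivial verification --- you must check that the object you write down really is the abstract right adjoint of $pres_I$, with the correct $\rho$-shift (the paper's own concrete model, Proposition~\ref{prop rel B Delta pind}, comes later and with a $2\rho_{(I)}$-twist, so the bookkeeping is genuinely fussy). The paper avoids this entirely: it works \emph{only} with the adjunction, testing $\Hom_K(E,pind_I(W)) \cong \Hom_{\frakg_I,K_I}(pres_I(V_E),W)$ with $V_E=U(\frakg)\otimes_{U(\frakk)}E$ and using the already-established finite generation of $pres_I(V_E)$. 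If you want your approach to stand on its own, you need to actually prove the displayed identification of $pind_I(W)$; otherwise you have traded an abstract argument for an unproved concrete formula.

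For $pres_I$ there is a genuine gap, and an inversion of difficulty. The paper's argument is purely algebraic and short: it verifies the two defining conditions of admissibility directly --- $Z(\frakg_I)$-finiteness via the Harish-Chandra homomorphism, and finite generation of $V/\frakn_{(I)}V$ over $U(\frakg_I)$ from the decomposition $\frakg=\frakn_{(I)}+\frakg_I+\frakk$ applied to a $K$-stable finite generating set. Finite-dimensionality of $K_I$-isotypics is then automatic by Lemma~\ref{clm admissible 1}. You instead route everything through $\calJ_I^-(V)$ and two claims that are either unjustified or false as stated. First, the assertion that $\calJ_I^-(V)/\frakn_{(I)}\calJ_I^-(V)$ is ``supported on the finitely many $\leq_I$-maximal weights'' does not follow from the weight bound in Lemma~\ref{clm admissible 3}: $\frakn_{(I)}$ raises $\fraka_{cent,I}$-weights, and modding out by $\frakn_{(I)}\calJ_I^-(V)$ need not kill all non-extremal weight spaces (it only would if $\calJ_I^-(V)$ were generated by its extremal part over $U(\frakn_{(I)})U(\frakg_I)$, which you have not established). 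Second, the identification $V/\frakn_{(I)}V\cong\calJ_I^-(V)/\frakn_{(I)}\calJ_I^-(V)$ is precisely the kind of comparison between the Jacquet quotient and the Casselman--Jacquet module whose validity is entangled with the very finiteness you are trying to prove; the paper never asserts it, and you are invoking it (together with the admissibility of $\calJ_I^-(V)$, Proposition~3.2) as a black box, which is both heavier than needed and potentially circular, since the standard proofs of finiteness for the Casselman--Jacquet module typically rest on the elementary facts about $V/\frakn_{(I)}V$ that the paper proves here directly. In short: the intended lemma has a three-line algebraic proof, and you should not reach for analytic machinery; your attempt to do so also introduces a concrete logical gap in the weight argument.
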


\begin{proof}
	Given in Appendix \ref{sec proofs}.
\end{proof}

\subsection{Definition of tempered admissible modules}

\begin{definition}
	A module $V \in \calM^a (\frakg , K)$ is called \emph{tempered}, if all $\lambda \in wt_{\fraka} (pres_{\emptyset} (V))$ satisfy\footnote{By $\Re (\cdot)$ we denote the real part of a complex-valued functional on a real vector space.} $ \Re (\lambda) \ge_{\Sigma} 0$. We denote by $$ \calM^a (\frakg , K)_{\temp} \subset \calM^a (\frakg , K)$$ the full subcategory consisting of tempered modules.
\end{definition}

\begin{remark}
	By considering the symmetry given by $\dot{w}_0 \in K$, one can reformulate the above definition as: $V$ is tempered if all $\lambda \in wt_{\fraka} (pres_{\emptyset}^- (V))$ satisfy $ \Re (\lambda) \leq_{\Sigma} 0$.
\end{remark}

\begin{remark}
	It is known that under the Casselman-Wallach equivalence (Theorem \ref{Casselman-Wallach}), tempered modules as defined above match with tempered representations in the usual sense (in particular, as used in \cite{CrHi}). See, for example, \cite{Ba} and in particular \cite[Lemma 4.4]{Ba}, or \cite{Yo2} and in particular \cite[Lemma 5.4.2]{Yo2}, where the relation of the above definition of temperedness with decay of matrix coefficients is discussed.
\end{remark}

\begin{remark}\label{rem tempered then central}
	Let $W \in \calM^a (\frakg_I , K_I)$ be tempered. Then, in particular, all $\omega \in wt_{\fraka_{cent,I}} (W)$ satisfy: $$ \Re (\omega) = 0.$$
\end{remark}

\subsection{Parabolic induction and restriction in the tempered case}

\begin{remark}
	The parabolic induction of a tempered module is tempered (see, for example, \cite[Corollary 5.5.2]{Yo2}), and the contragradient of a tempered module is tempered (see, for example, \cite[Corollary 6.1.6]{Yo2}). However, the parabolic restriction of a tempered module is not necessarily tempered.
\end{remark}

In view of the last Remark, let us define:

\begin{definition}
	We define $$ temppres_I : \calM^a (\frakg , K)_{\temp} \to \calM^a (\frakg_I , K_I)_{\temp}$$ to be the left adjoint of $$ \calM^a (\frakg , K)_{\temp} \xleftarrow{} \calM^a (\frakg_I , K_I)_{\temp} : pind_I. $$
\end{definition}

\begin{remark}
	Of course, we similarly define $$ temppres_I^- : \calM^a (\frakg , K)_{\temp} \to \calM^a (\frakg_I , K_I)_{\temp},$$ using the opposite parabolic.
\end{remark}

Let us now describe $temppres_I$ more concretely (and thus, in particular, deduce its existence). We denote by $$ \calM^a (\frakg_I , K_I)_{\Sigma\textnormal{-}\temp} \subset \calM^a (\frakg_I , K_I)$$ the full subcategory consisting of modules $W$ for which one has $\Re (\lambda) \leq_{\Sigma} 0$ for all $\lambda \in wt_{\fraka} (pres_{\emptyset}^- (W))$ (notice, in contrast, that the condition for the $(\frakg_I , K_I)$-module $W$ to be tempered is $\Re (\lambda) \leq_{I} 0$ for all $\lambda \in wt_{\fraka} (pres_{\emptyset}^- (W))$). We then have $$ \calM^a (\frakg_I , K_I)_{\temp} \subset \calM^a (\frakg_I , K_I)_{\Sigma\textnormal{-}\temp}$$ (because $\leq_{I}$ is a finer partial order than $\leq_{\Sigma}$) and also $$ pres_I (\calM^a (\frakg , K)_{\temp}) \subset \calM^a (\frakg_I , K_I)_{\Sigma\textnormal{-}\temp}$$ (by the transitivity of parabolic restriction).

\begin{notation}\label{notation}
	In what follows it will be convenient, given $W$ which lies in $\calM^a (\frakg_I , K_I)$ or in $\calM^a (\frakg , K_I N_{(I)})$ and given $\lambda \in \fraka^*$, to denote $$ W_{\langle \lambda \rangle} := \bigoplus_{\omega \in (\fraka_{cent,I})^*_{\bbC} \textnormal{ s.t. } \Re (\omega) = \lambda|_{\fraka_{cent,I}}} W_{\fraka_{cent,I} , \omega}$$ (this is a $(\frakg_I , K_I)$-module).
\end{notation}

\begin{lemma}
	A module $W \in \calM^a (\frakg_I , K_I)_{\Sigma\textnormal{-}\temp}$ lies in $\calM^a (\frakg_I , K_I)_{\temp}$ if and only if $W = W_{\langle 0 \rangle}$.
\end{lemma}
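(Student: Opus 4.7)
The forward direction is immediate from Remark~\ref{rem tempered then central}: if $W$ is tempered then every $\omega \in wt_{\fraka_{cent,I}}(W)$ satisfies $\Re(\omega) = 0$, which is exactly the condition $W = W_{\langle 0 \rangle}$. So I focus on the converse, assuming $W$ is $\Sigma$-tempered and $W = W_{\langle 0 \rangle}$, and aiming to show that for every $\lambda \in wt_{\fraka}(pres_\emptyset^-(W))$ one has $\Re(\lambda) \leq_I 0$. The plan is to combine the bound $\Re(\lambda) \leq_\Sigma 0$, which is given for free, with the vanishing of $\Re(\lambda)$ on $\fraka_{cent,I}$, which will come from the hypothesis $W = W_{\langle 0 \rangle}$.

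First I would pin down $\lambda|_{\fraka_{cent,I}}$. Since $\frakn_\emptyset^- \cap \frakg_I$ has $\fraka$-weights in $-R^+_I$, all of which vanish on $\fraka_{cent,I}$, and since the $\rho_I$-shift built into $pres_\emptyset^-$ on the category $\calM(\frakg_I, K_I)$ also vanishes on $\fraka_{cent,I}$ (as $\rho_I = \frac{1}{2}\sum_{\alpha \in R^+_I} \alpha$ is supported on $I$), the weight $\lambda|_{\fraka_{cent,I}}$ coincides with some $\fraka_{cent,I}$-weight of $W$ itself. The assumption $W = W_{\langle 0 \rangle}$ then forces $\Re(\lambda)|_{\fraka_{cent,I}} = 0$.

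The remaining step is the elementary cone identity $\fraka^{+,I} = \fraka^{+,\Sigma} + \fraka_{cent,I}$, which is transparent in the dual basis $\{H_\alpha\}_{\alpha \in \Sigma}$ of $\fraka$ characterized by $\alpha(H_\beta) = \delta_{\alpha\beta}$: one has $\fraka^{+,\Sigma} = \sum_{\alpha \in \Sigma} \bbR_{\geq 0} H_\alpha$, $\fraka_{cent,I} = \sum_{\alpha \in \Sigma \setminus I} \bbR H_\alpha$, and $\fraka^{+,I} = \sum_{\alpha \in I} \bbR_{\geq 0} H_\alpha + \sum_{\alpha \in \Sigma \setminus I} \bbR H_\alpha$. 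Given any $H \in \fraka^{+,I}$, I would decompose $H = H_1 + H_2$ with $H_1 \in \fraka^{+,\Sigma}$ and $H_2 \in \fraka_{cent,I}$, and then compute
\[ \Re(\lambda)(H) = \Re(\lambda)(H_1) + \Re(\lambda)(H_2) \leq 0 + 0 = 0, \]
where the first summand is nonpositive by the $\Sigma$-tempered hypothesis and the second vanishes by the preceding paragraph. This gives $\Re(\lambda) \leq_I 0$, completing the proof. The only mildly delicate issue to be careful about is the $\rho$-shift convention used in $pres_\emptyset^-$ on $(\frakg_I, K_I)$-modules, but since $R^+_I$ annihilates $\fraka_{cent,I}$ this is benign.
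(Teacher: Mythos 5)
Your proof is correct, and the mathematical core (the cone decomposition $\fraka^{+,I} = \fraka^{+,\Sigma} + \fraka_{cent,I}$ used to pass from $\leq_\Sigma$ plus vanishing on $\fraka_{cent,I}$ to $\leq_I$) is the same as the paper's. The difference is in how you organize the weight comparison between $W$ and $pres_\emptyset^-(W)$. The paper invokes Casselman's submodule theorem once to get the \emph{equality} $wt_{\fraka_{cent,I}}(W) = wt_{\fraka}(pres^-_\emptyset(W))|_{\fraka_{cent,I}}$, and then deduces both directions of the biconditional from the single statement ``for $\lambda \leq_\Sigma 0$, one has $\lambda \leq_I 0$ iff $\lambda|_{\fraka_{cent,I}} = 0$.'' You instead split the two directions: for ``tempered $\Rightarrow W = W_{\langle 0 \rangle}$'' you cite Remark~\ref{rem tempered then central} as a black box, and for the converse you only use the easy containment $wt_{\fraka}(pres^-_\emptyset(W))|_{\fraka_{cent,I}} \subset wt_{\fraka_{cent,I}}(W)$ (which follows, as you observe, because the $\rho_I$-shift and the roots in $R^+_I$ both vanish on $\fraka_{cent,I}$, and passing to a quotient does not create new $\fraka_{cent,I}$-weights). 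This buys you a converse direction that avoids the submodule theorem entirely, at the cost of outsourcing the forward direction to the Remark, whose proof itself relies on comparable input. Either organization is fine; the paper's is more self-contained in that it proves both directions from one weight-set identity, while yours makes explicit that the cheap containment suffices for the implication actually used downstream.
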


\begin{proof}
	Notice that, in view of Casselman's submodule theorem, one has $$ wt_{\fraka_{cent,I}} (W) = wt_{\fraka} (pres^-_{\emptyset} (W))|_{\fraka_{cent,I}}.$$ Therefore, we need to check that for $\lambda \in wt_{\fraka} (pres^-_{\emptyset} (W))$, one has $\Re (\lambda|_{\fraka_{cent,I}}) = 0$ if and only if $\lambda \leq_{I} 0$. So, we are reduced to chekcing that for $\lambda \in \fraka^*$ satisfying $\lambda \leq_{\Sigma} 0$, one has $\lambda \leq_I 0$ if and only if $\lambda|_{\fraka_{cent,I}} = 0$. This is clear in view of the equality $\fraka^{+,I} = \fraka^{+,\Sigma} + \fraka_{cent,I}$.
\end{proof}

From the last Lemma we see that the functor $$(\cdot)_{\langle 0 \rangle} : \calM^a (\frakg_I , K_I)_{\Sigma\textnormal{-}\temp} \to \calM^a (\frakg_I , K_I)_{\temp}$$ is both the right and the left adjoint of the inclusion $$ \calM^a (\frakg_I , K_I)_{\temp} \subset \calM^a (\frakg_I , K_I)_{\Sigma\textnormal{-}\temp}.$$ We thus conclude:

\begin{claim}
	One has  $$ temppres_I = (\cdot)_{\langle 0 \rangle}\circ pres_I : \calM^a (\frakg , K)_{\temp} \to \calM^a (\frakg_I , K_I)_{\temp}.$$
\end{claim}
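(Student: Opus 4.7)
The plan is to identify $(\cdot)_{\langle 0 \rangle} \circ pres_I$ as a left adjoint to $pind_I : \calM^a(\frakg_I, K_I)_{\temp} \to \calM^a(\frakg, K)_{\temp}$ and then conclude by uniqueness of adjoints. All of the substantive work has already been done: the preceding lemma and the paragraph after it identify $(\cdot)_{\langle 0 \rangle}$ as the left adjoint of the inclusion $\iota : \calM^a(\frakg_I, K_I)_{\temp} \hookrightarrow \calM^a(\frakg_I, K_I)_{\Sigma\textnormal{-}\temp}$, while the discussion preceding Notation \ref{notation} establishes that $pres_I$ sends tempered modules into $\Sigma$-tempered ones and that $pind_I$ preserves temperedness.

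The core computation is then just a chain of natural isomorphisms for $V \in \calM^a(\frakg, K)_{\temp}$ and $W \in \calM^a(\frakg_I, K_I)_{\temp}$:
\begin{align*}
\Hom_{\calM^a(\frakg_I,K_I)_{\temp}}\bigl((pres_I V)_{\langle 0 \rangle}, W\bigr)
&\cong \Hom_{\calM^a(\frakg_I,K_I)_{\Sigma\textnormal{-}\temp}}\bigl(pres_I V, \iota W\bigr) \\
&\cong \Hom_{\calM^a(\frakg_I,K_I)}\bigl(pres_I V, \iota W\bigr) \\
&\cong \Hom_{\calM^a(\frakg,K)}\bigl(V, pind_I(\iota W)\bigr) \\
&\cong \Hom_{\calM^a(\frakg,K)_{\temp}}\bigl(V, pind_I W\bigr),
\end{align*}
using in order the $((\cdot)_{\langle 0 \rangle}, \iota)$-adjunction, full faithfulness of the $\Sigma$-tempered subcategory inside $\calM^a(\frakg_I, K_I)$, the defining $(pres_I, pind_I)$-adjunction, and full faithfulness of the tempered subcategory inside $\calM^a(\frakg, K)$. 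By uniqueness of left adjoints, this forces $(\cdot)_{\langle 0 \rangle} \circ pres_I \cong temppres_I$.

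There is no serious obstacle here; the only thing to verify is the naturality of this identification, which is automatic in a composition of adjunctions. If one prefers a direct argument avoiding adjoint nonsense, one can instead observe that any $(\frakg_I, K_I)$-morphism from $pres_I(V)$ to a tempered module $W$ must vanish on every generalized $\fraka_{cent,I}$-weight space of $pres_I(V)$ whose real part is nonzero, since such weights do not appear in $W = W_{\langle 0 \rangle}$; thus the morphism factors uniquely through the direct summand $(pres_I V)_{\langle 0 \rangle}$, exhibiting the universal property of $temppres_I(V)$ by hand.
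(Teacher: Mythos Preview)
Your proof is correct and is essentially the same as the paper's: the paper simply writes ``We thus conclude'' after observing that $(\cdot)_{\langle 0 \rangle}$ is left adjoint to the inclusion $\calM^a(\frakg_I,K_I)_{\temp} \subset \calM^a(\frakg_I,K_I)_{\Sigma\textnormal{-}\temp}$, leaving the composition-of-adjoints argument implicit. You have spelled out exactly that argument in full, with the chain of $\Hom$-isomorphisms making explicit what the paper takes for granted.
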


\subsection{Exactness}

The following lemma has a simple proof, but it is key.

\begin{lemma}\label{lem tempered pres J same}
	Let $V \in \calM^a (\frakg , K)_{\temp}$. The projection map $$ \calJ_I^- (V) \to \bbC_{\rho_{(I)}} \otimes pres_I (V)$$ induces an isomorphism $$ \calJ_I^- (V)_{\langle \rho_{(I)} \rangle} \to pres_I (V)_{\langle 0 \rangle} = temppres_I (V).$$
\end{lemma}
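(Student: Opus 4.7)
The plan is to analyze the $\fraka_{cent,I}$-weights appearing along the $\frakn_{(I)}$-adic filtration
$$ V \supset \frakn_{(I)} V \supset (\frakn_{(I)})^2 V \supset \cdots, $$
and to show that, under the temperedness assumption, the condition $\Re(\omega) = \rho_{(I)}|_{\fraka_{cent,I}}$ is attained only on the bottom layer $V/\frakn_{(I)} V$. Since the projection $\calJ_I^-(V) \to \bbC_{\rho_{(I)}} \otimes pres_I(V) = V/\frakn_{(I)} V$ is $\fraka_{cent,I}$-equivariant (and thus preserves the $\langle \rho_{(I)} \rangle$-pieces), passing to the inverse limit will then give the stated isomorphism.

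First, I would note that for $j \geq 1$ the subquotient $(\frakn_{(I)})^j V / (\frakn_{(I)})^{j+1} V$ is a quotient of $\frakn_{(I)}^{\otimes j} \otimes (V/\frakn_{(I)} V)$ as $\fraka_{cent,I}$-modules (via the multiplication map, which factors because $\frakn_{(I)} \cdot V \subset \frakn_{(I)} V$). Hence its $\fraka_{cent,I}$-weights are of the form $\beta_1 + \cdots + \beta_j + \omega_0$, with $\beta_i = \alpha_i|_{\fraka_{cent,I}}$ for some $\alpha_i \in R^+_{(I)}$ and $\omega_0 \in wt_{\fraka_{cent,I}}(V/\frakn_{(I)} V)$.

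The key step is the weight bound on the bottom layer: for $V$ tempered I claim that every $\omega_0 \in wt_{\fraka_{cent,I}}(V/\frakn_{(I)} V)$ satisfies $\Re(\omega_0 - \rho_{(I)}|_{\fraka_{cent,I}}) \geq 0$ on $\fraka^{+,\Sigma} \cap \fraka_{cent,I}$. This I would deduce as follows: by temperedness all $\lambda \in wt_\fraka(pres_\emptyset (V))$ satisfy $\Re(\lambda) \geq_\Sigma 0$; by transitivity of parabolic restriction, $pres_\emptyset(V) = pres_\emptyset^{\frakg_I}(pres_I(V))$; and by Nakayama (i.e.\ non-vanishing of Jacquet modules of nonzero admissible $(\frakg_I,K_I)$-modules, applied to each $\fraka_{cent,I}$-generalized weight space of $pres_I(V)$, which is a $\frakg_I$-submodule since $\fraka_{cent,I}$ is central in $\frakg_I$), every $\fraka_{cent,I}$-weight of $pres_I(V)$ is the restriction of some $\fraka$-weight of $pres_\emptyset(V)$. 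The claim follows, since the $\rho_{(I)}$-shift identifies $V/\frakn_{(I)} V$ with $\bbC_{\rho_{(I)}} \otimes pres_I(V)$ and $\rho_\emptyset^{\frakg_I}|_{\fraka_{cent,I}} = 0$.

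Combining, for $j \geq 1$ one has
$$ \Re(\omega - \rho_{(I)}|_{\fraka_{cent,I}}) = \sum_{i=1}^j \Re(\beta_i) + \Re(\omega_0 - \rho_{(I)}|_{\fraka_{cent,I}}), $$
a sum of elements of the cone $C := \{\lambda \in (\fraka_{cent,I})^*_\bbR : \lambda|_{\fraka^{+,\Sigma} \cap \fraka_{cent,I}} \geq 0\}$, in which each $\Re(\beta_i)$ is nonzero (since $\alpha_i \in R^+_{(I)}$ does not vanish on $\fraka_{cent,I}$). The cone $C$ is proper because $\fraka^{+,\Sigma} \cap \fraka_{cent,I}$ has nonempty interior in $\fraka_{cent,I}$, so a sum of cone elements can vanish only if each summand vanishes; hence $\Re(\omega) \neq \rho_{(I)}|_{\fraka_{cent,I}}$. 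Thus the $\langle \rho_{(I)} \rangle$-piece of $(\frakn_{(I)})^j V/(\frakn_{(I)})^{j+1} V$ is zero for all $j \geq 1$, each transition map $V/(\frakn_{(I)})^{k+1} V \to V/(\frakn_{(I)})^k V$ is an isomorphism on $\langle \rho_{(I)} \rangle$-pieces, and passing to the inverse limit (noting that a generalized $\omega$-weight space with fixed $\omega$ is automatically contained in $\calJ_I^-(V)$, since it lies inside an $\fraka_{cent,I}$-locally finite subspace) gives $\calJ_I^-(V)_{\langle \rho_{(I)} \rangle} = (V/\frakn_{(I)} V)_{\langle \rho_{(I)} \rangle} = pres_I(V)_{\langle 0 \rangle}$ as desired. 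The main obstacle is the weight bound of the key step — translating the temperedness condition from $\fraka$-weights of $pres_\emptyset(V)$ to $\fraka_{cent,I}$-weights of $pres_I(V)$ — after which the strictness comes cleanly from the convex geometry of $\fraka^{+,\Sigma} \cap \fraka_{cent,I}$.
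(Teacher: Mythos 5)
Your proof is correct and follows essentially the same strategy as the paper: use temperedness to show that the real part $\rho_{(I)}|_{\fraka_{cent,I}}$ can only occur as an $\fraka_{cent,I}$-weight on the bottom layer $V/\frakn_{(I)}V$, because all other contributions are shifted by a nonzero nonnegative combination of restrictions of roots in $R^+_{(I)}$, which cannot cancel by the convex-cone geometry. The paper phrases this as $(\frakn_{(I)}\calJ_I^-(V))_{\langle\rho_{(I)}\rangle}=0$ and leaves the surjectivity and the cone argument implicit, whereas you work with the graded pieces $(\frakn_{(I)})^jV/(\frakn_{(I)})^{j+1}V$ and spell out the weight bound (via Casselman's submodule theorem) and the proper-cone reasoning explicitly; the content is the same.
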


\begin{proof}
	One needs to see that $$ \calJ_I^- (V)_{\langle \rho_{(I)} \rangle} \to pres_I (V)_{\langle 0 \rangle} $$ is injective. This will follow if we see that $$ \left( \frakn_{(I)} \calJ_I^- (V) \right)_{\langle \rho_{(I)} \rangle} = 0.$$ To that end, notice that all $\omega \in wt_{\fraka_{cent,I}} (\frakn_{(I)} \calJ_I^- (V))$ are contained in $$ wt_{\fraka_{cent,I}}(V / \frakn_{(I)} V) + \left. \left( \left( \sum_{\alpha \in R^+_{(I)}} \bbZ_{\ge 0} \cdot \alpha \right)  \setminus \{ 0 \} \right)\right\rvert_{\fraka_{cent,I}}.$$ Thus, since $V$ is tempered, the real part of every $\omega \in wt_{\fraka_{cent,I}} (\frakn_{(I)} \calJ_I^- (V))$ is the restriction to $\fraka_{cent,I}$ of some weight of the form $$ \rho_{(I)} + \lambda + \left( \left( \sum_{\alpha \in R^+_{(I)}} \bbZ_{\ge 0} \cdot \alpha \right)  \setminus \{ 0 \} \right)$$ where $\lambda \in \fraka^*$ satisfies $\lambda \ge_{\Sigma} 0$. In particular, this real part clearly can not be $\rho_{(I)}|_{\fraka_{cent,I}}$.
\end{proof}

The following can be thought of as the main difference between the tempered and non-tempered cases, explaining why the archimedean tempered case re-gains similarity to the non-archimedean case.

\begin{proposition}\label{temppres exact}
	The functor $$ temppres_I : \calM^a (\frakg , K)_{\temp} \to \calM^a (\frakg_I , K_I)_{\temp}$$ is exact.
\end{proposition}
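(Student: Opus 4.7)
The plan is to exploit the identification $temppres_I(V) \cong \calJ_I^-(V)_{\langle \rho_{(I)} \rangle}$ established in Lemma \ref{lem tempered pres J same}, which rewrites the functor on tempered admissible modules as a composition of two manifestly exact operations.

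First, I would observe that Lemma \ref{lem tempered pres J same} is naturally enhanced to a natural isomorphism of functors
\[
temppres_I \;\cong\; \calJ_I^-(-)_{\langle \rho_{(I)} \rangle}
\]
on $\calM^a(\frakg, K)_{\temp}$. Indeed, the projection $\calJ_I^-(V) \to \bbC_{\rho_{(I)}} \otimes pres_I(V)$ is natural in $V$, and the twist by $\bbC_{\rho_{(I)}}$ exactly shifts the weight $\langle 0 \rangle$ on $pres_I(V)$ to the weight $\langle \rho_{(I)} \rangle$ on $\calJ_I^-(V)$; the lemma says the resulting natural transformation is an isomorphism once we restrict the source to tempered $V$.

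Next, I would invoke Casselman's exactness result: $\calJ_I^- : \calM^a(\frakg, K) \to \calM^a(\frakg, K_I N_{(I)}^-)$ is exact (the Proposition just after Lemma \ref{lem J is OK}). Given a short exact sequence $0 \to V_1 \to V_2 \to V_3 \to 0$ in $\calM^a(\frakg, K)_{\temp}$, the induced sequence $0 \to \calJ_I^-(V_1) \to \calJ_I^-(V_2) \to \calJ_I^-(V_3) \to 0$ is exact in $\calM^a(\frakg, K_I N_{(I)}^-)$.

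Finally, I would note that the functor $W \mapsto W_{\langle \lambda \rangle}$ is exact: on $\fraka_{cent,I}$-locally finite modules, taking the generalized eigenspace $W_{\fraka_{cent,I}, \omega}$ for a single generalized weight $\omega$ is an exact functor (it is a direct summand by locally finiteness), and $W_{\langle \lambda \rangle}$ is just the direct sum of such summands over a fixed set of generalized weights with prescribed real part. Admissibility (via Lemma \ref{clm admissible 2}) guarantees $\calJ_I^-(V)$ is $\fraka_{cent,I}$-locally finite, so this applies. Combining the two exactness statements gives exactness of $\calJ_I^-(-)_{\langle \rho_{(I)} \rangle}$, and hence of $temppres_I$.

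There is essentially no obstacle here beyond what is already packaged into Lemma \ref{lem tempered pres J same} and the exactness of $\calJ_I^-$; the whole point of the setup was to route $temppres_I$ through the Casselman-Jacquet functor, which behaves well precisely because the relevant generalized weights sit on the boundary of the cone of exponents imposed by temperedness, making the projection $\calJ_I^- \twoheadrightarrow pres_I \otimes \bbC_{\rho_{(I)}}$ an isomorphism in the relevant weight. The only thing to double-check in the write-up is that the ambient categorical setup (codomain of $temppres_I$ being tempered modules) is respected, i.e., that $\calJ_I^-(V)_{\langle \rho_{(I)} \rangle}$ really is tempered as a $(\frakg_I, K_I)$-module; this is automatic since it agrees with $temppres_I(V)$ by the lemma.
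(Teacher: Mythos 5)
Your proof is correct and follows exactly the same route as the paper's one-line argument: identify $temppres_I$ with $\calJ_I^-(-)_{\langle \rho_{(I)} \rangle}$ via Lemma \ref{lem tempered pres J same} and then use exactness of $\calJ_I^-$. The only difference is that you spell out the (elementary) exactness of the generalized-weight-extraction functor $(\cdot)_{\langle \rho_{(I)} \rangle}$, which the paper leaves implicit.
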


\begin{proof}
	This follows immediately from Lemma \ref{lem tempered pres J same}, as $\calJ_I^-$ is exact.
\end{proof}

\subsection{Casselman's canonical pairing for tempered admissible modules}

Let $V \in \calM^a (\frakg , K)$. Recall (Theorem \ref{thm Cass pair}) Casselman's canonical pairing $$ \calJ_I (V) \otimes \calJ_I^- (V) \to \bbC.$$ It induces a pairing $$ \calJ_I (V)_{\langle \rho_{(I)} \rangle} \otimes \calJ_I^- (V)_{\langle -\rho_{(I)} \rangle} \to \bbC.$$ Since the former pairing is non-degenerate, so is the latter. Now, assume that $V$ is tempered. By Lemma \ref{lem tempered pres J same} the latter pairing can be rewritten as $$ temppres_I^- (V) \otimes temppres_I (V^{\vee}) \to \bbC.$$

\medskip

Let us thus summarize:

\begin{theorem}\label{thm Cass pair tempered}
	Let $V \in \calM^a (\frakg , K)_{\temp}$. There exists a canonical non-degenerate pairing $$ temppres_I^- (V) \otimes temppres_I (V^{\vee}) \to \bbC.$$ In other words, one has a canonical isomorphism $$ temppres_I^- (V)^{\vee} \cong temppres_I (V^{\vee}).$$
\end{theorem}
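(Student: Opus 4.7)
The plan is to obtain the desired pairing by cutting down Casselman's canonical pairing to prescribed $\fraka_{cent,I}$-weight components and then identifying those components with the tempered parabolic restrictions via Lemma \ref{lem tempered pres J same}. First I would invoke Theorem \ref{thm Cass pair} to get the canonical non-degenerate pairing
\[ \calJ_I (V) \otimes \calJ_I^- (V^{\vee}) \to \bbC, \]
which is $(\frakg_I , K_I)$-invariant, so in particular $\fraka_{cent,I}$-invariant.

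The key observation is then that because $\fraka_{cent,I}$ acts locally finitely on both factors, the invariance of the pairing forces a generalized eigenvector of weight $\omega$ on the left and a generalized eigenvector of weight $\omega'$ on the right to pair to zero unless $\omega + \omega' = 0$. Consequently, the original pairing restricts, for any $\lambda \in \fraka^*$, to a non-degenerate pairing between $\calJ_I (V)_{\langle \lambda \rangle}$ and $\calJ_I^- (V^{\vee})_{\langle -\lambda \rangle}$ (the complementary components pair into the zero orthogonal direction, so non-degeneracy is preserved). I would specialize this to $\lambda = -\rho_{(I)}$.

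The last step is to identify these weight components with $temppres$'s. Since $V$ is tempered, so is $V^{\vee}$ (as quoted in the remark on contragradients of tempered modules), so I can apply Lemma \ref{lem tempered pres J same} to $V^{\vee}$ directly, yielding
\[ \calJ_I^- (V^{\vee})_{\langle \rho_{(I)} \rangle} \cong temppres_I (V^{\vee}). \]
Swapping the roles of $P_I$ and $P_I^-$ throughout the statement and proof of that lemma gives the parallel identification
\[ \calJ_I (V)_{\langle -\rho_{(I)} \rangle} \cong temppres_I^- (V). \]
Assembling these isomorphisms with the restricted pairing produces the claimed canonical non-degenerate pairing $ temppres_I^- (V) \otimes temppres_I (V^{\vee}) \to \bbC$, which is what the ``in other words" reformulation records.

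The only mildly delicate point I expect is the first step of the middle paragraph, i.e.\ the precise verification that restriction of the Casselman pairing to a pair of opposite-real-part $\langle \cdot \rangle$-components remains non-degenerate. All other ingredients are direct applications of earlier results; in particular, the opposite-parabolic version of Lemma \ref{lem tempered pres J same} requires no new idea beyond a relabelling of the proof already given.
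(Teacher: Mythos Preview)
Your proposal is correct and follows essentially the same approach as the paper: restrict Casselman's canonical pairing (Theorem~\ref{thm Cass pair}) to opposite $\langle\cdot\rangle$-components, note that non-degeneracy is preserved, and then identify those components with $temppres_I^-(V)$ and $temppres_I(V^{\vee})$ via Lemma~\ref{lem tempered pres J same} and its opposite-parabolic analogue. Your choice $\lambda=-\rho_{(I)}$ (so that $\calJ_I(V)_{\langle -\rho_{(I)}\rangle}\cong temppres_I^-(V)$ and $\calJ_I^-(V^{\vee})_{\langle \rho_{(I)}\rangle}\cong temppres_I(V^{\vee})$) is the correct one, and the ``mildly delicate point'' you flag is exactly the step the paper asserts in one line.
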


\subsection{Second adjointness for tempered admissible modules}

One can quite formally rewrite Theorem \ref{thm Cass pair tempered} as follows:

\begin{theorem}\label{thm second adj temp}
	There is a natural adjunction $$ pind_I : \calM^a (\frakg_I , K_I)_{\temp} \rightleftarrows \calM^a (\frakg , K)_{\temp} : temppres_I^-. $$
\end{theorem}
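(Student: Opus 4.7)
The plan is to deduce the adjunction formally from Theorem \ref{thm Cass pair tempered}. The two structural ingredients I will combine are the tautological adjunction $temppres_I \dashv pind_I$ (built into the definition of $temppres_I$) and the contragradient duality $(\cdot)^{\vee}$ on admissible modules, together with the standard fact that normalized parabolic induction commutes with contragradient, i.e.\ $pind_I(W)^{\vee} \cong pind_I(W^{\vee})$. Note that both $V \in \calM^a(\frakg,K)_{\temp}$ and $W \in \calM^a(\frakg_I, K_I)_{\temp}$ have tempered contragredients (as the paper has already recalled), so all the objects in the argument stay inside the tempered subcategories.

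Concretely, I would chain natural isomorphisms
\begin{align*}
\Hom(pind_I(W), V)
&\cong \Hom(V^{\vee}, pind_I(W)^{\vee}) \\
&\cong \Hom(V^{\vee}, pind_I(W^{\vee})) \\
&\cong \Hom(temppres_I(V^{\vee}), W^{\vee}) \\
&\cong \Hom(W, temppres_I(V^{\vee})^{\vee}) \\
&\cong \Hom(W, temppres_I^-(V)),
\end{align*}
where the first and fourth steps use that $(\cdot)^{\vee}$ is an equivalence to the opposite category with $(\cdot)^{\vee\vee}\cong \textnormal{id}$, the second step uses the compatibility of $pind_I$ with contragradient, the third step is the defining adjunction $temppres_I \dashv pind_I$, and the last step applies Theorem \ref{thm Cass pair tempered} in its dualized form $temppres_I^-(V) \cong temppres_I(V^{\vee})^{\vee}$. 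Since each individual step is natural in both $V$ and $W$, the composite is a natural isomorphism of bifunctors, which is precisely the data of the claimed adjunction.

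The only ingredient which is not purely formal from what has already been set up is the compatibility $pind_I(W)^{\vee} \cong pind_I(W^{\vee})$; this is the main point that needs attention. It is a classical feature of normalized parabolic induction, and the $\rho_{(I)}$-twist placed into the definition of $pres_I$ is exactly what makes it hold. I would either cite it directly, translate to the representation side via the Casselman-Wallach equivalence (Theorem \ref{Casselman-Wallach}) and quote the standard duality for smooth normalized parabolic induction, or argue abstractly using that $(\cdot)^{\vee}$ carries $pres_I$ to $pres_I^-$ and then passing to right adjoints. Modulo this one compatibility, the theorem is a purely formal consequence of Casselman's canonical pairing in its tempered form.
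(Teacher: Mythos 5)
Your proposal is correct and essentially identical to the paper's proof: both chain the same natural isomorphisms (duality, compatibility of $pind_I$ with contragradient, the defining adjunction $temppres_I \dashv pind_I$, Theorem \ref{thm Cass pair tempered}), with the only cosmetic difference being that you apply $(\cdot)^{\vee}$ before invoking the canonical pairing while the paper does so after. The paper likewise treats $pind_I(W)^{\vee} \cong pind_I(W^{\vee})$ as a well-known input, so your added remarks on how to justify it are supplementary rather than a deviation.
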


\begin{proof}
	Let $V \in \calM^a (\frakg , K)_{\temp}$ and $W \in \calM^a (\frakg_I , K_I)_{\temp}$. One has: $$ \Hom (pind_I (W) , V) \cong \Hom (V^{\vee} , pind_I (W)^{\vee}) \cong \Hom (V^{\vee} , pind_I (W^{\vee})) \cong $$ $$ \Hom (temppres_I (V^{\vee}) , W^{\vee}) \cong \Hom (temppres_I^- (V)^{\vee} , W^{\vee}) \cong \Hom (W , temppres_I^- (V)).$$
	
	Here, we used the well-known isomorphism $pind_I (W)^{\vee} \cong pind_I (W^{\vee})$.
\end{proof}

\section{Relation to Bernstein morphisms}\label{sec Bernstein morphisms}

In this section we briefly record how the canonical pairing for tempered admissible modules should be related to the construction of Bernstein morphisms.

\subsection{Boundary degenerations and Bernstein morphisms}

Let us denote $$ Y_I := (G(\bbR) \times G(\bbR)) / (\Delta G_I (\bbR) \cdot (N_{(I)}^- (\bbR) \times N_{(I)} (\bbR))).$$ One has $Y_{\Sigma} \cong G(\bbR)$, and the $Y_I$'s are ``boundary degenerations" of $Y_{\Sigma}$. According to ideas of J. Bernstein, one should have ($G(\bbR)\times G(\bbR)$-equivariant) \emph{Bernstein morphisms} $$ Ber_I : L^2 (Y_I) \to L^2 (Y_{\Sigma}),$$ which are (not necessarily surjective) isometries, and which should provide a conceptual derivation of the Plancherel formula for $L^2(Y_{\Sigma})$ (modulo knowledge of twisted discrete spectrum).

\medskip

And indeed, such Bernstein morphisms (in a much greater generality, of some spherical varieties) were constructed in \cite{DeKnKrSc} (see also \cite{SaVe} for the non-archimedean case).

\subsection{Relation of the canonical pairing to boundary degenerations}\label{sec Bernstein morphisms description of prime}

The following is a (presumably) well-known ``automatic continuity" result:

\begin{lemma}\label{lem auto cont}
	Let $U \in \calM^a (\frakg \oplus \frakg , K \times K)$. Then the map\footnote{By $W^{*,\frakh}$ we denote the space of functionals on $W$ which are invariant under $\frakh$, i.e. annihilating $\frakh W$.} $$ \Hom_{\frakg \oplus \frakg , K \times K} (U , C^{\infty}(Y_I)) \to U^{* , \Delta \frakg_I + (\frakn_{(I)}^- \oplus \frakn_{(I)})}$$ given by evaluation at $1$ is a bijection.
\end{lemma}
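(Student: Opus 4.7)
The plan is to establish the bijection via a standard chain of identifications: use Casselman--Wallach to replace $U$ with its smooth globalization $U^\infty$, apply smooth Frobenius reciprocity for the homogeneous space $Y_I$, and conclude with an automatic continuity argument to pass from continuous functionals on $U^\infty$ back to algebraic functionals on $U$. Throughout I will write $\frakh_I := \Delta\frakg_I + (\frakn_{(I)}^- \oplus \frakn_{(I)})$ and $H_I := \Delta G_I(\bbR) \cdot (N_{(I)}^-(\bbR) \times N_{(I)}(\bbR))$, so that $\frakh_I$ is precisely the Lie algebra of the stabilizer $H_I$ of $1 \in Y_I$.

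The forward direction is immediate: given $\phi \in \Hom_{\frakg\oplus\frakg, K\times K}(U, C^\infty(Y_I))$ and $X \in \frakh_I$, the value $(X \cdot \phi(u))(1)$ is the derivative of $\phi(u)$ at $1$ along a direction tangent to the $H_I$-orbit through $1$, hence along the zero vector. So $u \mapsto \phi(u)(1)$ annihilates $\frakh_I U$.

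For the inverse direction, I would proceed through three successive identifications. First, Casselman--Wallach (Theorem \ref{Casselman-Wallach}) yields $\Hom_{\frakg \oplus \frakg, K\times K}(U, C^\infty(Y_I)) \cong \Hom^{\mathrm{cts}}_{G(\bbR)\times G(\bbR)}(U^\infty, C^\infty(Y_I))$, exploiting that $C^\infty(Y_I)$ is a smooth moderate-growth $G(\bbR)\times G(\bbR)$-representation so that any HC-module map extends uniquely to a continuous equivariant map on the SAF globalization. Second, smooth Frobenius reciprocity for $Y_I = (G(\bbR)\times G(\bbR))/H_I$ gives $\Hom^{\mathrm{cts}}_{G(\bbR)\times G(\bbR)}(U^\infty, C^\infty(Y_I)) \cong (U^\infty)^{*, \mathrm{cts}, H_I}$, with the bijection implemented by evaluation at $1$. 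Third, one identifies $(U^\infty)^{*, \mathrm{cts}, H_I} \cong U^{*, \frakh_I}$ by restriction to the dense subspace $U$; injectivity is immediate from density, and composing with the preceding two identifications we recover evaluation at $1$.

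The main obstacle is surjectivity in the third identification: every $\frakh_I$-invariant algebraic functional $\alpha$ on $U$ must extend to a continuous $H_I$-invariant functional on $U^\infty$. This is the ``automatic continuity'' content of the lemma. The continuity piece should follow by decomposing $U$ into $\fraka_{cent,I}$-generalized eigenspaces (compare Lemma \ref{clm admissible 3} applied in the $(\frakg \oplus \frakg, K\times K)$-setting), using that $\alpha$ factors through $U/\frakh_I U$, and then bounding $\alpha$ on each finite-dimensional $K\times K$-isotypic piece in terms of the Frechet seminorms on $U^\infty$. The upgrade from $\frakh_I$-invariance of the extension (i.e., $H_I^0$-invariance by exponentiation, once continuity is in hand) to full $H_I$-invariance is then a compatibility check against the finite component group $\pi_0(H_I) = \pi_0(G_I(\bbR))$, using how this interacts with the $K \times K$-structure on $U$ built into the assumption $U \in \calM^a(\frakg \oplus \frakg, K \times K)$.
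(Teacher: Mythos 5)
Your first two reductions — replacing $U$ by $U^\infty$ via Casselman--Wallach, and applying smooth Frobenius reciprocity for $Y_I$ to identify $\Hom^{\mathrm{cts}}_{G(\bbR)\times G(\bbR)}(U^\infty, C^\infty(Y_I))$ with the space of continuous $H_I$-invariant functionals on $U^\infty$ — are exactly the paper's opening moves, and you correctly isolate the crux: showing that an algebraic $\frakh_I$-invariant functional $\ell$ on $U$ extends to a continuous functional on $U^\infty$.

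But the way you propose to handle that crux does not work and is a genuine gap. ``Bounding $\alpha$ on each finite-dimensional $K\times K$-isotypic piece in terms of the Frechet seminorms on $U^\infty$'' is not an argument; it is a restatement of what automatic continuity asserts. There are infinitely many $K\times K$-types, and the problem is precisely to produce \emph{uniform} control over all of them, which does not come for free from admissibility or from the factorization through $U/\frakh_I U$. No amount of decomposition into $\fraka_{cent,I}$-generalized eigenspaces produces the needed estimate; this is a real analytic theorem, not a soft reduction.

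The paper gets around this by a two-step reduction you do not have. First, writing $pres$ for (unnormalized) parabolic restriction along $G_I N_{(I)}^- \times G_I N_{(I)} \subset G\times G$, one constructs a \emph{continuous} $G(\bbR)\times G(\bbR)$-equivariant map $U^\infty \to pres(U)^\infty$ extending the algebraic projection $U \to pres(U)$: this comes from the unit $U \to pind(pres(U))$, the identification $pind(W)^\infty \cong \textnormal{\textbf{pind}}(W^\infty)$ with the usual analytic parabolic induction, and evaluation at $1$. Since $\ell$ kills $(\frakn_{(I)}^-\oplus\frakn_{(I)})U$, it factors through $pres(U)$ as a $\Delta\frakg_I$-invariant functional $\ell'$. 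Now $\Delta G_I(\bbR) \subset G_I(\bbR)\times G_I(\bbR)$ is a \emph{symmetric} subgroup, so one may invoke the van den Ban--Delorme automatic continuity theorem \cite{BaDe} to extend $\ell'$ continuously to $pres(U)^\infty$, and precomposing with the continuous map above finishes the argument. In short: the unipotent part of $H_I$ is absorbed algebraically via the Jacquet/parabolic-restriction mechanism, and the remaining reductive part $\Delta G_I$ is handled by the known automatic continuity theorem for symmetric subgroups. Your proposal lacks both of these ingredients, and in particular supplies no replacement for the essential citation of \cite{BaDe}.

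As a minor point, your worry about passing from $H_I^0$-invariance to $H_I$-invariance is reasonable to raise, but it is subsidiary and would resolve routinely once the continuity is actually established; it is not where the difficulty lies.
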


\begin{proof}
	Given in Appendix \ref{sec proofs}.
\end{proof}

Let $V \in \calM^a (\frakg , K)_{\temp}$. Corresponding to the tautological pairing $V \otimes V^{\vee} \to \bbC$, under the identification of Lemma \ref{lem auto cont}, is the matrix coefficients map $$ \alpha : V \otimes V^{\vee} \to C^{\infty} (Y_{\Sigma}).$$ Additionally, corresponding to the pairing of Theorem \ref{thm Cass pair tempered}, again using Lemma \ref{lem auto cont}, one obtains a map $$\alpha^{\prime} : V \otimes V^{\vee} \to C^{\infty} (Y_I).$$

\subsection{Bernstein morphism via canonical pairing}

Let us fix a Plancherel decomposition for $L^2 (Y_{\Sigma})$ (see \cite{Be} for more details): A measure space $(\Omega , \mu)$, and for each $\omega \in \Omega$ a tempered irreducible module $V_{\omega} \in \calM^a (\frakg , K)_{\temp}$. The matrix coefficient map $$ \alpha_{\omega} : V_{\omega} \otimes V_{\omega}^{\vee} \to C^{\infty} (Y_{\Sigma})$$ gives rise to the ``adjoint" map $$ \beta_{\omega} : C^{\infty}_c (Y_{\Sigma}) \to (V_{\omega} \otimes V_{\omega}^{\vee})^{(2)}$$ (here $(\cdot)^{(2)}$ denotes the completion w.r.t. the inner product - $V_{\omega} \otimes V_{\omega}^{\vee}$ has a canonical one) - again, see \cite{Be} for details. The data is required to give rise to an isomorphism of Hilbert spaces $$ pl: L^2 (Y_{\Sigma}) \xrightarrow{\cong} \int_{\omega \in \Omega}^{\oplus} (V_{\omega} \otimes V_{\omega}^{\vee})^{(2)} d\mu : \quad \phi \mapsto \left[ \beta_{\omega} (\phi) \right]_{\omega \in \Omega}.$$

\medskip

Now, by \S\ref{sec Bernstein morphisms description of prime} we also have maps $$ \alpha_{\omega}^{\prime} : V_{\omega} \otimes V_{\omega}^{\vee} \to C^{\infty} (Y_I),$$ and to them correspond the ``adjoint" maps $$\beta_{\omega}^{\prime} : C^{\infty}_c (Y_I) \to (V_{\omega} \otimes V_{\omega}^{\vee})^{(2)}.$$

\begin{expectation}
	The Bernstein morphism $$ Ber_I : L^2 (Y_I) \to L^2 (Y_{\Sigma}) $$ is given by $$ \phi \mapsto pl^{-1} (\left[ \beta_{\omega}^{\prime} (\phi) \right]_{\omega \in \Omega})$$ for $\phi \in C^{\infty}_c (Y_I)$.
\end{expectation}

\begin{remark}
	As far as we understand, establishing the above expectation should be simply a matter of comparing the languages of \cite{DeKnKrSc} and the current paper.
\end{remark}

\section{Functors}\label{sec functors}

In this section, we describe the functors $\calB_I$ and $\calC_I$ which we studied in \cite{Yo1}, and their relation with $pind_I$ and $pres_I$.

\medskip

One can summarize the functors in the following diagram: $$ \xymatrix@R=60pt{\calM (\frakg , K) \ar@/^1pc/[d]^{\calC_I} \ar@/_6pc/[dd]_{pres_I} \\ \calM (\frakg , K_I N_{(I)}) \ar@/^1pc/[u]^{\calB_I} \ar@/^1pc/[d]^{cofib_I} \ar@/^6pc/[d]^{fib^-_I} \\ \calM (\frakg_I , K_I) \ar@/^1pc/[u]^{\Delta_I} \ar@/^3.5pc/[uu]^{pind_I}}.$$ Here, all functors preserve the admissible subcategories. We have three adjunctions $$ (\calB_I , \calC_I); \quad (\Delta_I , cofib_I); \quad (pres_I , pind_I),$$ the relation $$ pind_I \cong \calB_I \circ \Delta_I,$$ a morphism $$ cofib_I \to fib_I^-, $$ and on the admissible subcategories an isomorphism $$ fib^-_I \circ \calC_I \cong pres^-_I$$ (where $pres_I^-$ is analogous to $pres_I$, but using the opposite parabolic).

\subsection{The functors $\calB_I$ and $\calC_I$}

\begin{definition}\
	\begin{enumerate}
		\item We define the functor\footnote{This can be called \emph{Bernstein's functor}, as it is similar to a functor Bernstein has studied, which in turn is a version of \emph{Zuckerman's functor}.} $$\calB_I : \calM (\frakg , K_I N_{(I)}) \to \calM (\frakg , K)$$ by $$ \calB_I (V) := \left( \calO (K) \otimes V \right)^{K_I}_{\frakk}.$$ Here the notations are as follows. The $K$-action on $\calO (K) \otimes V$ is the left regular one on $\calO (K)$. The $\frakg$-action on $\calO (K) \otimes V$ is $\xi (f) (k) = {}^{k^{-1}} \xi \cdot f(k)$, where we think about $f \in Fun (K,V) \cong \calO (K) \otimes V$ . The $K_I$-action w.r.t. which we take invariants is $ m(f \otimes v) = R_m f \otimes mv$ (here $R_m$ denotes the right regular action of $m$). The $\frakk$-action w.r.t. which we take coinvariants is the difference between the $\frakk$-action gotten by differentiating the $K$-action, and the $\frakk$-action gotten by restricting the $\frakg$-action. The actions of $\frakg$ and $K$ are well-defined after passing to the invariants and coinvariants, and we obtain a $(\frakg , K)$-module in this way.
		\item We define the functor\footnote{This can be called the \emph{Casselman-Jacquet functor}, in veiw of Theorem \ref{thm Cass pair reformulation}.} $$ \calM (\frakg , K_I N_{(I)}) \leftarrow \calM (\frakg , K): \calC_I$$ as the right adjoint of $\calB_I$.
	\end{enumerate}
\end{definition}

\begin{remark}
	In more geometric terms, say using $D$-algebras, the functor $\calB_I$ is given by forgetting the $N_{(I)}$-equivariancy, followed by performing $*$-averaging from $K_I$-equivariancy to $K$-equivariancy. See \cite{Yo1} for this as well as a more detailed (although, at some points, yet premature) discussion of the functors $\calB_I$ and $\calC_I$.
\end{remark}

\begin{remark}\label{rem concrete C}
	Let us describe the functor $\calC_I$ more concretely (again, see \cite{Yo1} for details). It is given by $$ \calC_I (V) = \left( \prod_{\alpha} V^{[\alpha]} \right)^{K_I\textnormal{-finite} , \  \frakn_{(I)}\textnormal{-torsion}}.$$
\end{remark}

\begin{lemma}\label{lem B C adm}
	The functors $\calB_I , \calC_I$ preserve the subcategories of admissible modules.
\end{lemma}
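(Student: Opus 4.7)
The plan is to verify admissibility of $\calB_I(V)$ and $\calC_I(V)$ by checking the concrete criteria from Lemmas \ref{clm admissible 1}--\ref{clm admissible 3} using the explicit formulas for these functors. In both cases the $Z(\frakg)$-finiteness part is essentially automatic, since $Z(\frakg) \subset U(\frakg)^K$ commutes with every group and Lie algebra action appearing, and acts only through the finite quotient $Z(\frakg)/\mathrm{Ann}_{Z(\frakg)}(V)$.

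For $\calC_I$: Given $V \in \calM^a(\frakg,K)$, I would use the formula $\calC_I(V) = \bigl(\prod_{\alpha \in \hat K} V^{[\alpha]}\bigr)^{K_I\text{-fin},\ \frakn_{(I)}\text{-tors}}$ from Remark \ref{rem concrete C} and verify criterion (4) of Lemma \ref{clm admissible 2}. The $K_I$-finite, $\frakn_{(I)}$-torsion subspace of the $K$-finite-dual completion of $V$ is identified with its $\fraka_{cent,I}$-finite subspace by essentially the same argument as Lemma \ref{lem J is OK}. Each generalized $\fraka_{cent,I}$-eigenspace is then assembled from pieces of the finite-dimensional $V^{[\alpha]}$ matching a fixed eigenvalue; the central character constraint coming from $Z(\frakg)$-finiteness of $V$ pins down only finitely many $K$-types contributing to any given eigenvalue, which makes every such eigenspace admissible as a $(\frakg_I,K_I)$-module.

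For $\calB_I$: Given $V \in \calM^a(\frakg,K_I N_{(I)})$, I would verify criterion (3) of Lemma \ref{clm admissible 1}; the substantive step is finite-dimensionality of each $K$-isotypic. Peter-Weyl gives $\calO(K) \cong \bigoplus_\beta V_\beta \otimes V_\beta^*$ as a $K \times K$-bimodule, so
\[
\calB_I(V)^{[\beta]} \cong V_\beta \otimes \bigl((V_\beta^* \otimes V)^{K_I}\bigr)_{\frakk},
\]
with appropriate residual actions, and the task reduces to showing the right tensor factor is finite-dimensional.

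The main obstacle is that $V$ itself need not have finite-dimensional $K_I$-isotypics: by Lemma \ref{clm admissible 2}, only its $\frakn_{(I)}^k$-torsion subspaces are $(\frakg_I,K_I)$-admissible. I would overcome this by exploiting the controlled $\fraka_{cent,I}$-weight support from Lemma \ref{clm admissible 3} together with an Iwasawa-type decomposition $\frakg = \frakk + \frakg_I + \frakn_{(I)}$: passage to $\frakk$-coinvariants identifies the $\frakg$-action with the $K$-action on $\frakk$, and combined with the $\frakn_{(I)}$-action moving down along the weight cone, it collapses the contributions from all the generalized $\fraka_{cent,I}$-eigenspaces of $V$ down to a finite-dimensional quotient of a single $(\frakg_I,K_I)$-admissible eigenspace. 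Carrying out this collapse together with the accompanying weight bookkeeping is the technical heart of the argument.
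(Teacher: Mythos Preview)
Your approach diverges substantially from the paper's, and both halves contain steps that are either incorrect as stated or too vague to constitute an argument.

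For $\calC_I$: the assertion that a generalized $\fraka_{cent,I}$-eigenspace of $\prod_\alpha V^{[\alpha]}$ is ``assembled from pieces of the finite-dimensional $V^{[\alpha]}$ matching a fixed eigenvalue'' misreads the situation. Since $\fraka_{cent,I}\cap\frakk=0$, the action of $\fraka_{cent,I}$ does \emph{not} preserve the individual $K$-isotypic components $V^{[\alpha]}$; the eigenvector condition on a sequence $(v_\alpha)_\alpha$ is therefore a coupled linear system across $K$-types, not a componentwise constraint, and your subsequent claim that $Z(\frakg)$-finiteness ``pins down only finitely many $K$-types contributing to any given eigenvalue'' has no evident content (an irreducible $V$ already has infinitely many $K$-types and a single infinitesimal character). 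The paper does not attempt any such direct analysis. It first treats $\calB_I$, and then handles $\calC_I$ purely through the adjunction $(\calB_I,\calC_I)$: $Z(\frakg)$-finiteness of $\calC_I(V)$ is formal from that of $\calB_I$, and finite $K_I$-multiplicities of $cofib_I(\calC_I(V))$ follow by rewriting
\[
\Hom_{K_I}\bigl(E,\,cofib_I(\calC_I(V))\bigr)\;\cong\;\Hom_{(\frakg,K)}\bigl(pind_I(W_E/JW_E),\,V\bigr)
\]
via the adjunctions and Proposition~\ref{prop rel B Delta pind}, the right-hand side being finite-dimensional since both arguments are admissible.

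For $\calB_I$: the Peter--Weyl reduction to showing that $\bigl((V_\beta^*\otimes V)^{K_I}\bigr)_{\frakk}$ is finite-dimensional is correct, but the ``collapse'' you sketch is not an argument. The $\frakk$-coinvariants here are with respect to the \emph{difference} of the two $\frakk$-actions, and the decomposition $\frakg=\frakk+\frakg_I+\frakn_{(I)}$ does not by itself convert that coinvariant condition into a usable statement about $\frakg_I$- or $\frakn_{(I)}$-actions on $V$; you have not supplied the mechanism by which the infinitely many $\fraka_{cent,I}$-weight spaces of $V$ get cut down to a finite-dimensional contribution. The paper's argument is structural rather than isotypic-by-isotypic: it runs a d\'evissage on $|wt_{\fraka_{cent,I}}(W)\cap S|$ (with $S$ the finite set of possible $\frakn_{(I)}$-highest weights allowed by the $Z(\frakg)$-annihilator), using the right-exact sequence
\[
\Delta_I(cofib_I(W))\to W\to C\to 0.
\]
The key point is Proposition~\ref{prop rel B Delta pind}: applying $\calB_I$ turns the left term into $pind_I(cofib_I(W))$, already known to be admissible by Lemmas~\ref{lem pind pres adm} and~\ref{lem cofib delta fib adm}, while the cokernel $C$ has strictly fewer top weights, so induction finishes. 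This reduction to $pind_I$ via $\calB_I\circ\Delta_I\cong pind_I$ is the missing idea in your outline.
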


\begin{proof}
	Given in Appendix \ref{sec proofs}.
\end{proof}

\subsection{The functors $\Delta_I$, $cofib_I$ and $fib^-_I$}

\begin{definition}\
	\begin{enumerate}
		\item We define the functor $$ \calM (\frakg_I , K_I) \leftarrow \calM (\frakg , K_I N_{(I)}) : cofib_I$$ by $$ cofib_I (V) := \bbC_{\rho_{(I)}} \otimes V^{\frakn_{(I)}}.$$
		\item We define the functor $$ \Delta_I : \calM (\frakg_I , K_I) \to \calM (\frakg , K_I N_{(I)})$$ as the left adjoint of $cofib_I$.
		\item We define the functor $$ fib^-_I: \calM (\frakg , K_I N_{(I)}) \to \calM (\frakg_I , K_I)$$ by $$fib^-_I (V) := \bbC_{\rho_{(I)}} \otimes V / \frakn_{(I)}^- V.$$
	\end{enumerate}
\end{definition}

\begin{remark}
	Let us describe the functor $\Delta_I$ more concretely. It is given by $$ \Delta_I (V) := U(\frakg) \underset{U(\frakg_I + \frakn_{(I)})}{\otimes} (\bbC_{-\rho_{(I)}} \otimes V),$$ where $\bbC_{-\rho_{(I)}} \otimes V$ is considered as a $U(\frakg_I + \frakn_{(I)})$-module by making $\frakn_{(I)}$ act by zero.
\end{remark}

\begin{remark}
	Notice that we have a morphism $$ cofib_I \to fib_I^-,$$ given by $V^{\frakn_{(I)}} \hookrightarrow V \twoheadrightarrow V / \frakn_{(I)}^- V$.
\end{remark}

\begin{lemma}\label{lem cofib delta fib adm}
	The functors $cofib_I , \Delta_I , fib^-_I$ preserve the subcategories of admissible modules.
\end{lemma}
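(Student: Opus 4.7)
The plan is to handle the three functors separately, in each case reducing to one of the equivalent characterizations of admissibility in Lemmas \ref{clm admissible 1}, \ref{clm admissible 2}, \ref{clm admissible 3}. The $\rho$-twists $\bbC_{\pm \rho_{(I)}}$ are one-dimensional modules supported on $\fraka_{cent,I}$, so they preserve admissibility trivially and can be ignored in what follows. For $cofib_I$ there is in fact nothing left to do: the equivalence of conditions (1) and (2) in Lemma \ref{clm admissible 2} is exactly the statement that $V^{\frakn_{(I)}}$ is an admissible $(\frakg_I , K_I)$-module whenever $V \in \calM^a(\frakg , K_I N_{(I)})$.

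For $\Delta_I$, given $V \in \calM^a(\frakg_I , K_I)$, I would first use PBW to identify $\Delta_I (V) \cong U(\frakn_{(I)}^-) \otimes_\bbC V$ as a $(\frakg_I , K_I)$-module, with $\frakg_I$ acting by the adjoint action on the first factor and its given action on the second (absorbing the $-\rho_{(I)}$-twist into the $\fraka_{cent,I}$-weights on $V$). Decomposing $U(\frakn_{(I)}^-) = \bigoplus_{\mu} U(\frakn_{(I)}^-)_{\mu}$ into its finite-dimensional $\fraka_{cent,I}$-weight spaces (with $\mu$ in $-\sum_{\alpha \in R^+_{(I)}} \bbZ_{\ge 0} \cdot \alpha|_{\fraka_{cent,I}}$), each generalized $\fraka_{cent,I}$-eigenspace of $\Delta_I(V)$ becomes a finite sum of pieces of the shape (finite-dimensional $\frakg_I$-representation) $\otimes$ (admissible $(\frakg_I , K_I)$-module), and is therefore admissible. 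The cone condition of Lemma \ref{clm admissible 3} is immediate from the displayed range of weights of $U(\frakn_{(I)}^-)$. Finally, $Z(\frakg)$-finiteness of $\Delta_I(V)$ follows from $Z(\frakg_I)$-finiteness of $V$ via the parabolic Harish-Chandra homomorphism $Z(\frakg) \to Z(\frakg_I)$, using that $Z(\frakg_I)$ is a finite extension of the image so that preimages of finite-codimension ideals remain finite-codimension. Applying Lemma \ref{clm admissible 3} concludes this case.

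For $fib^-_I$, let $V \in \calM^a(\frakg , K_I N_{(I)})$. Since $V$ is finitely generated over $U(\frakg)$ and $\fraka_{cent,I}$-locally finite (Lemmas \ref{clm admissible 2}, \ref{clm admissible 3}), I can choose a finite set $S \subset (\fraka_{cent,I})^*_{\bbC}$ so that $V_S := \bigoplus_{\lambda \in S} V_{\fraka_{cent,I} , \lambda}$ contains a full set of $U(\frakg)$-generators of $V$; the space $V_S$ is a $(\frakg_I , K_I)$-submodule of $V$, admissible as a finite direct sum of the admissible pieces $V_{\fraka_{cent,I} , \lambda}$ (again by Lemma \ref{clm admissible 3}). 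Because $\frakg_I$ normalizes $\frakn_{(I)}$ and commutes with $\fraka_{cent,I}$, the subspace $U(\frakn_{(I)}) V_S$ is a $(\frakg_I , K_I)$-submodule of $V$; moreover, $\frakn_{(I)}$ has strictly positive $\fraka_{cent,I}$-weights while the weights of $V$ are bounded above in a cone, so $U(\frakn_{(I)}) V_S$ is contained in finitely many $\fraka_{cent,I}$-generalized eigenspaces of $V$ and is therefore admissible. Using PBW together with $U(\frakg_I) V_S = V_S$ one obtains $V = U(\frakn_{(I)}^-) U(\frakn_{(I)}) V_S$, so $V / \frakn_{(I)}^- V$ is a $(\frakg_I , K_I)$-equivariant quotient of the admissible module $U(\frakn_{(I)}) V_S$, hence admissible.

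The only non-formal ingredient is the parabolic Harish-Chandra map invoked in the $\Delta_I$ step; this is standard but worth citing carefully. Everything else is routine bookkeeping with $\fraka_{cent,I}$-weights and with the characterizations of admissibility already established in the paper, so I do not expect any genuine obstacle.
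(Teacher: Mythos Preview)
Your proof is correct. For $cofib_I$ you match the paper exactly. For $\Delta_I$ and $fib^-_I$, however, the paper takes a shorter route: it simply verifies the \emph{definition} of admissibility (finitely generated over the enveloping algebra plus $Z$-finite) rather than going through the weight-space characterizations of Lemmas \ref{clm admissible 2} and \ref{clm admissible 3}. For $\Delta_I$ the paper just observes that $\Delta_I(W)$ is generated over $U(\frakg)$ by the image of $W$, so finite generation is immediate, and $Z(\frakg)$-finiteness follows from the Harish-Chandra homomorphism exactly as you say --- no weight decomposition is needed. For $fib^-_I$ the paper uses the triangular decomposition $\frakg = \frakn_{(I)}^- + \frakg_I + \frakn_{(I)}$ (together with local $\frakn_{(I)}$-nilpotence, so that $U(\frakn_{(I)})$ applied to a finite generating set stays finite-dimensional) to obtain finite generation of $V/\frakn_{(I)}^- V$ over $U(\frakg_I)$, and invokes the Harish-Chandra homomorphism again for $Z(\frakg_I)$-finiteness. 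Your $fib^-_I$ argument is really the same PBW computation, but packaged so as to conclude ``quotient of an admissible module'' and thereby bypass the second Harish-Chandra step; the cost is the extra $\fraka_{cent,I}$-weight bookkeeping. Both approaches are fine, but for $\Delta_I$ in particular the paper's direct verification of the definition is noticeably cleaner than the detour through Lemma \ref{clm admissible 3}.
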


\begin{proof}
	Given in Appendix \ref{sec proofs}.
\end{proof}

\begin{proposition}\label{prop rel B Delta pind}
	One has $$ \calB_I  \circ \Delta_I \cong pind_I.$$
\end{proposition}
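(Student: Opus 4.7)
My plan is to prove $pind_I \cong \calB_I \circ \Delta_I$ by verifying that $\calB_I \circ \Delta_I$ enjoys the universal property defining $pind_I$, i.e.\ that it is right adjoint to $pres_I$. Concretely, I will construct a bifunctorial isomorphism
\[
\Hom_{\calM(\frakg,K)}\bigl(V,\, \calB_I \Delta_I W\bigr) \;\cong\; \Hom_{\calM(\frakg_I,K_I)}\bigl(pres_I V,\, W\bigr)
\]
for $V \in \calM(\frakg,K)$ and $W \in \calM(\frakg_I,K_I)$. First I would unwind the right-hand side using the definition $pres_I V = \bbC_{-\rho_{(I)}} \otimes V/\frakn_{(I)}V$, the character-shift adjunction, and the standard adjunction between $V\mapsto V/\frakn_{(I)}V$ and the inclusion of $(\frakg_I,K_I)$-modules into $(\frakg_I + \frakn_{(I)}, K_I N_{(I)})$-modules with trivial $\frakn_{(I)}$-action. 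This rewrites the right-hand side as
\[
\Hom_{\calM(\frakg_I+\frakn_{(I)},\, K_I N_{(I)})}\bigl(V,\; \bbC_{\rho_{(I)}} \otimes W\bigr),
\]
which is the familiar Mackey/Harish-Chandra description of parabolic induction as coinduction from the parabolic pair.

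Next I would construct a canonical $(\frakg_I + \frakn_{(I)}, K_I N_{(I)})$-equivariant map
\[
\calB_I \Delta_I W \;\longrightarrow\; \bbC_{\rho_{(I)}} \otimes W
\]
by composing the evaluation at $e \in K$ of a class in $(\calO(K)\otimes \Delta_I W)^{K_I}_{\frakk}$ with the PBW projection $\Delta_I W \twoheadrightarrow \bbC_{-\rho_{(I)}}\otimes W$ (killing $\frakn_{(I)}^- \Delta_I W$), appropriately twisted. One checks that, although evaluation at $e$ is not $K$-equivariant and the projection is not $\frakg$-equivariant, their composition survives the $\frakk$-coinvariants and is $(\frakg_I + \frakn_{(I)}, K_I N_{(I)})$-equivariant. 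Pullback along this map defines a natural transformation
\[
\Phi_{V,W}: \Hom_{\calM(\frakg,K)}(V, \calB_I \Delta_I W) \longrightarrow \Hom_{\calM(\frakg_I+\frakn_{(I)},\, K_I N_{(I)})}\bigl(V,\; \bbC_{\rho_{(I)}} \otimes W\bigr).
\]

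To produce the inverse of $\Phi_{V,W}$, I would take a $(\frakg_I + \frakn_{(I)}, K_I N_{(I)})$-linear map $\psi: V \to \bbC_{\rho_{(I)}}\otimes W$ and define $\phi: V \to \calB_I \Delta_I W$ by the formula "$\phi(v)(k) = $ the class of $1 \otimes \psi(k^{-1}\cdot v)$" (suitably interpreted using $K$-finiteness of $v$ and the PBW-decomposition $\Delta_I W \cong U(\frakn_{(I)}^-)\otimes(\bbC_{-\rho_{(I)}}\otimes W)$). One then verifies: (i) $\phi$ lands in the $K_I$-invariants; (ii) the two $\frakk$-actions in the definition of $\calB_I$ agree on $\phi(v)$ modulo coinvariants, using that $V$ already carries a genuine $K$-action; (iii) the resulting $\phi$ is $(\frakg, K)$-linear; and (iv) the two constructions are mutually inverse. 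Finally one checks naturality in $V$ and $W$, whence the universal property of right adjoints gives $\calB_I \circ \Delta_I \cong pind_I$.

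The main obstacle is the construction of the inverse map and verifying that it is well-defined. The subtlety is that $\Delta_I W$ is a \emph{tensor}-induction from $\frakp_I$ to $\frakg$ (a left adjoint), so hom-sets \emph{into} $\Delta_I W$ are not directly computable by Frobenius reciprocity; one must use the ambient $(\frakg, K)$-structure of $V$ (translating $v$ by $k\in K$) to manufacture the "tensor" factors, and then rely on the $\frakk$-coinvariants of $\calB_I$ to identify the ambiguity of this construction with the image of the PBW generators $U(\frakn_{(I)}^-)^+$. Making this bookkeeping precise, with the $\rho$-shifts tracked correctly between $\Delta_I$, $pres_I$, and the Mackey description, is the technical heart of the proof.
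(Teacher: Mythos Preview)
Your proposal is correct and follows essentially the same route as the paper. The paper organizes the argument slightly differently: it first isolates the ``compact picture'' isomorphism of $K$-representations
\[
\left( \calO(K) \otimes V \right)^{K_I} \;\xrightarrow{\ \sim\ }\; \left( \calO(K) \otimes \Delta_I V \right)^{K_I}_{\frakk}
\]
(given by inserting $1$ in the $U(\frakg)$-slot), and then composes its inverse with evaluation at $1\in K$ to obtain the counit-type map $\calB_I\Delta_I V \to V$; the Frobenius reciprocity bijection is then checked against this map. Your step~3 --- sending $\psi$ to $\phi(v)(k)=[\,1\otimes\psi(k^{-1}v)\,]$ --- is exactly the compact-picture map composed with the obvious map $(\calO(K)\otimes W)^{K_I}$ coming from $\psi$, so your construction of the inverse and the paper's compact-picture isomorphism are two packagings of the same computation. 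The technical heart you identify (using the $\frakk$-coinvariants to absorb the $\frakn_{(I)}^-$-ambiguity arising from PBW) is precisely what makes the compact-picture map surjective.
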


\begin{proof}
	One first checks that the map $$ \left( \calO (K) \otimes V \right)^{K_I} \to \left( \calO (K) \otimes (U(\frakg) \underset{U(\frakg_I + \frakn_{(I)})}{\otimes} V) \right)^{K_I}_{\frakk},$$ given by inserting $1$ at the $U(\frakg)$-component, is an isomorphism of $K$-representations (this is the analog of the ``compact picture" for parabolic induction).
	
	\medskip
	
	Composing the inverse of this isomorphism with the evaluation at $1 \in K$, we obtain a map $$ \left( \calO (K) \otimes (U(\frakg) \underset{U(\frakg_I + \frakn_{(I)})}{\otimes} V) \right)^{K_I}_{\frakk} \to V.$$ One now routinely checks that for a $(\frakg , K)$-module $W$, by composing with this map one obtains a bijection $$ \textnormal{Hom}_{\frakg , K} (W , \left( \calO (K) \otimes (U(\frakg) \underset{U(\frakg_I + \frakn_{(I)})}{\otimes} V) \right)^{K_I}_{\frakk}) \cong \textnormal{Hom}_{\frakg_I , K_I} (W / \frakn_{(I)} W , \bbC_{2 \rho_{(I)}} \otimes V).$$
\end{proof}

\subsection{Casselman's canonical pairing in terms of the functors}

Casselman's canonical pairing (Theorem \ref{thm Cass pair}) has the following reformulation:

\begin{theorem}\label{thm Cass pair reformulation}
	There exists a canonical isomorphism of functors $$ \calC_I \cong \calJ_I : \calM^a (\frakg , K) \to \calM^a (\frakg , K_I N_{(I)}).$$
\end{theorem}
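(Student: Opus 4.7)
The plan is to prove $\calJ_I \cong \calC_I$ on admissibles by showing that $\calJ_I$ also serves as a right adjoint to $\calB_I$; since $\calC_I$ is by definition this right adjoint, the claimed natural isomorphism then follows from uniqueness of adjoints. More concretely, I would produce a natural morphism $\calJ_I(V) \to \calC_I(V)$ of $(\frakg, K_I N_{(I)})$-modules functorial in $V \in \calM^a (\frakg , K)$, and verify it is an isomorphism using the non-degeneracy assertion in Casselman's canonical pairing (Theorem \ref{thm Cass pair}).

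First I would observe that the defining formula $\calB_I(W) = (\calO(K) \otimes W)^{K_I}_\frakk$ uses only the $(\frakg, K_I)$-structure of $W$, and hence is symmetric in the exchange $N_{(I)} \leftrightarrow N_{(I)}^-$; this should give $\calB_I(W)^\vee \cong \calB_I^-(W^\vee)$ as $(\frakg,K)$-modules by a direct unwinding of the contragredient formula, and by passing to right adjoints one then gets the companion statement $\calC_I(V)^\vee \cong \calC_I^-(V^\vee)$. Combined with the reformulation of Casselman's pairing as $\calJ_I(V) \cong \calJ_I^-(V^\vee)^\vee$, the theorem is equivalent to matching these two duality pictures. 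To construct the comparison map, I would invoke the concrete descriptions of $\calC_I(V) = (\prod_\alpha V^{[\alpha]})^{\fraka_{cent,I}\textnormal{-fin}}$ (Remark \ref{rem concrete C} together with Lemma \ref{lem dual}) and $\calJ_I(V) = (\lim_k V / (\frakn_{(I)}^-)^k V)^{\fraka_{cent,I}\textnormal{-fin}}$ (Lemma \ref{lem J is OK}): both are $\fraka_{cent,I}$-finite vectors in different completions of $V$, and the comparison is essentially the device of asymptotic expansions of matrix coefficients already used in the proof of Theorem \ref{thm Cass pair}, where elements of a completion can be read off their pairings against $V^\vee$.

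The main obstacle is making this comparison map well-defined and bijective. Well-definedness requires controlling the interaction between the $K$-isotypic decomposition and the $\frakn_{(I)}^-$-adic filtration on $V$; I expect this to rely on Lemma \ref{clm admissible 3}, which guarantees that each $\fraka_{cent,I}$-generalized eigenspace of either completion is an admissible $(\frakg_I, K_I)$-module controlled by the weight-cone condition. For bijectivity, surjectivity should hold by construction, while injectivity — the hardest point — would be reduced to the non-degeneracy half of Casselman's pairing, whose analytical content (due to Milicic and Hecht--Schmid) is the essential input borrowed from Theorem \ref{thm Cass pair}. Once the iso is established for one parabolic, the symmetry from the duality step propagates it to the opposite parabolic automatically.
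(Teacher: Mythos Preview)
Your overall strategy --- reduce the comparison to Casselman's canonical pairing --- is the same as the paper's, but the paper's execution is considerably more direct, and your detour through the duality statement $\calB_I(W)^{\vee} \cong \calB_I^-(W^{\vee})$ is both unnecessary and not as innocent as you suggest. Dualizing $\frakk$-coinvariants gives $\frakk$-invariants, not coinvariants, so ``direct unwinding of the contragredient formula'' does not immediately produce that isomorphism; making it precise would require showing that $\frakk$-invariants and $\frakk$-coinvariants of $(\calO(K)\otimes W)^{K_I}$ coincide after passing to $K$-finite vectors, which is a nontrivial side quest.

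The paper bypasses all of this by noticing the elementary identification $\prod_{\alpha} V^{[\alpha]} \cong (V^{\vee})^*$. Once you have that, the concrete description of $\calC_I$ immediately becomes a statement about functionals on $V^{\vee}$: the $\frakn_{(I)}$-torsion vectors in $(V^{\vee})^*$ are precisely the functionals factoring through some $V^{\vee}/\frakn_{(I)}^k V^{\vee}$, and a short $\fraka_{cent,I}$-weight argument then gives $\calC_I(V) \cong \calJ_I^-(V^{\vee})^{\vee}$ as a purely algebraic fact, with no analytic input and no need to construct or analyze a comparison map between two completions. Only the final step $\calJ_I^-(V^{\vee})^{\vee} \cong \calJ_I(V)$ invokes the non-degeneracy of Casselman's pairing. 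Your instinct that ``elements of a completion can be read off their pairings against $V^{\vee}$'' is exactly right; what you are missing is that for the $K$-isotypic product this reading-off is a tautology, not a consequence of asymptotic expansions.
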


\begin{proof}
	This will be clearly a reformulation of Theorem \ref{thm Cass pair}, once we establish an isomorphism $$ \calC_I \cong (\cdot)^{\vee} \circ \calJ^-_I \circ (\cdot)^{\vee} : \calM^a (\frakg , K) \to \calM^a (\frakg , K_I N_{(I)}). $$ This is estbalished using the concrete description of $\calC_I$ in Remark \ref{rem concrete C}. Indeed, clearly $$ \prod_{\alpha} V^{[\alpha]} \cong (V^{\vee})^*,$$ and then $$ \left( \prod_{\alpha} V^{[\alpha]} \right)^{\frakn_{(I)}\textnormal{-torsion}} \cong (V^{\vee})^{* , \frakn_{(I)}\textnormal{-torsion}} \cong \left( \lim_{k \in \bbZ_{\ge 1}} (V^{\vee} / \frakn_{(I)}^k V^{\vee}) \right)^{\star} \cong \ldots $$ where $(\lim V^{\vee} / \frakn_{(I)}^k V^{\vee} )^{\star}$ denotes the subspace of the space of functionals, consisting of those which factor through the projection onto one of the $V^{\vee} / \frakn_{(I)}^k V^{\vee}$'s. By $\fraka_{cent,I}$-weight consideration, we can continue: $$ \cong \left( \left( \lim_{k \in \bbZ_{\ge 1}} (V^{\vee} / \frakn_{(I)}^k V^{\vee}) \right)^{\fraka_{cent,I}\textnormal{-finite}} \right)^{* , \ \fraka_{cent,I}\textnormal{-finite}}.$$ Therefore, we obtain: $$ \calC_I (V) = \left( \prod_{\alpha} V^{[\alpha]} \right)^{K_I\textnormal{-finite}, \ \frakn_{(I)}\textnormal{-torsion}} \cong $$ $$ \cong \left( \left( \lim_{k \in \bbZ_{\ge 1}} (V^{\vee} / \frakn_{(I)}^k V^{\vee}) \right)^{\fraka_{cent,I}\textnormal{-finite}} \right)^{* , \ K_I\textnormal{-finite} \ \fraka_{cent,I}\textnormal{-finite}} \cong \calJ_I^- (V^{\vee})^{\vee}.$$ 
\end{proof}

For our current purposes, only the following corollary will be needed:

\begin{corollary}\label{cor of C_I=J_I}
	One has an isomorphism of functors $$ fib^-_I \circ \calC_I \cong pres^-_I : \calM^a (\frakg , K) \to \calM^a (\frakg_I , K_I).$$
\end{corollary}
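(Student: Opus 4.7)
The plan is to combine Theorem~\ref{thm Cass pair reformulation} with the standard comparison of a module with its Casselman--Jacquet module modulo $\frakn_{(I)}^-$.

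First I would use the isomorphism $\calC_I \cong \calJ_I$ from Theorem~\ref{thm Cass pair reformulation} to reduce the statement to producing a natural isomorphism $fib^-_I \circ \calJ_I \cong pres^-_I$, i.e.\ a natural $(\frakg_I , K_I)$-equivariant isomorphism $\calJ_I (V) / \frakn_{(I)}^- \calJ_I (V) \cong V / \frakn_{(I)}^- V$ for every admissible $V$, since tensoring with $\bbC_{\rho_{(I)}}$ is harmless.

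Next I would construct the natural transformation. The canonical projections $V / (\frakn_{(I)}^-)^k V \to V / \frakn_{(I)}^- V$ assemble into a $(\frakg_I , K_I)$-equivariant projection $\lim_k V/(\frakn_{(I)}^-)^k V \to V / \frakn_{(I)}^- V$. Restricting to the submodule $\calJ_I (V) \subseteq \lim_k V/(\frakn_{(I)}^-)^k V$ yields a $(\frakg_I , K_I)$-equivariant map that kills $\frakn_{(I)}^- \calJ_I (V)$ (as the target is annihilated by $\frakn_{(I)}^-$), and hence descends to the desired natural map $\calJ_I (V) / \frakn_{(I)}^- \calJ_I (V) \to V / \frakn_{(I)}^- V$; tensoring with $\bbC_{\rho_{(I)}}$ gives the natural transformation $fib^-_I \circ \calJ_I \to pres^-_I$.

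The main obstacle is to check that this map is an isomorphism, which I would deduce from Casselman's classical comparison theorem: for any admissible $V$ and every $k \ge 1$, both the natural map $V \to \lim_j V/(\frakn_{(I)}^-)^j V$ and the inclusion $\calJ_I (V) \hookrightarrow \lim_j V/(\frakn_{(I)}^-)^j V$ induce isomorphisms after quotienting by $(\frakn_{(I)}^-)^k$ (see, e.g., \cite[\S 4.1]{Wa2}, where the case relevant for $I = \emptyset$ is treated and the general case is entirely analogous). Specializing to $k = 1$ and composing the two isomorphisms yields exactly the map constructed above, completing the proof.
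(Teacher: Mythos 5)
Your proof is correct and takes essentially the same route as the paper: reduce via Theorem~\ref{thm Cass pair reformulation} to the identification $fib^-_I \circ \calJ_I \cong pres^-_I$, which the paper simply calls an ``easy relation'' and leaves unjustified. You usefully spell out that easy relation (construct the natural map through the $\frakn_{(I)}^-$-adic completion, then invoke the Artin--Rees/density facts for $\calJ_I$ from Wallach's treatment), but this is precisely the well-known content being invoked, not a different argument.
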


\begin{proof}
	In view of Theorem \ref{thm Cass pair reformulation}, this follows from the easy relation $$ fib^-_I \circ \calJ_I \cong pres^-_I.$$
\end{proof}

\section{Second adjointness - second take}\label{sec second adj second take}

In this section we describe again second adjointness for tempered admissible modules, but with an emphasis on trying to work with all admissible modules (rather than just the tempered ones).

\subsection{Second ``preadjointness" for admissible modules}

From \S\ref{sec functors} wee see that we have an adjunction $$ pind_I : \calM^a (\frakg_I , K_I) \rightleftarrows \calM^a (\frakg , K) : cofib_I \circ \calC_I,$$ and a morphism $$ cofib_I \to fib_I^-.$$ Thus, we obtain a morphism of functors $$ cofib_I \circ \calC_I \to fib^-_I \circ \calC_I \cong pres^-_I,$$ where the latter isomorphism is Corollary \ref{cor of C_I=J_I} which, let us remind, uses the non-trivial Casselman's canonical pairing (Theorem \ref{thm Cass pair reformulation}). We see that the failure of the naive second adjointness, that is, of $(pind_I , pres_I^-)$ being an adjoint pair, is encoded by the non-isomorphicity of $cofib_I \to fib^-_I$. Nevertheless, we have a ``candidate for a unit" for an adjunction between $pind_I$ and $pres^-_I$, namely the composition $$ \textnormal{Id} \to (cofib_I \circ \calC_I) \circ pind_I \to pres^-_I \circ pind_I.$$ In other words, we have maps \begin{equation}\label{eq preadjunction} \textnormal{Hom} (pind_I (W) , V) \to \textnormal{Hom} (W , pres_I^- (V)) \end{equation} functorial in $W \in \calM^a (\frakg_I , K_I)$ and $V \in \calM^a (\frakg , K)$. One might call this the second ``preadjointness".

\subsection{Second adjointness for tempered admissible modules}

\begin{claim}\label{lem preadj isom}
	Let $V \in \calM^a (\frakg , K)_{\temp}$ and let $W \in \calM^a (\frakg_I , K_I)$ be such that\footnote{Here recall Notation \ref{notation}.} $W = W_{\langle 0 \rangle}$. Then the morphism (\ref{eq preadjunction}) is an isomorphism.
\end{claim}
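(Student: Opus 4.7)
By its construction, the map (\ref{eq preadjunction}) is the composition of the $(pind_I,cofib_I\circ\calC_I)$-adjunction isomorphism $\Hom(pind_I(W),V)\cong\Hom(W,cofib_I(\calC_I(V)))$ with the effect of $\Hom_{\frakg_I,K_I}(W,-)$ on the natural morphism
$$cofib_I(\calC_I(V))\to fib^-_I(\calC_I(V))\cong pres^-_I(V),$$
the last isomorphism being Corollary \ref{cor of C_I=J_I}. So the plan is to show that, for $V$ tempered and $W=W_{\langle 0\rangle}$, this latter morphism becomes an isomorphism after $\Hom_{\frakg_I,K_I}(W,-)$. Identifying $\calC_I\cong\calJ_I$ via Theorem \ref{thm Cass pair reformulation} and writing $U:=\calJ_I(V)$, it is (up to the common $\bbC_{\rho_{(I)}}$-twist, which I suppress) the composite
$$U^{\frakn_{(I)}}\hookrightarrow U\twoheadrightarrow U/\frakn_{(I)}^-U\cong V/\frakn_{(I)}^-V,$$
and I will work with it in this form.

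Since $W=W_{\langle 0\rangle}$, the module $\bbC_{-\rho_{(I)}}\otimes W$ is concentrated in $\fraka_{cent,I}$-weights whose real part equals $-\rho_{(I)}|_{\fraka_{cent,I}}$, so every $(\frakg_I,K_I)$-morphism from it into a $(\frakg_I,K_I)$-module $M$ factors through $M_{\langle-\rho_{(I)}\rangle}$. It therefore suffices to show that
$$U^{\frakn_{(I)}}_{\langle-\rho_{(I)}\rangle}\;\longrightarrow\;(U/\frakn_{(I)}^-U)_{\langle-\rho_{(I)}\rangle}$$
is an isomorphism of $(\frakg_I,K_I)$-modules, which I would establish via two sub-claims: \textbf{(a)} the projection $U_{\langle-\rho_{(I)}\rangle}\to(U/\frakn_{(I)}^-U)_{\langle-\rho_{(I)}\rangle}$ is an isomorphism, and \textbf{(b)} the inclusion induces an equality $U^{\frakn_{(I)}}_{\langle-\rho_{(I)}\rangle}=U_{\langle-\rho_{(I)}\rangle}$.

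For (a), the composition $U\twoheadrightarrow U/\frakn_{(I)}^-U\xrightarrow{\sim}V/\frakn_{(I)}^-V$ (the second arrow being the isomorphism of Corollary \ref{cor of C_I=J_I}) is the canonical projection from $\calJ_I(V)$ obtained by specializing to $k=1$ in the defining inverse limit; by the opposite version of Lemma \ref{lem tempered pres J same} (the same statement with $\frakn_{(I)}$ and $\frakn_{(I)}^-$ interchanged, valid by the same argument or by the Weyl-element symmetry of the setup), this composition restricts to an isomorphism on $\langle-\rho_{(I)}\rangle$-parts, and since its second factor is already an isomorphism, so is the first. For (b) --- the one genuinely substantive step, and the main obstacle --- I would reproduce the weight-cone estimate that drives Lemma \ref{lem tempered pres J same}: by temperedness of $V$ combined with transitivity of parabolic restriction and the centrality of $\fraka_{cent,I}$ in $\frakg_I$, every $\fraka_{cent,I}$-weight $\omega$ of $V/\frakn_{(I)}^-V$ satisfies $\Re(\omega)\leq_{\Sigma}-\rho_{(I)}|_{\fraka_{cent,I}}$, and this bound persists for $U=\calJ_I(V)$ because its $\fraka_{cent,I}$-weights differ from those of $V/\frakn_{(I)}^-V$ only by further shifts in the $\leq_{\Sigma}$-negative direction (along the cone generated by $R^+_{(I)}|_{\fraka_{cent,I}}$). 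For $x\in U_{\langle-\rho_{(I)}\rangle}$ and $\alpha\in R^+_{(I)}$, the weight of $E_\alpha x$ would then be $\omega+\alpha|_{\fraka_{cent,I}}$, whose real part is strictly above the bound (since $\alpha\notin R^+_I$ forces $\alpha|_{\fraka_{cent,I}}$ to be nonzero and $\geq_{\Sigma}0$), so $E_\alpha x=0$ and $x\in U^{\frakn_{(I)}}$. Everything else is formal unwinding combined with invocations of Theorem \ref{thm Cass pair reformulation} and Corollary \ref{cor of C_I=J_I}.
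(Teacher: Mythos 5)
Your proof is correct and follows essentially the same strategy as the paper's: reduce to showing that $\calC_I(V)^{\frakn_{(I)}}\hookrightarrow\calC_I(V)\twoheadrightarrow\calC_I(V)/\frakn_{(I)}^-\calC_I(V)$ becomes an isomorphism on $(\cdot)_{\langle-\rho_{(I)}\rangle}$-parts, decompose into the inclusion and the projection, and kill the offending weights using temperedness together with the identification $\calC_I\cong\calJ_I$. The only real difference is bookkeeping: you dispose of the projection step by citing the opposite version of Lemma \ref{lem tempered pres J same}, and you bound the $\fraka_{cent,I}$-weights of $\calJ_I(V)$ directly, whereas the paper runs a single argument by contradiction and transports the bad weight to $pres^-_{\emptyset}(V)$ via Casselman's submodule theorem (a step that is implicit in your appeal to transitivity) and Corollary \ref{cor of C_I=J_I}.
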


\begin{proof}
	It is enough to show that the map $$  \calC_I (V)^{\frakn_{(I)}} \to \calC_I (V) / \frakn_{(I)}^- \calC_I (V)$$ induces an isomorphism $$ (\calC_I (V)^{\frakn_{(I)}})_{\langle -\rho_{(I)} \rangle} \to (\calC_I (V) / \frakn_{(I)}^- \calC_I (V))_{\langle -\rho_{(I)}\rangle}$$ (recall Notation \ref{notation}). In fact, decomposing this map as $$ \calC_I (V)^{\frakn_{(I)}} \hookrightarrow \calC_I (V) \twoheadrightarrow \calC_I (V) / \frakn_{(I)}^- \calC_I (V),$$ we will see that these two maps separately become an isomorphism after applying $(\cdot)_{\langle -\rho_{(I)} \rangle}$.
	
	\medskip
	
	Let us argue by contradiction, assuming that one of these two isomorphisms fails. Then it is easy to see that there exists $\omega \in wt_{\fraka_{cent,I}} (\calC_I (V) / \frakn_{(I)}^- \calC_I (V)) $ such that $$\Re (\omega) \in \left( -\rho_{(I)} + \sum_{\alpha \in R^+_{(I)}} \bbZ_{\ge 0} \cdot \alpha \right) \setminus \{ -\rho_{(I)} \}$$ (here in the right hand side we understand restrictions to $\fraka_{cent,I}$). Then, by Casselman's submodule theorem, there will exist $\lambda \in wt_{\fraka} (\calC_I (V) / \frakn_{(\emptyset)}^- \calC_I (V))$ such that $\lambda|_{\fraka_{cent,I}} = \omega$. In other words, there will exist $\lambda^{\prime} \in wt_{\fraka} (pres^-_{(\emptyset)} (V))$ such that $$\Re (\lambda^{\prime})|_{\fraka_{cent,I}} \in \left( \sum_{\alpha \in R^+_{(I)}} \bbZ_{\ge 0} \cdot \alpha \right) \setminus \{ 0 \};$$ Here we used $$pres_{\emptyset}^- (V) \cong pres_{\emptyset}^- (pres_I^- V) \overset{\textnormal{cor. } \ref{cor of C_I=J_I}}{\cong} pres_{\emptyset}^- (fib_I^- (\calC_I (V))) \cong  fib^-_{\emptyset} (\calC_I (V))$$ (where some of the functors where not formally defined with their current domain, but their meaning is completely clear). But clearly then $\Re (\lambda^{\prime}) \leq_{\Sigma} 0$ \underline{does not} hold, contradicting $V$ being tempered.
\end{proof}

\begin{corollary}\label{cor second adjointness tempered}
	The preadjointness morphism (\ref{eq preadjunction}) is an isomorphism when $V$ and $W$ are tempered.
\end{corollary}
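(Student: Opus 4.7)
The plan is to deduce Corollary \ref{cor second adjointness tempered} as an immediate specialization of Claim \ref{lem preadj isom}. Claim \ref{lem preadj isom} already establishes that the preadjointness map (\ref{eq preadjunction}) is an isomorphism under the hypotheses that $V$ is tempered and $W = W_{\langle 0 \rangle}$. So the only thing left to verify is that a tempered $W \in \calM^a(\frakg_I , K_I)$ automatically satisfies $W = W_{\langle 0 \rangle}$ in the sense of Notation \ref{notation}.

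The first step is to invoke Remark \ref{rem tempered then central}, which tells us that for tempered $W$, every $\omega \in wt_{\fraka_{cent,I}}(W)$ has $\Re(\omega) = 0$. Unpacking Notation \ref{notation} with $\lambda = 0$, the submodule $W_{\langle 0 \rangle} \subset W$ is by definition the direct sum of all generalized $\fraka_{cent,I}$-weight spaces $W_{\fraka_{cent,I}, \omega}$ with $\Re(\omega) = 0|_{\fraka_{cent,I}} = 0$. Since every $\fraka_{cent,I}$-weight appearing in $W$ satisfies this condition, and since $W$ is $\fraka_{cent,I}$-locally finite by Lemma \ref{clm admissible 2}, the inclusion $W_{\langle 0 \rangle} \hookrightarrow W$ is in fact an equality. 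Hence the hypothesis of Claim \ref{lem preadj isom} is met, and the claim then yields that (\ref{eq preadjunction}) is an isomorphism, which is the desired statement.

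There is no real obstacle here: the analytical input (Casselman's canonical pairing, via Theorem \ref{thm Cass pair reformulation} and Corollary \ref{cor of C_I=J_I}) and the weight-theoretic contradiction argument were already carried out inside the proof of Claim \ref{lem preadj isom}. The only conceptual content of the corollary is the observation that the temperedness of $W$ is precisely what guarantees the central-character condition $W = W_{\langle 0 \rangle}$ under which the preadjointness upgrades to an honest adjointness, thereby explaining from the "preadjointness" viewpoint why tempered admissible modules are exactly the regime in which second adjointness holds.
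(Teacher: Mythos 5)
Your proof is correct and follows the paper's argument exactly: you specialize Claim \ref{lem preadj isom} by noting via Remark \ref{rem tempered then central} that a tempered $W$ satisfies $W = W_{\langle 0 \rangle}$. The only difference is that you spell out the unwinding of Notation \ref{notation} (and the $\fraka_{cent,I}$-local finiteness from Lemma \ref{clm admissible 2}), which the paper leaves implicit.
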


\begin{proof}
	This follows from Claim \ref{lem preadj isom} because, in view of Remark \ref{rem tempered then central}, if $W$ is tempered then $W = W_{\langle 0\rangle}$.
\end{proof}

Notice, finally, that Corollary \ref{cor second adjointness tempered} gives one more proof of Theorem \ref{thm second adj temp}.

\appendix

\section{Proofs of Lemmas}\label{sec proofs}

%%%%%%%%%%%%%%%%%%%%%%%%%%%%%%%%%%%%%%%%%%%%%%%%%%%%%%%%%%%%
\subsection{Proofs of the Lemmas characterizing admissibility}

\subsubsection{Proof of Lemma \ref{clm admissible 1}}\

	$(1) \implies (2):$ Since $V$ is finitely generated over $U(\frakg)$, a Theorem of Harish-Chandra (\cite[\S 3.4.1]{Wa2}) implies that each $V^{[\alpha]}$ is finitely generated over $Z(\frakg)$. Since $Z(\frakg)$ acts finitely on $V$, we deduce that each $V^{[\alpha]}$ is in fact finite-dimensional.

	$(2) \implies (3):$ Since $V$ is finitely generated over $U(\frakg)$, it is generated over $U(\frakg)$ by finitely many of the $V^{[\alpha]}$'s, so it is enough to show that $Z(\frakg)$ acts finitely on each $V^{]\alpha]}$. This, in turn, is clear since $Z(\frakg)$ preserves each $V^{[\alpha]}$ and each $V^{[\alpha]}$ is finite-dimensional by our assumption.
	
	$(3) \implies (4):$ This follows from the fact that there are, up to isomorphism, only finitely many irreducible $(\frakg , K)$-modules with a given infinitesimal character (for that fact, see \cite[\S 5.5.6]{Wa2}; Alternatively (and algebraically), it can be easily deduced from Beilinson-Bernstein localization theory). Indeed, that fact implies, since $V$ is $Z(\frakg)$-finite, that there exists a finite set $S$ of isomorphism classes of irreducible $(\frakg , K)$-modules such that the isomorphism class of every irreducible subquotient of $V$ lies in $S$. Then we can pick a finite set $T \subset \hat{K}$ such that for every irreducible $(\frakg , K)$-module $W$ of isomorphism class in $S$, one has $W^{[\alpha]} \neq 0$ for some $\alpha \in T$. Now, the functor from the category of $(\frakg , K)$-modules all of whose irreducible subquotients are of isomorphism class in $S$, to the category of vector spaces, given by $W \mapsto \oplus_{\alpha \in T} W^{[\alpha]}$, is exact, conservative (i.e. maps a non-zero object to a non-zero object), and the image of $V$ under it is of finite length (i.e. a finite-dimensional vector space). This implies that $V$ has finite length.

	$(4) \implies (1):$ One reduces immediately to the case when $V$ is irreducible. Then that $V$ is finitely generated over $U(\frakg)$ is clear. The center $Z(\frakg)$ acts finitely because it in fact acts by scalars, by Schur's Lemma (\cite[\S 0.5.2, \S 3.3.2]{Wa2}).

\subsubsection{Proof of Lemma \ref{clm admissible 2}}

Let us first assume that the $(\frakg , K_I N_{(I)})$-module $V$ is $Z(\frakg)$-finite and deduce some preliminary observations. One has the Harish-Chandra homomorphism $Z(\frakg) \to Z(\frakg_I)$, which is finite, and from its definition one sees that the action of $Z(\frakg)$ on $V^{\frakn_{(I)}}$ factors through this homomorphism. Therefore, we deduce that $V^{\frakn_{(I)}}$ is $Z(\frakg_I)$-finite, and hence $\fraka_{cent,I}$-finite. Considering, for $k \in \bbZ_{\ge 1}$, the exact sequence of $\frakg_I$-modules \begin{equation}\label{eq induction} 0 \to V^{ \frakn_{(I)}^k} \to V^{\frakn_{(I)}^{k+1}} \to \Hom_{\bbC} (\frakn_{(I)}^k , V^{\frakn_{(I)}} ) \end{equation} (where the last arrow is given by acting on $V^{\frakn_{(I)}^{k+1}}$ by $\frakn_{(I)}^k$), we by induction deduce that $V^{\frakn_{(I)}^k}$ are $\fraka_{cent,I}$-finite for all $k \in \bbZ_{\ge 1}$. In particular, $V$ is $\fraka_{cent,I}$-locally finite. Moreover, the above exact sequence shows that $$wt_{\fraka_{cent,I}} (V^{\frakn_{(I)}^{k+1}} / V^{\frakn_{(I)}^{k}}) \subset wt_{\fraka_{cent,I}} (V^{\frakn_{(I)}}) - k \cdot wt_{\fraka_{cent,I}} (\frakn_{(I)}).$$
	
	\medskip
	
	Now we will proceed with the steps.
	
	\medskip
	
	$(1) \implies (2):$ We remarked above that $V^{\frakn_{(I)}}$ is $Z(\frakg_I)$-finite. Since $V$ is finitely generated over $U(\frakg)$, there exists an $\fraka_{cent,I}$-stable finite-dimensional subspace $V_0 \subset V$ such that $V = U(\frakn_{(I)}^-) U(\frakg_I) V_0$. It is then clear by $\fraka_{cent,I}$-weight consideration that there exists $k \in \bbZ_{\ge 1}$ such that $V^{\frakn_{(I)}} \subset (\frakn_{(I)}^-)^k U(\frakg_I) V_0$. Therefore, as $(\frakn_{(I)}^-)^k U(\frakg_I) V_0$ is finitely generated over $U(\frakg_I)$, so is $V^{\frakn_{(I)}}$.
	
	$(2) \implies (3):$ One shows that $V^{\frakn_{(I)}^k}$ is admissible for any $k \in \bbZ_{\ge 1}$ by induction on $k$, using the exact sequence (\ref{eq induction}).

	$(3) \implies (4):$ Since $V = \cup_{k \in \bbZ_{\ge 1}} V^{\frakn_{(I)}^k}$, it is clear that $V$ is $\fraka_{cent,I}$-locally finite. It is enough now to show that for every $\lambda \in (\fraka_{cent,I})^*_{\bbC}$ there exists $k \in \bbZ_{\ge 1}$ such that $V_{\fraka_{cent,I},\lambda} \subset V^{\frakn_{(I)}^k}$. This is clear by $\fraka_{cent,I}$-weight consideration, from the last preliminary observation.

	$(4) \implies (5):$ Let $I \subset Z(\frakg)$ be an ideal of finite codimension that acts by zero on $V$. There exists, depending only on $I$, a finite set $S \subset (\fraka_{cent,I})^*_{\bbC}$ such that $wt_{\fraka_{cent,I}} (V^{\frakn_{(I)}}) \subset S$. Consider now the functor from the category of $(\frakg , K_I N_{(I)})$-modules on which $I$ acts by zero and which are $\fraka_{cent,I}$-locally finite, to the category of $(\frakg_I , K_I)$-modules, given by $W \mapsto \oplus_{\lambda \in S} W_{\fraka_{cent,I} , \lambda}$. This functor is exact, conservative, and the image of $V$ under it is of finite length. This implies that $V$ has finite length.
	
	$(5) \implies (1)$: One reduces immediately to the case when $V$ is irreducible. Then that $V$ is finitely generated over $U(\frakg)$ is clear. The center $Z(\frakg)$ acts finitely because it in fact acts by scalars, by Schur's Lemma  (\cite[\S 0.5.2]{Wa2}).

\subsubsection{Proof of Lemma \ref{clm admissible 3}}\

	$(1) \implies (2):$ This is clear, in view of the implication $(1) \implies (4)$ of Lemma \ref{clm admissible 2}, as well as the final preliminary observation in the proof of Lemma \ref{clm admissible 2}.

	$(2) \implies (1):$ The last condition makes it clear that $V$ is locally $\frakn_{(I)}$-torsion. Then the implication follows from implication $(4) \implies (1)$ of Lemma \ref{clm admissible 2}.

%%%%%%%%%%%%%%%%%%%%%%%%%%%%%%%%%%%%%%%%%%%%%%%%%%%%%%%%%%%%
\subsection{Proofs of the Lemmas about preservation of admissibility}

\subsubsection{Proof of Lemma \ref{lem pind pres adm}}

	We first address $pres_I$. Using the definition and finiteness of the Harish-Chandra homomorphism $h_I : Z(\frakg) \to Z(\frakg_I)$, it is clear that $pres_I$ sends $Z(\frakg)$-finite modules to $Z(\frakg_I)$-finite modules. More precisely, one sees that given $z \in Z(\frakg)$, applying the functor $pres_I$ to the morphism $V \to V$ given by multiplication by $z$, one obtains the morphism $pres_I (V) \to pres_I (V)$ given by multiplication by $h_I (z)$. Also, since $\frakg = \frakn_{(I)} + \frakg_I + \frakk$, it is clear that $pres_I$ sends modules which are finitely generated over $U(\frakg)$ to modules which are finitely generated over $U(\frakg_I)$.
	
	\medskip
	
	We now address $pind_I$, solely exploiting it being the right adjoint of $pres_I$. Let $W$ be a $(\frakg_I , K_I)$-module having finite-dimensional isotypic components. We will show that $pind_I (W)$ also has finite-dimensional isotypic components. Let $E$ be a finite-dimensional $K$-module. Denote $V_E := U(\frakg) \otimes_{U(\frakk)} E$. Then, for a $(\frakg_I , K_I)$-module $W$, we have $$ \Hom_K (E , pind_I (W)) \cong \Hom_{(\frakg , K)} (V_E , pind_I (W)) \cong \Hom_{(\frakg_I , K_I)} (pres_I (V_E) , W).$$ Since $V_E$ is finitely generated over $U(\frakg)$, by what we have seen $pres_I (V_E)$ is finitely generated over $U(\frakg_I)$. As $W$ has finite-dimensional isotypic components, it is clear that the last $\Hom$-space is finite-dimensional, and thus so is the first, showing that $pind_I (W)$ has finite-dimensional isotypic components.
	
	\medskip
	
	Finally, let us show that if $W$ is a $Z(\frakg_I)$-finite $(\frakg_I , K_I)$-module, then $pind_I (W)$ is $Z(\frakg)$-finite. More precisely, we will show that given $z \in Z(\frakg)$, the morphism $pind_I (W) \to pind_I (W)$ given by multiplication by $z$ is equal to the morphism gotten by applying $pind_I$ to the morphism $W \to W$ given by multiplication by $h_I (z)$. For this, it is enough to show that for every $(\frakg , K)$-module $V$, two endomorphisms of $$ \Hom_{\frakg , K} (V , pind_I (W)),$$ the first obtained via the multiplication by $z$ on $pind_I (W)$, and the second obtained via the multiplication by $h_I (z)$ on $W$ - coincide. We can interpret the first endomorphism as given via the multiplication by $z$ on $V$, and identifying $$ \Hom_{\frakg , K} (V , pind_I (W)) \cong \Hom_{\frakg_I , K_I} (pres_I (V) , W),$$ we further interpret it, in view of what was said about $pres_I$ above, as given via the multiplication by $h_I (z)$ on $pres_I (V)$. On the other hand, the second endomorphism gets interpreted on the latter $\Hom$-space still as given via the multiplication by $h_I (z)$ on $W$. These interpretations show that our two endomorphisms indeed coincide.

\subsubsection{Proof of Lemma \ref{lem cofib delta fib adm}}

	That $cofib_I$ preserves admissibility is the contents of the implication $(1) \implies (2)$ of Lemma \ref{clm admissible 2}.
	
	\medskip
	
	Let $W$ be a $(\frakg_I , K_I)$ module. Since $\Delta_I (W)$ is generated over $U(\frakg)$ by a (twist of) $W$, it is clear that $\Delta_I (W)$ is finitely generated over $U(\frakg)$ if $W$ is finitely generated over $U(\frakg_I)$, and, using the Harish-Chandra homomorphism $Z(\frakg) \to Z(\frakg_I)$, that $\Delta_I (W)$ is $Z(\frakg)$-finite if $W$ is $Z(\frakg_I)$-finite.
	
	\medskip
	
	That $fib^-_I$ sends $Z(\frakg)$-finite modules to $Z(\frakg_I)$-finite modules is shown exactly as the corresponding claim for $pres_I$. That $fib^-_I$ sends modules which are finitely generated over $U(\frakg)$ to modules which are finitely generated over $U(\frakg_I)$ is again shown similarly to the corresponding claim for $pres_I$, where we use now $\frakg = \frakn_{(I)}^- + \frakg_I + \frakn_{(I)}$.

\subsubsection{Proof of Lemma \ref{lem B C adm}}

	Let us first notice that $\calB_I$ sends $Z(\frakg)$-finite modules to $Z(\frakg)$-finite modules. More precisely, one sees that given $z \in Z(\frakg)$, applying the functor $\calB_I$ to the morphism $W \to W$ given by multiplication by $z$, one obtains the morphism $\calB_I (W) \to \calB_I (W)$ given by multiplication by $z$. This is clear from the defining formula for $\calB_I$.
	
	\medskip
	
	To show that $\calB_I$ preserves admissibility, let us fix an ideal of finite codimension $J \subset Z(\frakg)$ and consider an admissible $(\frakg , K_I N_{(I)})$-module $W$ on which $J$ acts by zero. Depending only on $J$, there exists a finite set $S \subset (\fraka_{cent,I})^*_{\bbC}$ such that $wt_{\fraka_{cent,I}} (W^{\frakn_{(I)}}) \subset S$. We will prove the admissibility of $\calB_I (W)$ by induction on the number of elements in $wt_{\fraka_{cent,I}} (W) \cap S$ (if this number is zero, then $W = 0$ and the claim is clear). Notice that the counit map $\Delta_I (cofib_I (W)) \to W$ is an isomorphism on a generalized eigenspace $(\cdot)_{\fraka_{cent,I} , \lambda}$ whenever $\lambda \in wt_{\fraka_{cent,I}} (W) \cap S$ is maximal w.r.t. the partial order given by $\lambda_1 \leq \lambda_2$ if $\lambda_2 - \lambda_1 \in \sum_{\alpha \in wt_{\fraka_{cent,I}}(\frakn_{(I)})} \bbZ_{\ge 0} \cdot \alpha$. Therefore, considering the exact sequence $$ \Delta_I (cofib_I (W)) \to W \to C \to 0$$ (where $C$ simply denotes the cokernel of the counit map), and applying $\calB_I$ to it, we reduce ourselves to showing that $\calB_I (\Delta_I (cofib_I (W)))$ and $\calB_I (C)$ are admissible. But $\calB_I (\Delta_I (cofib_I (W))) \cong pind_I (cofib_I (W))$ (here we used Proposition \ref{prop rel B Delta pind}), which is admissible since $W$ is (as we have already shown that $pind_I$ and $cofib_I$ preserve admissibility), and we are thus reduced to showing that $\calB_I (C)$ is admissible. Notice that $C$ is again an admissible $(\frakg , K_I N_{(I)})$-module on which $J$ acts by zero, and that $wt_{\fraka_{cent,I}} (C) \cap S$ is contained properly in $wt_{\fraka_{cent,I}} (W) \cap S$, as it lacks the maximal elements. Therefore, by the induction hypothesis, $\calB_I (C)$ is admissible.
	
	\medskip
	
	We now address $\calC_I$, solely exploiting it being the right adjoint of $\calB_I$. We first show that $\calC_I$ sends $Z(\frakg)$-finite modules to $Z(\frakg)$-finite modules. More precisely, given a $(\frakg , K)$-module $V$ and $z \in \calZ (\frakg)$, the morphism $\calC_I (V) \to \calC_I (V)$ given by multiplication by $z$ is equal to the morphism gotten by applying $\calC_I$ to the morphism $V \to V$ given by multiplication by $z$. In fact, one deduces this from the corresponding fact for $\calB_I$ noticed above, in complete analogy with the parallel treatment for $pind_I$ in the last paragraph of the proof of Lemma \ref{lem pind pres adm}, so we skip this.
	
	\medskip
	
	Finally, we will show that given an admissible $(\frakg , K)$-module $V$, the $(\frakg , K_I N_{(I)})$-module $\calC_I (V)$ is also admissible. We just mentioned that $\calC_I (V)$ is $Z(\frakg)$-finite, therefore by Lemma \ref{clm admissible 2} it is enough to show that $cofib_I (\calC_I (V))$ is an admissible $(\frakg_I , K_I)$-module. Moreover, again since $\calC_I (V)$ is $Z(\frakg)$-finite, we already know that $cofib_I (\calC_I (V))$ is $Z(\frakg_I)$-finite (see the preliminary observations in the proof of \ref{clm admissible 2}), and it is therefore enough to see that $cofib_I (\calC_I (V))$ has finite-dimensional isotypic components. Let $E$ be a finite-dimensional $K_I$-module. Denote $W_E := U(\frakg_I) \otimes_{U(\frakk_I)} E$ (a $(\frakg_I , K_I)$-module) and denote by $J \subset Z(\frakg_I)$ an ideal of finite codimension which acts on $cofib_I (\calC_I (V))$ by zero. Then $W_E / J W_E$ is an admissible $(\frakg_I , K_I)$-module, and we have $$ \Hom_{K_I} (E , cofib_I (\calC_I (V))) \cong \Hom_{(\frakg_I , K_I)} (W_E / J W_E , cofib_I (\calC_I (V))) \cong $$ $$ \cong \Hom_{(\frakg , K)} (\calB_I (\Delta_I (W_E / J W_E)) , V) \cong \Hom_{(\frakg , K)} (pind_I (W_E / J W_E) , V).$$ Since both $W_E / J W_E$ and $V$ are admissible, the last $\Hom$-space is finite dimensional, and therefore so is the first, and hence the desired conclusion.

\subsection{Proof of Lemma \ref{lem auto cont}}

	Let us fix $$ \ell \in U^{* , \Delta \frakg_I + (\frakn_{(I)}^- \oplus \frakn_{(I)})}.$$ Using the Casselman-Wallach Theorem (Theorem \ref{Casselman-Wallach}) and standard Frobenius reciprocity, one has an identification $$ \Hom_{\frakg \oplus \frakg , K \times K} (U , C^{\infty}(Y_I)) \cong \Hom_{G(\bbR) \times G(\bbR)} (U^{\infty} , C^{\infty} (Y_I)) \cong (U^{\infty})^{\star , \Delta \frakg + (\frakn_{(I)}^- \oplus \frakn_{(I)})},$$ where $(\cdot)^{\star}$ denotes the space of continuous functionals. Therefore, we see that we simply need to show that $\ell$ extends to a continuous functional on $U^{\infty}$.

\medskip

We consider the parabolic subgroup $G_I N_{(I)}^- \times G_I N_{(I)}$ in $G \times G$ (defined over $\bbR$), and denote (just for this proof) by $$pres : \calM^a (\frakg \oplus \frakg , K \times K) \rightleftarrows \calM^a (\frakg_I \oplus \frakg_I , K_I \times K_I) : pind$$ the corresponding unnormalized parabolic restriction and induction functors. We want to see first that a continuous dashed arrow making the following diagram commutative, exists: $$ \xymatrix{ U^{\infty} \ar@{-->}[r] & pres (U)^{\infty} \\ U \ar[u] \ar[r] & pres(U) \ar[u]}.$$ One has the unit map $U \to pind (pres (U))$, and corresponding to it the map of representations $U^{\infty} \to pind (pres (U))^{\infty}$. It is well-known and not hard to establish, for $W \in \calM^a (\frakg_I \oplus \frakg_I , K_I \times K_I)$, an isomorphism $pind (W)^{\infty} \cong \textnormal{\textbf{pind}} (W^{\infty})$, where $\textnormal{\textbf{pind}} (\cdot)$ is the ``usual" parabolic induction construction, consisting of smooth functions on $G(\bbR) \times G (\bbR)$ which satisfy a transformation rule, etc. We clearly have a map $\textnormal{\textbf{pind}} (W^{\infty}) \to W^{\infty}$ given by evaulating at $1$, which gives us the composition $$ U^{\infty} \to pind(pres(U))^{\infty} \cong \textnormal{\textbf{pind}} (pres(U)^{\infty}) \to pres(U)^{\infty},$$ which is the desired arrow.

\medskip

The functional $\ell$ factors as the projection $U \to pres (U)$ followed by a functional $\ell^{\prime} \in pres(U)^{*,\Delta \frakg}$. We therefore see, using the commutative diagram above, that it is enough to show that $\ell^{\prime}$ extends to a continuous functional on $pres(U)^{\infty}$. This, in its turn, is a well-known ``automatic continuity" for symmetric subgroups (\cite[Th\'eor\`eme 1]{BaDe}).

\end{document}